\newtheorem{prop}{Proposition}[section]
\newtheorem{lemma}[prop]{Lemma}
\newtheorem{theo}[prop]{Theorem}
\newtheorem{coroll}[prop]{Corollary}
{\theorembodyfont{\rmfamily}
 \newtheorem{remark}[prop]{Remark}
 \newtheorem{definition}[prop]{Definition}
 \newtheorem{example}[prop]{Example}
}
\newenvironment{proof}[1][Proof]{%
  \begin{list}{}{%
      \settowidth{\labelwidth}{\textit{#1.}}%
      \setlength{\itemindent}{\labelwidth}%
      \addtolength{\itemindent}{\labelsep}%
      \setlength{\leftmargin}{0pt}
      \setlength{\parsep}{0pt}
      \setlength{\listparindent}{\parindent}
    }\item[\textit{#1.}]}
  {\hspace*{\fill}$\Box$ \end{list}}
\newenvironment{proof*}[1][Proof]{%
  \begin{list}{}{%
      \settowidth{\labelwidth}{\textit{#1.}}%
      \setlength{\itemindent}{\labelwidth}%
      \addtolength{\itemindent}{\labelsep}%
      \setlength{\leftmargin}{0pt}
    }\item[\textit{#1.}]}
  {\end{list}}
\newenvironment{definition*}{\begin{definition}}
  {\hspace*{\fill}$\lrcorner$ \end{definition}}
\newenvironment{remark*}{\begin{emark}}
  {\hspace*{\fill}$\lrcorner$ \end{remark}}
\newenvironment{example*}{\begin{example}}
  {\hspace*{\fill}$\lrcorner$ \end{example}}
\newcommand{\braket}[2]{[#1|#2]}
\newcommand{\Braket}[2]{\Bigl[#1\Big|#2\Bigr]}
\newcommand{\Jorth}{{[\perp]}}
\newcommand{\pmat}[1]{\begin{pmatrix}#1\end{pmatrix}}
\newcommand{\set}[2]{\left\{#1\,\middle|\,#2\right\}}
\newcommand{\bigset}[2]{\bigl\{#1\,\big|\,#2\bigr\}}
\newcommand{\iprod}[2]{(#1|#2)}
\newcommand{\iprods}[3]{\iprod{#1}{#2}_{#3,-#3}}
\newcommand{\iprodsr}[3]{\iprod{#1}{#2}_{-#3,#3}}
\newcommand{\ads}{^{(*)}}
\newcommand{\smallmat}[1]{\left(\begin{smallmatrix}#1\end{smallmatrix}\right)}
\DeclareMathOperator{\Real}{Re}
\newcommand{\proj}{\mathrm{pr}}
\newcommand{\R}{\mathbbm{R}}
\newcommand{\C}{\mathbbm{C}}
\newcommand{\mdef}{\mathcal{D}}
\newcommand{\range}{\mathcal{R}}
\newcommand{\eps}{\varepsilon}
\begin{document}

\title{Dichotomous Hamiltonians and
  Riccati equations for systems with unbounded
  control and observation operators
}
\author{Christian Wyss\footnote{
    University of Wuppertal,
    School of
    Mathematics and Natural Sciences,
    Gau\ss stra\ss e 20,
    D-42097 Wuppertal,
    Germany,
    \texttt{wyss@math.uni-wuppertal.de}}}
\date{\today}
\maketitle

\begin{center}
  \parbox{11cm}{\small \textbf{Abstract.}  
    The control algebraic Riccati
    equation is studied for a class of systems with
    unbounded control and observation operators.
    Using a dichotomy property of the associated
    Hamiltonian operator matrix, two
    invariant graph subspaces are constructed which yield
    a nonnegative and a nonpositive solution of the Riccati equation.
    The boundedness of the nonnegative solution and the
    exponential stability of the associated feedback system is proved
    for the case that the
    generator of the system has a compact resolvent.
  } \\[1.5ex]
  \parbox{11cm}{\small\textbf{Keywords.} algebraic Riccati equation, Hamiltonian matrix,
    dichotomous operator, invariant subspace, graph subspace.
  } \\[1.5ex]
  \parbox{11cm}{\small\textbf{Mathematics Subject Classification.}
    Primary 47N70; Secondary 47A15, 47A62, 47B44.
  }
\end{center}

\section{Introduction}

In systems theory,
the algebraic Riccati equation
\begin{equation}\label{eq:care}
  A^*X+XA-XBB^*X+C^*C=0
\end{equation}
plays an important role in many areas.
One example is 
the problem of linear quadratic optimal
control where
a selfadjoint nonnegative solution
is of particular interest.
For infinite-dimensional systems
such a solution is often constructed in parallel
to a solution of the optimal control problem.
This has been done for different kinds of linear systems,
e.g.\ in
\cite{curtain-zwart,opmeer-curtain04,pritchard-salamon,staffans05,weiss-weiss}.

On the other hand, the Riccati equation is closely connected to the
so-called Hamiltonian operator matrix
\begin{equation}
  T=\pmat{A&-BB^*\\-C^*C&-A^*}.
\end{equation}
An operator $X$ is a solution of \eqref{eq:care} if and only if its
associated graph
$\range\smallmat{I\\X}$
is an invariant subspace of the Hamiltonian.
In the finite-dimensional case, this connection has lead to a complete characterisation
of all solutions of the Riccati equation,
see e.g.\ \cite{bittanti-laub-willems,lancaster-rodman} and the references therein.
For infinite-dimensional linear systems, this ``Hamiltonian approach'' to the
Riccati equation has been studied under different
boundedness assumptions on the control and observation operators $B,C$
and for different classes of Hamiltonians concerning their spectral
properties.
For the case that $B,C$ are bounded and have finite rank,
a characterisation of all nonnegative solutions of \eqref{eq:care}
has been obtained in \cite{callier-dumortier-winkin}.
In \cite{kuiper-zwart}
the class of Hamiltonians possessing a Riesz basis of eigenvectors
was considered for systems with bounded $B$ and $C$,
and characterisations of solutions and their properties
were obtained.
In \cite{wyss-rinvsubham,wyss-unbctrlham} this was extended to unbounded $B,C$
and to more general kinds of Riesz bases.
The Riesz basis setting typically leads to the existence of an
infinite number of solutions of \eqref{eq:care}.

However, the existence of a Riesz basis of eigenvectors of $T$ is a strong assumption
and might be to restrictive.
An often weaker condition is that $T$ is \emph{dichotomous}. This means
that the spectrum of $T$ does not contain points
in a strip around the imaginary axis
and that there exist invariant subspaces corresponding to the parts of the
spectrum in the left and right half-plane, respectively.
Dichotomous Hamiltonians with bounded $B$ and $C$ were considered
in \cite{bubak-mee-ran,langer-ran-rotten} and the existence of a nonnegative and a nonpositive
solution of \eqref{eq:care} was shown.
This result was extended in \cite{tretter-wyss} to a setting where
$BB^*$ and $C^*C$ are unbounded closed
operators acting on the state space.
This however excludes PDE systems with control or observation
on the boundary. 
In this article we will construct a nonnegative and a nonpositive solution of \eqref{eq:care}
for a class of dichotomous Hamiltonians which allows for systems
with boundary control and observation.

In the infinite-dimensional setting the Hamiltonian approach typically leads to
unbounded solutions of the Riccati equation in the first instance, see
\cite{langer-ran-rotten,tretter-wyss,wyss-rinvsubham,wyss-unbctrlham}.
This means that the boundedness of solutions is an additional question now.
Moreover, due to the unboundedness of the operators in \eqref{eq:care},
additional care has to be taken to exactly determine the domain
on which the Riccati equation actually holds.

Our setting is as follows:
Let $H,U,Y$ be Hilbert spaces.
Let $A$ be a \emph{quasi-sectorial} operator on $H$, i.e.,
$A-\mu$ is sectorial for some $\mu\geq0$. This means that $A$ may have spectrum on
and to the right of the imaginary axis up to the line $\Real z=\mu$
and that $A$ generates an analytic semigroup.
The operator $A$ determines two scales of Hilbert spaces
$\{H_s\}$ and $\{H_s\ads\}$,
\[H_s\subset H\subset H_{-s},\qquad
H_s\ads\subset H\subset H_{-s}\ads,\qquad s>0,\]
whose norms are given by $\|x\|_s=\|(I+AA^*)^\frac{s}{2}x\|$ and
$\|x\|_s\ads=\|(I+A^*A)^\frac{s}{2}x\|$.
If $A$ is a normal operator, then both scales coincide with the usual
fractional power spaces, $H_s=H_s\ads=\mdef(|A|^s)$.
In general however, the two scales are different and must be distinguished.
Our assumption on the control and observation operators is now
\[B\in L(U,H_{-r}),\qquad C\in L(H_s\ads,Y)\]
where $r,s\geq0$ and $r+s<1$.
Examples of systems with boundary control and observation
which fit into this setting may be found e.g.\ in
\cite{tucsnak-weiss,wyss-unbctrlham}.
The adjoints of $B$ and $C$ are  defined
using a duality relation in each of the scales of Hilbert spaces,
which is induced by the
inner product $\iprod{\cdot}{\cdot}$ on $H$:
the mapping $y\mapsto\iprod{\cdot}{y}$, $y\in H$, extends
by continuity to  isometric isomorphisms $H_{-r}\to (H_r)'$ and
$H_{-s}\ads\to(H_s\ads)'$.
This is also referred to as duality with respect to the pivot space $H$.
With this duality we obtain
\[BB^*:H_r\to H_{-r},\qquad C^*C:H_s\ads\to H_{-s}\ads.\]
The Hamiltonian is now considered as an unbounded operator
\[T_0=\pmat{A&-BB^*\\-C^*C&-A^*}\]
acting on
$V_0=H_{-r}\times H_{-s}\ads$, with  appropriate extensions of the
operators $A$ and $A^*$.
We prove that if
\begin{enumerate}
\item $\sigma(A)\cap i\R=\varnothing$, or
\item $A$ has a compact resolvent
  and
  \[\ker(A-it)\cap\ker C=\ker(A^*+it)\cap\ker B^*=\{0\},\qquad t\in\R,\]
\end{enumerate}
then $T_0$ is dichotomous and hence there is a decomposition $V_0=V_{0+}\oplus V_{0-}$
into $T_0$-invariant subspaces such that $\sigma(T_0|_{V_{0\pm}})\subset\C_\pm$,
i.e., $V_{0-}$ corresponds to the spectrum in the open left half-plane $\C_-$
and $V_{0+}$ to the one in the open right half-plane $\C_+$.
For the rest of this introduction we assume that (a) or (b) is satisfied.

We derive that $V_{0\pm}$ are graph subspaces in two different situations.
In the first we assume that 
\begin{equation}\label{eq:intro-ac}
  \bigcap_{\lambda\in i\R\cap\varrho(A^*)}\ker B^*(A^*-\lambda)^{-1}=\{0\}.
\end{equation}
Then $V_{0\pm}$ are graphs, $V_{0\pm}=\range\smallmat{I\\X_{0\pm}}$, of closed,
possibly unbounded operators
$X_{0\pm}:\mdef(X_{0\pm})\subset H_{-r}\to H_{-s}\ads$.
If in addition
\begin{equation}\label{eq:intro-ao}
  \bigcap_{\lambda\in i\R\cap\varrho(A)}\ker C(A-\lambda)^{-1}=\{0\},
\end{equation}
then $X_{0\pm}$ are also injective and hence $V_{0\pm}=\range\smallmat{Y_{0\pm}\\I}$
with $Y_{0\pm}=X_{0\pm}^{-1}$.
The conditions \eqref{eq:intro-ac} and \eqref{eq:intro-ao} were also used
in \cite{langer-ran-rotten,tretter-wyss,wyss-rinvsubham,wyss-unbctrlham},
sometimes in different but equivalent forms;
\eqref{eq:intro-ac} amounts to the approximate controllability,
\eqref{eq:intro-ao} to the approximate observability of the system $(A,B,C)$,
see \cite{langer-ran-rotten,wyss-unbctrlham}.
In the second situation, we assume that $\sigma(A)\subset\C_-$.
Hence
the semigroup generated by $A$ is exponentially stable. In this case we obtain
$V_{0-}=\range\smallmat{I\\X_{0-}}$ and $V_{0+}=\range\smallmat{Y_{0+}\\I}$
where, again, $X_{0-}$ and $Y_{0+}$ are closed and possibly unbounded, but not
necessarily injective.

Under the additional assumption that $A$ has a compact resolvent,
we can show that $X_{0-}$ and $Y_{0+}$ are bounded.
More precisely, if $A$ has a compact resolvent
and either
\eqref{eq:intro-ac} and \eqref{eq:intro-ao} or $\sigma(A)\subset\C_-$ hold, then
$X_{0-}\in L(H_{-r},H_{-s}\ads)$, $Y_{0+}\in L(H_{-s}\ads,H_{-r})$.
In this case we also obtain that $X_{0-}$ is a solution of the Riccati equation
on the domain $H_{1-r}\ads$ 
and that the operator $A-BB^*X_{0-}$ associated with the closed loop system
generates an exponentially stable semigroup on $H_{-r}$.

In \cite{langer-ran-rotten,tretter-wyss} the two  solutions
of the Riccati equation are selfadjoint operators on $H$, one being nonnegative,
the other nonpositive.
Here the situation is more involved.
While $X_{0\pm}$ can be restricted to symmetric operators on $H$ that are nonnegative
and nonpositive, respectively,
selfadjoint restrictions need not exist in general.
More specifically,
$X_{0\pm}$ admit restrictions to closed operators $X_{1\pm}$ from $H_s\ads$
to $H_r$ such that
\[X_{1\pm}\subset X_{1\pm}^*=X_{0\pm},\]
where the adjoint is computed with respect to the duality in
the scales $\{H_s\}$ and $\{H_s\ads\}$.
In particular, $X_{1\pm}$ is symmetric when considered as an operator
on $H$. If  $X_{M\pm}$ is the closure of $X_{1\pm}$ as an
operator on $H$ and $X_\pm$ is the part of $X_{0\pm}$ in $H$,
then
\[X_{1\pm}\subset X_{M\pm}\subset X_{M\pm}^*=X_\pm\subset X_{0\pm},\]
$X_{M-}$ is symmetric and nonnegative, $X_{M+}$ is symmetric and nonpositive.
We can also consider the restriction of 
the Hamiltonian $T_0$ to an operator $T$ on $V=H\times H$.
Then $T$ has invariant subspaces
$V_\pm$ corresponding to the spectrum in $\C_\pm$
and $V_\pm$ is in fact the graph of $X_\pm$.
Note here that $T$ will in general not be dichotomous
since
$V_+\oplus V_-$ will only be dense in $V$.
Also note that the
above statements hold for $X_{0-}$ and its restrictions
provided that $V_{0-}=\range\smallmat{I\\X_{0-}}$,
i.e., if \eqref{eq:intro-ac} or $\sigma(A)\subset\C_-$ holds.
Likewise the statements for the restrictions of $X_{0+}$ hold
if $V_{0+}=\range\smallmat{I\\X_{0+}}$,
i.e., if \eqref{eq:intro-ac} is true.

Finally assume that $\max\{r,s\}<\frac12$.
In this case $T$ is in fact dichotomous and we obtain $X_{M\pm}=X_\pm$.
Hence
$X_-$ is selfadjoint nonnegative, $X_+$ is selfadjoint nonpositive.
If in addition $A$ has a compact resolvent, then $X_-$ is also bounded
and a restriction of $A-BB^*X_{0-}$ generates
an exponentially stable semigroup on $H$.

This article is organised as follows:
In section~\ref{sec:prel} we collect some general operator theoretic statements,
in particular about dichotomous, sectorial and bisectorial operators.
The scales of Hilbert spaces are defined in section~\ref{sec:scale}
and their basic properties are recalled, in particular concerning interpolation.
Section~\ref{sec:ham} contains the definition of the Hamiltonian and basic facts
about its spectrum.
Moreover we describe the symmetry of the Hamiltonian with respect to
two indefinite inner products, which will be essential in sections~\ref{sec:angular}
and~\ref{sec:sym}.
In section~\ref{sec:bisecham} we prove the bisectoriality and dichotomy of $T_0$
and $T$ using interpolation in the Hilbert scales.
The graph subspace properties of $V_{0\pm}$ and $V_\pm$ are derived in
section~\ref{sec:angular} as well as the boundedness of $X_{0-}$ and $Y_{0+}$.
The symmetry relations between $X_{0\pm}$ and its restrictions are the subject
of section~\ref{sec:sym}, while the Riccati equation and the closed loop operator
are studied in section~\ref{sec:ricc}.

A few remarks on the notation:
We denote the domain of a linear operator $T$ by $\mdef(T)$, its range by $\range(T)$,
the spectrum by $\sigma(T)$ and the resolvent set by $\varrho(T)$.
The space of all bounded linear operators mapping a Banach space $V$ to another
Banach space $W$ is denoted by $L(V,W)$.
For the operator norm of $T\in L(V,W)$ we occasionally write
$\|T\|_{V\to W}$ to make the dependence on the spaces $V$ and $W$ explicit.

\section{Preliminaries}\label{sec:prel}

In this section,
we summarise some concepts and results for linear operators on Banach spaces.
Unless stated explicitly, linear operators are
not assumed to be densely defined.

\begin{lemma}\label{lem:op1}
  Let $T$ be a linear operator on a Banach space $V$. Let $W$ be another Banach space
  such that $\mdef(T)\subset W\subset V$ and such that the imbedding $W\hookrightarrow V$
  is continuous.
  Let $\lambda\in\varrho(T)$.
  \begin{enumerate}
  \item  The resolvent $(T-\lambda)^{-1}$ yields a bounded operator from $V$
    into $W$, i.e.,
    $(T-\lambda)^{-1}\in L(V,W)$.
  \item If the imbedding $W\hookrightarrow V$ is compact,
    then the resolvent is compact as an operator from $V$ into $V$, i.e.,
    $(T-\lambda)^{-1}:V\to V$ is compact.
  \end{enumerate}
\end{lemma}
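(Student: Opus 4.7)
The plan is to deduce both parts from the closed graph theorem together with the standing assumption that $\lambda\in\varrho(T)$ already gives $(T-\lambda)^{-1}\in L(V,V)$.

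For part~(a), I would note that the resolvent $(T-\lambda)^{-1}$ maps $V$ into $\mdef(T)\subset W$, so it is well-defined as a map $V\to W$. To check that it has closed graph as such a map, I would take a sequence $v_n\to v$ in $V$ with $(T-\lambda)^{-1}v_n\to w$ in $W$. The continuity of the imbedding $W\hookrightarrow V$ then gives $(T-\lambda)^{-1}v_n\to w$ in $V$, while the boundedness of $(T-\lambda)^{-1}$ on $V$ gives $(T-\lambda)^{-1}v_n\to (T-\lambda)^{-1}v$ in $V$. Comparing yields $w=(T-\lambda)^{-1}v$, so the graph is closed in $V\times W$. Since $V$ and $W$ are Banach spaces, the closed graph theorem supplies $(T-\lambda)^{-1}\in L(V,W)$.

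For part~(b), I would factor the resolvent $V\to V$ through $W$ as
\[
V\xrightarrow{(T-\lambda)^{-1}} W\hookrightarrow V,
\]
where the first arrow is bounded by part~(a) and the second is compact by hypothesis. The composition of a bounded and a compact operator is compact, so $(T-\lambda)^{-1}:V\to V$ is compact.

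There is no real obstacle here; the only subtle point is to recognise that boundedness on $V$ is not automatic from the definition of $\varrho(T)$ in the non-densely-defined setting announced at the start of the section, but since $\lambda\in\varrho(T)$ is taken to mean that $T-\lambda$ is a bijection onto $V$ with bounded inverse on $V$, the closed graph argument above goes through without further hypotheses.
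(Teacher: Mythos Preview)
Your proof is correct and follows essentially the same approach as the paper: the closed graph theorem for part~(a), using the continuity of the imbedding and the $V$-boundedness of the resolvent, and then factoring through $W$ for part~(b). The paper's argument is virtually identical to yours.
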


\begin{proof}
  \begin{enumerate}
  \item
    The assumption $\mdef(T)\subset W$ implies that $(T-\lambda)^{-1}$ maps $V$
    into $W$. The operator $(T-\lambda)^{-1}:V\to W$ is thus well defined, and
    by the closed graph theorem it suffices to show that it is closed.
    Let $x_n\in V$ with $x_n\to x$ in $V$ and $(T-\lambda)^{-1}x_n\to y$ in $W$
    as $n\to\infty$.
    Then $(T-\lambda)^{-1}x_n\to y$ in $V$ by the continuity of the imbedding
    $W\hookrightarrow V$, and also $(T-\lambda)^{-1}x_n\to (T-\lambda)^{-1}x$ in $V$
    since the resolvent is a bounded operator on $V$.
    Consequently $(T-\lambda)^{-1}x=y$ and hence
    $(T-\lambda)^{-1}:V\to W$ is closed.
  \item
    This follows immediately from (a) by composing the bounded operator
    $(T-\lambda)^{-1}:V\to W$ with the
    compact imbedding $W\hookrightarrow V$.
  \end{enumerate}
\end{proof}

\begin{lemma}\label{lem:op2}
  Let $T_0$ be a linear operator on
  a Banach space $V_0$.
  Let $V$ be another Banach space satisfying
  $\mdef(T_0)\subset V\subset V_0$ with continuous imbedding $V\hookrightarrow V_0$.
  Let $T$ be the part of $T_0$ in $V$, i.e., $T$ is the restriction of $T_0$ to the
  domain
  \[\mdef(T)=\set{x\in\mdef(T_0)}{T_0x\in V},\]
  considered as an operator $T:\mdef(T)\subset V\to V$.
  Then
  \begin{enumerate}
  \item $\sigma_p(T)=\sigma_p(T_0)$,
  \item $\varrho(T_0)\subset \varrho(T)$ and
    $(T-\lambda)^{-1}=(T_0-\lambda)^{-1}|_V$ for all $\lambda\in\varrho(T_0)$,
  \item if $\mdef(T_0)$ is dense in $V$, $V$ is dense in $V_0$ and $\varrho(T_0)\neq\varnothing$,
    then $T$ is densely defined.
  \end{enumerate}
\end{lemma}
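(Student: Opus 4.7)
The plan is to handle the three parts in sequence, with the main lever being Lemma~\ref{lem:op1}(a) applied with $W=V$: that lemma gives $(T_0-\lambda)^{-1}\in L(V_0,V)$ for every $\lambda\in\varrho(T_0)$, and almost everything else follows from this single fact together with the definition of the part $T$ of $T_0$ in $V$.

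For part~(a), I would argue both inclusions directly. If $T_0x=\lambda x$ with $x\neq0$, then $x\in\mdef(T_0)\subset V$ and $T_0x=\lambda x\in V$, so $x\in\mdef(T)$ and $Tx=\lambda x$, giving $\sigma_p(T_0)\subset\sigma_p(T)$. Conversely, if $Tx=\lambda x$ with $x\neq0$, then $x\in\mdef(T)\subset\mdef(T_0)$ and $T_0x=Tx=\lambda x$, so $\sigma_p(T)\subset\sigma_p(T_0)$.

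For part~(b), fix $\lambda\in\varrho(T_0)$. Injectivity of $T-\lambda$ is inherited from $T_0-\lambda\supset T-\lambda$. To show surjectivity onto $V$, take $y\in V\subset V_0$ and set $x=(T_0-\lambda)^{-1}y$; then $x\in\mdef(T_0)\subset V$ and $(T_0-\lambda)x=y\in V$, so $x\in\mdef(T)$ with $(T-\lambda)x=y$. Boundedness of the inverse as a map on $V$ follows because Lemma~\ref{lem:op1}(a) with $W=V$ yields $(T_0-\lambda)^{-1}\in L(V_0,V)$; restricting along the continuous imbedding $V\hookrightarrow V_0$ gives $(T-\lambda)^{-1}=(T_0-\lambda)^{-1}|_V\in L(V,V)$.

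Part~(c) is the step I expect to require the most care, because the hypothesis that $\mdef(T_0)$ is dense in $V$ does not directly yield density of $\mdef(T)$ in $V$ (elements of $\mdef(T_0)$ need not satisfy $T_0x\in V$). The plan is a double approximation argument, using a fixed $\lambda\in\varrho(T_0)$. Given $x\in V$, first pick $x_n\in\mdef(T_0)$ with $x_n\to x$ in $V$. Set $y_n=(T_0-\lambda)x_n\in V_0$, and then, using that $V$ is dense in $V_0$, choose $\tilde y_n\in V$ with $\|\tilde y_n-y_n\|_{V_0}\to0$. Define $\tilde x_n=(T_0-\lambda)^{-1}\tilde y_n$; since $\tilde y_n\in V$, part~(b) gives $\tilde x_n\in\mdef(T)$. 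The bound $(T_0-\lambda)^{-1}\in L(V_0,V)$ from Lemma~\ref{lem:op1}(a) then yields $\|\tilde x_n-x_n\|_V\to0$, and combining with $x_n\to x$ in $V$ shows $\tilde x_n\to x$ in $V$, establishing that $\mdef(T)$ is dense in $V$.
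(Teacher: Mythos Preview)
Your proof is correct and follows essentially the same route as the paper's. Parts~(a) and~(b) are virtually identical; for part~(c) the paper phrases the argument more compactly by observing that $\mdef(T)=(T_0-\lambda)^{-1}(V)$ is dense in $\mdef(T_0)=(T_0-\lambda)^{-1}(V_0)$ with respect to $\|\cdot\|_V$ (since $(T_0-\lambda)^{-1}\in L(V_0,V)$ and $V\subset V_0$ is dense), but your double approximation is exactly this statement unwound into sequences.
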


\begin{proof}
  \begin{enumerate}
  \item This is clear, since $\mdef(T_0)\subset V$ implies that all eigenvectors
    of $T_0$ belong to $V$.
  \item
    Let $\lambda\in\varrho(T_0)$. Then $T-\lambda:\mdef(T)\to V$ is injective
    as a restriction of $T_0-\lambda$.
    Let $y\in V$ and set $x=(T_0-\lambda)^{-1}y$.
    Then $x\in\mdef(T_0)$, which implies $x\in V$ and $T_0x=\lambda x+y\in V$.
    Therefore $x\in\mdef(T)$ and $(T-\lambda)x=y$.
    Hence $T-\lambda:\mdef(T)\to V$ is bijective with inverse
    $(T-\lambda)^{-1}=(T_0-\lambda)^{-1}|_V$.
    Since $(T_0-\lambda)^{-1}\in L(V_0,V)$ by Lemma~\ref{lem:op1}
    and since $V\hookrightarrow V_0$ is continuous, we obtain
    $(T-\lambda)^{-1}\in L(V)$ and thus $\lambda\in\varrho(T)$.
  \item
    Let $\lambda\in\varrho(T_0)$.
    Since $(T_0-\lambda)^{-1}\in L(V_0,V)$ and since $V\subset V_0$ is dense, we get that
    $\mdef(T)=(T_0-\lambda)^{-1}(V)$ is dense in $\mdef(T_0)=(T_0-\lambda)^{-1}(V_0)$
    with respect to the norm in $V$. As $\mdef(T_0)\subset V$ is dense, we
    conclude that $\mdef(T)\subset V$ is dense.
  \end{enumerate}
\end{proof}

Let us recall the definitions and basic properties of sectorial, bisectorial and
dichotomous operators.
For more details we refer the reader to
\cite{engel-nagel,haase,winklmeier-wyss15}.
We denote by
\begin{equation}
  \Sigma_{\frac\pi2+\theta}=\set{\lambda\in\C\setminus\{0\}}{
  \arg\lambda\in\Bigl[-\frac\pi2-\theta,\frac\pi2+\theta\Bigr]}
\end{equation}
the sector containing the positive real axis with semi-angle
$\frac\pi2+\theta$. We also consider the corresponding bisector
around the imaginary axis
\begin{equation}\label{eq:bisector}
  \Omega_\theta=\Sigma_{\frac\pi2+\theta}\cap\left(-\Sigma_{\frac\pi2+\theta}\right)
  =\set{\lambda\in\C\setminus\{0\}}{
  |\arg\lambda|\in\Bigl[\frac\pi2-\theta,\frac\pi2+\theta\Bigr]}.
\end{equation}

For sectorial operators we adopt the convention that the spectrum
is contained in a sector in the left half-plane:
\begin{definition}\label{def:sectorial}
  A linear operator $S$ on a Banach space $V$
  is called \emph{sectorial} if there
  exist $\theta\geq0$ and $M>0$ such that $\Sigma_{\frac\pi2+\theta}\subset\varrho(S)$ and
  \begin{equation}\label{eq:sectorial}
    \|(S-\lambda)^{-1}\|\leq\frac{M}{|\lambda|} \qquad \text{for all }\;
    \lambda\in\Sigma_{\frac\pi2+\theta}.
  \end{equation}
  $S$ is called \emph{quasi-sectorial} if $S-\mu$ is sectorial for some $\mu\in\R$.
\end{definition}

If  \eqref{eq:sectorial} holds for some $\theta$, then it also holds
for some $\theta'>\theta$ (with a typically larger constant $M$).
We may therefore always assume that $\theta>0$.
$S$ is quasi-sectorial if and only if there exist $\theta,M,\rho>0$
such that\footnote{
  $B_r(z)\subset\C$ denotes the open disc with radius $r$
  centred at $z$.}
$\Sigma_{\frac\pi2+\theta}\setminus B_\rho(0)\subset\varrho(S)$ and
\begin{equation}\label{eq:quasisect}
  \|(S-\lambda)^{-1}\|\leq\frac{M}{|\lambda|} \qquad\text{for all }\;
  \lambda\in\Sigma_{\frac\pi2+\theta},\;
  |\lambda|\geq \rho.
\end{equation}
An operator is  sectorial and densely defined if and only if it is the
generator of a bounded analytic semigroup.
On reflexive Banach spaces every sectorial and quasi-sectorial operator
is densely defined.
If $S$ is a (quasi-) sectorial operator on a Hilbert space,
then its adjoint $S^*$ is also (quasi-) sectorial with the same constants
$\theta$, $M$ (and $\mu,\rho$).

\begin{definition}
  A linear operator $S$ on $V$ is called \emph{bisectorial}
  if $i\R\setminus\{0\}\subset\varrho(S)$ and
  \begin{equation}\label{eq:bisect}
    \|(S-\lambda)^{-1}\|\leq\frac{M}{|\lambda|} \quad\text{for all }\;
    \lambda\in i\R\setminus\{0\}
  \end{equation}
  with some constant $M>0$. $S$ is \emph{almost bisectorial}
  if $i\R\setminus\{0\}\subset\varrho(S)$ and there exist $0<\beta<1$, $M>0$
  such that
  \begin{equation}\label{eq:almbisect}
    \|(S-\lambda)^{-1}\|\leq\frac{M}{|\lambda|^\beta} \quad\text{for all }\;
    \lambda\in i\R\setminus\{0\}.
  \end{equation}
\end{definition}

If $S$ is bisectorial, then for some $\theta>0$ the bisector $\Omega_\theta$
is contained in the resolvent set $\varrho(S)$, 
and an estimate \eqref{eq:bisect} holds for all $\lambda\in\Omega_\theta$.
Similarly, for an almost bisectorial operator a  parabola shaped region around
the imaginary axis belongs to $\varrho(S)$.
If $S$ is bisectorial and $0\in\varrho(S)$, then $S$ is almost bisectorial too,
for any $0<\beta<1$.
Note that an almost bisectorial operator always satisfies $0\in\varrho(S)$,
while for a bisectorial operator $0\in\sigma(S)$ is possible.
Bisectorial operators on reflexive spaces are always densely defined; for
almost bisectorial operators this need not be the case.

\begin{definition}\label{def:dichot}
  A linear operator $S$ on a Banach space $V$ is called \emph{dichotomous}
  if $i\R\subset\varrho(S)$ and there exist closed $S$-invariant subspaces
  $V_\pm$ of $V$ such that $V=V_+\oplus V_-$ and
  \[\sigma(S|_{V_+})\subset\C_+, \quad \sigma(S|_{V_-})\subset \C_-.\]
  $S$ is \emph{strictly dichotomous} if in addition
  $\|(S|_{V_\pm}-\lambda)^{-1}\|$ is bounded on $\C_\mp$.
\end{definition}

A dichotomous operator is block diagonal with respect to the decomposition
$V=V_+\oplus V_-$, see \cite[Remark~2.3 and Lemma~2.4]{tretter-wyss}.
In particular, $\sigma(S)=\sigma(S|_{V_+})\cup\sigma(S|_{V_-})$ and
the subspaces
$V_\pm$ are also $(S-\lambda)^{-1}$-invariant for all $\lambda\in\varrho(S)$.
The additional condition of strict dichotomy ensures that the
invariant subspaces $V_\pm$ are uniquely determined by the operator.

One of the main results from \cite{winklmeier-wyss15}
is that if the resolvent of an operator $S$ is uniformly bounded along the
imaginary axis, then $S$ possesses invariant subspaces $V_\pm$
having the same properties as in Definition~\ref{def:dichot},
with the exception that $V_+\oplus V_-$ might be a proper subspace of $V$,
i.e., $S$ need not necessarily be dichotomous.
In this case, the corresponding projections are unbounded.
We summarise the results for the almost bisectorial situation here.

Let $S$ be an almost bisectorial operator.
Then there exists $h>0$ such that
\(\set{\lambda\in\C}{|\Real\lambda|\leq h}\subset\varrho(S)\)
and the integrals
\begin{equation}\label{eq:Lpm-def}
  L_\pm=\frac{\pm 1}{2\pi i}\int_{\pm h-i\infty}^{\pm h+i\infty}
  \frac{1}{\lambda}(S-\lambda)^{-1}\,d\lambda
\end{equation}
define bounded operators $L_\pm\in L(V)$ which satisfy
\begin{equation}\label{eq:Lpm-prop}
  L_+L_-=L_-L_+=0,\qquad L_++L_-=S^{-1},
\end{equation}
see \cite[\S5]{winklmeier-wyss15}.

\begin{theo}
  \label{theo:sectdichot}
  Let $S$ be almost bisectorial on the Banach space $V$.
  Then $P_\pm=SL_\pm$ are closed complementary projections,
  the subspaces $V_\pm=\range(P_\pm)$
  are closed, $S$- and $(S-\lambda)^{-1}$-invariant for all $\lambda\in\varrho(S)$,
  and
  \begin{enumerate}
  \item $\sigma(S)=\sigma(S|_{V_+})\cup\sigma(S|_{V_-})$
    with $\sigma(S|_{V_\pm})\subset\C_\pm$,
  \item $\|(S|_{V_\pm}-\lambda)^{-1}\|$ is bounded on $\C_\mp$,
  \item $\mdef(S)\subset\mdef(P_\pm)=V_+\oplus V_-\subset V$,
  \item $I=P_++P_-$ on $\mdef(P_\pm)$.
  \end{enumerate}
  The projections satisfy the identity
  \begin{equation}\label{eq:dichotint2}
    P_+x-P_-x=\frac{1}{\pi i}\int_{-i\infty}^{i\infty\,\prime}
    (S-\lambda)^{-1}x\,d\lambda,
    \qquad x\in\mdef(S),
  \end{equation}
  where the prime denotes the Cauchy principal value at infinity.
  Moreover, $S$ is strictly dichotomous if and only if $P_\pm\in L(V)$.
\end{theo}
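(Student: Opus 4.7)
The plan is to extract all the properties from the algebraic identities in \eqref{eq:Lpm-prop} together with the fact that each $L_\pm$ commutes with every resolvent of $S$. First I would note that, by the resolvent identity, $L_\pm$ commutes with $(S-\mu)^{-1}$ for every $\mu \in \varrho(S)$; specialising to $\mu = 0$ gives $L_\pm S^{-1} = S^{-1}L_\pm$, and multiplying $L_+ + L_- = S^{-1}$ by $L_\pm$ while using $L_+L_- = 0$ yields the crucial relation $L_\pm^2 = L_\pm S^{-1} = S^{-1}L_\pm$.

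To obtain (c), I would rewrite the integrand for $x \in \mdef(S)$ via
\[\lambda^{-1}(S-\lambda)^{-1}x = -\lambda^{-2}x + \lambda^{-2}(S-\lambda)^{-1}Sx;\]
the first contribution vanishes upon integration, and the second converges absolutely thanks to the improved decay, so that $S$ may be interchanged with the integral. This gives $L_\pm x \in \mdef(S)$ together with the commutation $SL_\pm x = L_\pm Sx$ on $\mdef(S)$. Combined with $L_\pm^2 = S^{-1}L_\pm$, one then computes for $x \in \mdef(P_\pm)$
\[P_\pm^2 x = SL_\pm(SL_\pm x) = S^2 L_\pm^2 x = SL_\pm x = P_\pm x, \qquad P_+ P_- x = S^2 L_+ L_- x = 0,\]
while (d) follows from $P_+ + P_- = S(L_+ + L_-) = I$ on $\mdef(P_\pm)$. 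The direct-sum identity $\mdef(P_\pm) = V_+ \oplus V_-$ in (c) is then immediate from $V_\pm \subset \mdef(P_\pm)$ (a short check using $L_\pm P_\pm = L_\pm$) and $V_+ \cap V_- = \{0\}$ (from $P_+ P_- = 0$). Closedness of $P_\pm = SL_\pm$ is automatic since $S$ is closed and $L_\pm$ is bounded, closedness of $V_\pm$ follows from closedness of $P_\pm$ together with idempotency, and both the $S$- and resolvent-invariance of $V_\pm$ are consequences of the commutation relations already established.

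The main obstacle is the spectral assertion (a) together with the resolvent estimate (b). For $\lambda \in \C_\mp$ I would construct the resolvent of $S|_{V_\pm}$ at $\lambda$ by a Cauchy-type contour integral along $\Real\mu = \pm h$ with kernel $\mu^{-1}(\mu-\lambda)^{-1}(S-\mu)^{-1}$, suitably coupled with a factor of $S$ so as to undo the $\mu^{-1}$ regularisation and produce a genuine resolvent; absolute convergence is ensured by the bound $\|(S-\mu)^{-1}\| = O(|\mu|^{-\beta})$ combined with the extra factor $(\mu-\lambda)^{-1}$, and a resolvent-identity manipulation verifies that this operator really inverts $S|_{V_\pm} - \lambda$. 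Uniform bounds in $\lambda \in \C_\mp$ at positive distance from the contour then yield (b), and this is precisely the step where the almost-bisectorial condition $\beta < 1$ is essential.

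Finally, for the representation \eqref{eq:dichotint2} I would compute $(P_+ - P_-)x = (L_+ - L_-)Sx$ for $x \in \mdef(S)$, substitute $\lambda^{-1}(S-\lambda)^{-1}Sx = \lambda^{-1}x + (S-\lambda)^{-1}x$, observe that the $\lambda^{-1}x$-contributions $\pm x/2$ from the two lines cancel in $L_+ Sx - L_- Sx$, and then deform both contours $\Real\lambda = \pm h$ onto the imaginary axis; this deformation is legitimate because $(S-\lambda)^{-1}x$ is analytic in the closed strip $|\Real\lambda| \leq h$ and the horizontal closing segments at height $\pm iR$ vanish as $R \to \infty$ by the $|\lambda|^{-\beta}$ bound, the principal value at infinity emerging from this limiting procedure. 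The strict-dichotomy characterisation is then a direct consequence of what has been shown: boundedness of $P_\pm$ forces $\mdef(P_\pm) = V$, whence $V = V_+ \oplus V_-$ and (b) supplies the required resolvent bound; conversely, the bounded spectral projections associated with a strictly dichotomous decomposition must coincide with $P_\pm$ by the uniqueness stemming from (a) and the invariance of $V_\pm$.
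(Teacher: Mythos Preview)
The paper does not prove this theorem at all: it simply cites Theorems~4.1 and~5.6 and Corollaries~4.2 and~5.9 of \cite{winklmeier-wyss15}.  Your proposal is therefore not a comparison with the paper's argument but an independent reconstruction of the cited results, and as such it is largely sound.  The algebraic part (closedness and idempotency of $P_\pm$, the identities $\mdef(P_+)=\mdef(P_-)=V_+\oplus V_-$, invariance, and $V_\pm=\ker L_\mp$) follows exactly as you indicate from $L_\pm^2=S^{-1}L_\pm$ and the commutation relations.  Your derivation of \eqref{eq:dichotint2} is correct; the two $\lambda^{-1}x$ contributions are indeed $\pm x/2$ and cancel, and the contour shift to $i\R$ is justified by the $|\lambda|^{-\beta}$ decay on the horizontal segments.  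For (a) and (b) your idea of using the kernel $\mu^{-1}(\mu-\lambda)^{-1}(S-\mu)^{-1}$ together with a factor of $S$ is the right one: setting $K_\lambda=\frac{1}{2\pi i}\int_{h+i\R}\mu^{-1}(\mu-\lambda)^{-1}(S-\mu)^{-1}\,d\mu$ one checks $(S-\lambda)K_\lambda=L_+$, whence $SK_\lambda|_{V_+}=(S|_{V_+}-\lambda)^{-1}$ for $\Real\lambda<h$, and the uniform bound in $\lambda\in\C_-$ follows from a direct estimate of the convolution-type integral.

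Two points deserve more care than your sketch gives them.  First, (a) asserts the \emph{equality} $\sigma(S)=\sigma(S|_{V_+})\cup\sigma(S|_{V_-})$, not merely $\sigma(S|_{V_\pm})\subset\C_\pm$; the inclusion $\sigma(S|_{V_\pm})\subset\sigma(S)$ is immediate from invariance, but the reverse inclusion requires an additional argument since $V_+\oplus V_-$ may be strictly smaller than $V$.  One way is to observe that $SK_\lambda^++SK_\lambda^-=(S-\lambda)^{-1}$ on the strip $|\Real\lambda|<h$, so $(S-\lambda)^{-1}-SK_\lambda^-$ furnishes an analytic continuation of $SK_\lambda^+$ (and hence of $(S|_{V_+}-\lambda)^{-1}$ after restriction) into $\varrho(S)\cap\C_+$.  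Second, the converse in the strict-dichotomy characterisation---that an arbitrary strictly dichotomous decomposition $V=W_+\oplus W_-$ forces $W_\pm=V_\pm$---is not quite ``uniqueness stemming from (a)''; one needs to show $L_\mp w=0$ for $w\in W_\pm$, and the mere boundedness of $(S|_{W_\pm}-\lambda)^{-1}$ on $\C_\mp$ does not by itself allow the contour to be pushed to infinity.  An extra argument (e.g.\ via $L_\mp w=S^{-1}P_\mp w$ and the analyticity of $\lambda\mapsto(S|_{W_+}-\lambda)^{-1}w$ on $\overline{\C_-}$) is required.
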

\begin{proof}
  All assertions follow from Theorem~4.1 and~5.6
  as well as Corollary~4.2 and~5.9
  in \cite{winklmeier-wyss15}.
\end{proof}  
  
Note that $P_\pm$ are closed complementary projections in the sense that
they are closed operators on $V$ and satisfy
$\range(P_\pm)\subset\mdef(P_\pm)$, $P_\pm^2=P_\pm$,
$\mdef(P_+)=\mdef(P_-)$
and $I=P_++P_-$ on $\mdef(P_\pm)$.
In other words, $P_\pm$ are complementary
projections in the algebraic sense acting on the space $\mdef(P_+)=\mdef(P_-)$. 
Since $S$ is invertible, we obtain
\begin{equation}
  V_\pm=\range(P_\pm)=\ker P_\mp=\ker L_\mp.
\end{equation}
The case that $P_\pm$ are unbounded may occur even for bisectorial
and almost bisectorial $S$,
see Examples~5.8 and~8.2 in \cite{winklmeier-wyss15}.

For use in later sections, we collect some properties of the spaces
$\range(L_\pm)$:
\begin{lemma}\label{lem:Lpm-range}
  Let $S$ be an almost bisectorial operator. Then the inclusions
  \begin{equation}\label{eq:Lpm-range-incl}
      \mdef(S)\cap V_\pm\subset\range(L_\pm) 
  \subset V_\pm
  \end{equation}
  hold, in particular $\overline{\range(L_\pm)}\subset V_\pm$.
  In addition,
  \begin{enumerate}
  \item if $S$ is also densely defined, then
    $\overline{\mdef(S)\cap V_\pm}=\overline{\range(L_\pm)}$;
  \item if $S$ is densely defined and strictly dichotomous, then
    $\mdef(S)\cap V_\pm=\range(L_\pm)$ and $\overline{\range(L_\pm)}=V_\pm$.
  \end{enumerate}
\end{lemma}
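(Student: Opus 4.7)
My approach is to prove the sandwich \eqref{eq:Lpm-range-incl} first and then deduce (a) and (b) from it by density arguments, with the commutation $SL_\pm y=L_\pm Sy$ for $y\in\mdef(S)$ as the central technical ingredient.

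For the right-hand inclusion of \eqref{eq:Lpm-range-incl}, I use that $V_\pm=\ker L_\mp$ (recorded in the displayed identity just before the lemma) together with $L_\mp L_\pm=0$ from \eqref{eq:Lpm-prop}, which places $\range(L_\pm)$ inside $\ker L_\mp=V_\pm$. For the left-hand inclusion, Theorem~\ref{theo:sectdichot}(c) combined with the definition $P_\pm=SL_\pm$ already yields $L_\pm\mdef(S)\subset\mdef(S)$, and since $L_\pm$ commutes with every resolvent $(S-\mu)^{-1}$ (being a Bochner integral of resolvents, and resolvents commute pairwise by the first resolvent identity), this commutation with resolvents translates into $SL_\pm y=L_\pm Sy$ for $y\in\mdef(S)$. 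If now $x\in\mdef(S)\cap V_\pm$, then Theorem~\ref{theo:sectdichot}(d) together with $x\in\ker L_\mp$ gives $x=P_+x+P_-x=P_\pm x=SL_\pm x=L_\pm Sx$, so $x\in\range(L_\pm)$.

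For part (a), given any $x=L_\pm y\in\range(L_\pm)$, I pick $y_n\in\mdef(S)$ with $y_n\to y$ (using density of $\mdef(S)$). The commutation ensures $L_\pm y_n\in\mdef(S)$, the right-hand inclusion places $L_\pm y_n\in V_\pm$, and boundedness of $L_\pm$ gives $L_\pm y_n\to x$; combined with the left-hand inclusion of \eqref{eq:Lpm-range-incl} this produces the desired equality of closures. For part (b), strict dichotomy means $P_\pm\in L(V)$, hence $\mdef(P_\pm)=V$, which by the definition of $P_\pm$ forces $\range(L_\pm)\subset\mdef(S)$; combined with the left-hand inclusion this upgrades to $\range(L_\pm)=\mdef(S)\cap V_\pm$. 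For the identity $\overline{\range(L_\pm)}=V_\pm$, I observe that $P_\pm$, being bounded and commuting with $(S-\mu)^{-1}$, preserves $\mdef(S)$; then for $v\in V_\pm$ and a sequence $v_n\in\mdef(S)$ with $v_n\to v$ we have $P_\pm v_n\in\mdef(S)\cap V_\pm$ and $P_\pm v_n\to P_\pm v=v$.

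The step I expect to be the main obstacle is the commutation $SL_\pm=L_\pm S$ on $\mdef(S)$, since it underlies almost every line above. The cleanest route is to first note that $L_\pm$ commutes with every resolvent $(S-\mu)^{-1}$ on $V$ via the first resolvent identity and the integral definition of $L_\pm$, and then to use the membership $L_\pm\mdef(S)\subset\mdef(S)$ supplied by Theorem~\ref{theo:sectdichot}(c) to lift this resolvent commutation to a commutation with $S$ itself, avoiding any direct manipulation of the defining integral.
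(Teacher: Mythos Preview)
Your proposal is correct and follows essentially the same approach as the paper: both arguments rest on the identities $L_+L_-=0$, $L_++L_-=S^{-1}$, $V_\pm=\ker L_\mp$, the commutation $SL_\pm=L_\pm S$ on $\mdef(S)$, and density of $\mdef(S)$. The only cosmetic difference is that for the left inclusion of \eqref{eq:Lpm-range-incl} the paper writes the one-line chain $\mdef(S)\cap V_\pm=S^{-1}(V_\pm)=L_\pm(V_\pm)$ directly from $S^{-1}=L_++L_-$ and the invariance of $V_\pm$, whereas you route through $x=P_\pm x=L_\pm Sx$; similarly in (a) the paper uses $L_\pm(V_+\oplus V_-)=L_\pm(V_\pm)$ in place of your explicit approximation, but the content is the same.
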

\begin{proof}
  From \eqref{eq:Lpm-prop} and the invariance properties of $V_\pm$ we get
  \[\mdef(S)\cap V_\pm=S^{-1}(V_\pm)=L_\pm(V_\pm)\subset
  \range(L_\pm)\subset\ker L_\mp=V_\pm.\]
  Since $V_\pm$ are closed, $\overline{\range(L_\pm)}\subset V_\pm$ follows.
  If $S$ is densely defined, then part (c) of the previous theorem
  yields $\overline{V_+\oplus V_-}=V$.
  Therefore
  \[\range(L_\pm)=L_\pm(\overline{V_+\oplus V_-})\subset
  \overline{L_\pm(V_+\oplus V_-)}=\overline{L_\pm(V_\pm)}
  =\overline{\mdef(S)\cap V_\pm},\]
  and hence the inclusion ``$\supset$'' in (a) holds.
  The other inclusion is clear by \eqref{eq:Lpm-range-incl}.
  If now $S$ is also strictly dichotomous, then $P_\pm$ are bounded.
  In particular $\range(L_\pm)\subset\mdef(S)$ and hence
  $\range(L_\pm)=\mdef(S)\cap V_\pm$.
  Using that $S$ and $L_\pm$ commute, we obtain
  \[V_\pm=\range(P_\pm)=P_\pm(\overline{\mdef(S)})\subset
  \overline{P_\pm(\mdef(S))}=\overline{L_\pm S(\mdef(S))}=\overline{\range(L_\pm)}\]
  and hence $\overline{\range(L_\pm)}=V_\pm$ by \eqref{eq:Lpm-range-incl}.
  %
\end{proof}

We remark that the inclusion
$\overline{\range(L_\pm)}\subset V_\pm$ is strict in general, see
\cite[\S6]{winklmeier-wyss15}
and Examples~8.3 and~8.5 in \cite{winklmeier-wyss15}.

\section{Two scales of Hilbert spaces associated with a closed operator}
\label{sec:scale}

In this section we construct two scales of Hilbert spaces $\{H_s\}$ and $\{H_s\ads\}$
associated with a closed, densely defined operator $A$.
Although the results are well known,
the presentations found in the literature often cover only parts of the
full theory
or are restricted to certain special cases:
The construction of the spaces $H_{\pm1}$ and $H_{\pm1}\ads$ for general $A$
can be found e.g.\ in \cite{jonas-langer,tucsnak-weiss}.
The intermediate spaces for $s=\pm\frac12$ are defined in
\cite{jonas-langer} for general, and in \cite{tucsnak-weiss} for
selfadjoint positive $A$.
The spaces $H_s$ with arbitrary $s$ are constructed in \cite{jonas-trunk}
for selfadjoint $A$,
while a general theory of scales of Hilbert spaces including
interpolation results is contained in \cite{berezanskii}.
Note that in \cite{tucsnak-weiss} a different naming convention and
different but equivalent
definitions of the spaces are used.
Our presentation follows \cite{berezanskii,jonas-langer}.

Let $A$ be a closed, densely defined linear operator on a
separable Hilbert space $H$.
We denote by $\|\cdot\|$ the norm on $H$
and consider the positive selfadjoint operator $\Lambda=(I+AA^*)^\frac12$.
For $s>0$ let $H_s=\mdef(\Lambda^s)$ be equipped with the norm
$\|x\|_s=\|\Lambda^s x\|$, and let $H_{-s}$ be the completion of $H$
with respect to the norm $\|x\|_{-s}=\|\Lambda^{-s}x\|$.
Then $H_s$ and $H_{-s}$ are Hilbert spaces,
\[H_s\subset H\subset H_{-s},\]
and the imbeddings are continuous and dense.
The family of spaces $\{H_s\}$ is called a \emph{scale of Hilbert spaces}.
In particular we obtain $H_1=\mdef(A^*)$ and
\[\|x\|_1=(\|x\|^2+\|A^*x\|^2)^{\frac12}, \quad x\in H_1.\]

For any $s>0$, the spaces $H_s$ and $H_{-s}$ are dual to each other
with respect to the inner product
$\iprod{\cdot}{\cdot}$ of $H$.
More precisely, the norm on $H_s$ satisfies
\[\|y\|_{-s}=\sup\bigset{|\iprod{x}{y}|}{x\in H_s,\,\|x\|_s=1}, \quad y\in H,\]
which implies that the inner product of $H$ extends by continuity
to a bounded sesquilinear form
on $H_s\times H_{-s}$, which we denote by $\iprods{\cdot}{\cdot}{s}$.
In fact,
\[\iprods{x}{y}{s}=\iprod{\Lambda^s x}{\Lambda^{-s} y},
\quad x\in H_s,\,y\in H.\]
The space $H_{-s}$ can now be identified with the dual space of $H_s$ by means
of the isometric isomorphism $H_{-s}\to (H_s)'$, $y\mapsto\iprods{\cdot}{y}{s}$.
For convenience, we also define a sesquilinear form on $H_{-s}\times H_s$
by
\[\iprodsr{y}{x}{s}=\overline{\iprods{x}{y}{s}},\quad x\in H_s,\, y\in H_{-s}.\]

With respect to the duality in the scale $\{H_s\}$ we obtain the following
notion of adjoint operators:

\begin{definition}
  Let $W$ be a Hilbert space and $C\in L(H_s,W)$.
  Then the operator $C^*\in L(W,H_{-s})$ satisfying
  \begin{equation}\label{eq:sadjoint}
    \iprod{Cx}{w}_W=\iprods{x}{C^*w}{s}, \quad x\in H_s,\,w\in W,
  \end{equation}
  where $\iprod{\cdot}{\cdot}_W$ denotes the inner product of $W$,
  is called the \emph{adjoint of $C$ with respect to the scale $\{H_s\}$.}
  Similarly the adjoint of $B\in L(W,H_{-s})$ with respect to $\{H_s\}$
  is the operator $B^*\in L(H_s,W)$
  such that
  \begin{equation}\label{eq:sadjoint2}
    \iprods{x}{Bw}{s}=\iprod{B^*x}{w}_W, \quad x\in H_s,\,w\in W.
  \end{equation}
\end{definition}

The adjoints exist, are uniquely determined and satisfy
$B=B^{**}$, $C=C^{**}$, $\|B\|=\|B^*\|$ and $\|C\|=\|C^*\|$.
The adjoints of $\tilde C\in L(W,H_s)$ and $\tilde B\in L(H_{-s},W)$
are defined in a similar way.
If $C\in L(H_s,W)$ is an isomorphism, then $C^*$ is an isomorphism too
and $(C^*)^{-1}=(C^{-1})^*$.



\begin{remark}
The notion of adjoints with respect to the scale $\{H_s\}$
generalises the usual definition of adjoints of  unbounded
operators on Hilbert spaces:
Let $C\in L(H_s,W)$. Then $C$ can be regarded as a densely defined unbounded
operator $C_1:\mdef(C_1)\subset H\to W$ with domain $\mdef(C_1)=H_s$.
The adjoint of $C_1$ in the usual sense of unbounded operators is an operator
$C_1^*:\mdef(C_1^*)\subset W\to H$.
Observe that $C_1$ and $C_1^*$ satisfy
\eqref{eq:sadjoint} provided that $w\in\mdef(C_1^*)$.
Consequently
$C_1^*$ is a restriction of $C^*:W\to H_{-s}$. In fact
\[\mdef(C_1^*)=\set{w\in W}{C^*w\in H}.\]
Note here that $C\in L(H_s,W)$ does not imply that
$C_1$ is closable. Hence $C_1^*$ need not be densely
defined and even $\mdef(C_1^*)=\{0\}$ is possible.
\end{remark}

Since $H_1=\mdef(A^*)$ and
since $\|\cdot\|_1$ is equal to the graph norm of $A^*$,
we can consider $A^*$ as a bounded operator $A^*:H_1\to H$.
The adjoint with respect to $\{H_s\}$ is a bounded operator $A^{**}:H\to H_{-1}$
and in view of the last remark $A^{**}$ is an extension of the original
operator $A$. We will denote this extension by $A$ again,
\[A:H\to H_{-1}.\]
Now for any $\lambda\in\varrho(A)$, the operator
$A^*-\bar\lambda:H_1\to H$ is an isomorphism. Hence its adjoint
$A-\lambda:H\to H_{-1}$ is an isomorphism too.
In particular  $\|(A-\lambda)^{-1}\cdot\|$
is an equivalent norm on $H_{-1}$.

Consider now the positive selfadjoint operator $\Lambda_*=(I+A^*A)^{\frac12}$,
and let $\{H\ads_s\}$ be the scale of Hilbert spaces associated with it.
In other words, we repeat the above construction with the roles of $A$ and $A^*$ interchanged.
We denote the respective norms 
and the extension of the inner product
by $\|\cdot\|\ads_s$, $\|\cdot\|\ads_{-s}$
and $\iprods{\cdot}{\cdot}{s}\ads$.
Moreover $H\ads_1=\mdef(A)$, the norm on $H\ads_1$ is equal to the graph norm of $A$,
the norm on $H\ads_{-1}$ is equivalent to $\|(A^*-\lambda)^{-1}\cdot\|$ for
$\lambda\in\varrho(A^*)$, and we get bounded operators
\[A:H\ads_1\to H,\qquad A^*:H\to H\ads_{-1}.\]

\begin{lemma}\label{lem:scale-cpt}
  If $A$ has a compact resolvent, then the imbeddings
  $H_s\hookrightarrow H$ and $H_s\ads\hookrightarrow H$
  are compact for all $s>0$.
\end{lemma}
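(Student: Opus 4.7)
The strategy is to reduce the statement to compactness of the negative fractional powers $\Lambda^{-s}$ and $\Lambda_*^{-s}$ on $H$. Indeed, for any bounded set $B\subset H_s$ one has $B=\Lambda^{-s}(\Lambda^s B)$ with $\Lambda^s B$ bounded in $H$, so once $\Lambda^{-s}\in L(H)$ is known to be compact, $B$ is precompact in $H$ as the image of a bounded set under a compact operator. The argument for $\{H_s\ads\}$ is identical with $\Lambda$ replaced by $\Lambda_*=(I+A^*A)^{1/2}$, so I will focus on $\{H_s\}$.

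The first step is the case $s=1$. Since $A$ has compact resolvent, so does $A^*$: for $\lambda\in\varrho(A)$ the operator $(A^*-\bar\lambda)^{-1}=((A-\lambda)^{-1})^*$ is the Hilbert-space adjoint of a compact operator, hence itself compact. Because the norm $\|\cdot\|_1$ is equivalent to the graph norm of $A^*$, every bounded set $B\subset H_1$ has $(A^*-\bar\lambda)B$ bounded in $H$, and therefore $B=(A^*-\bar\lambda)^{-1}((A^*-\bar\lambda)B)$ is precompact in $H$. This shows that $H_1\hookrightarrow H$ is compact.

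Next I would obtain compactness of $\Lambda^{-2}=(I+AA^*)^{-1}$ on $H$ by invoking Lemma~\ref{lem:op1}(b) with $T=I+AA^*$, $V=H$, $W=H_1$ and $\lambda=0$: the operator $I+AA^*$ is positive self-adjoint with $\mathcal{D}(I+AA^*)\subset\mathcal{D}(A^*)=H_1$ and $0\in\varrho(I+AA^*)$, while $H_1\hookrightarrow H$ is compact by the previous step. Since $\Lambda\geq I$ is positive self-adjoint and $\Lambda^{-2}$ is compact, the spectral theorem yields a discrete spectrum $\{\mu_n\}\subset[1,\infty)$ with $\mu_n\to\infty$; consequently $\Lambda^{-s}$ has eigenvalues $\mu_n^{-s}\to 0$ and is a compact operator on $H$ for every $s>0$, which closes the reduction from the first paragraph.

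No step in this scheme poses a serious obstacle; the only delicate point is the passage from $\Lambda^{-2}$ compact to $\Lambda^{-s}$ compact for non-integer $s$, but this is immediate from the spectral picture of the positive self-adjoint $\Lambda$. The analogous argument for the starred scale goes through verbatim after interchanging the roles of $A$ and $A^*$.
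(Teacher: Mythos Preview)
Your proof is correct and follows essentially the same route as the paper: show $H_1\hookrightarrow H$ is compact via the compact resolvent of $A^*$, deduce that a negative power of $\Lambda$ is compact on $H$, and then factor the imbedding $H_s\hookrightarrow H$ through $\Lambda^{-s}$. The only cosmetic difference is that the paper obtains compactness of $\Lambda^{-1}$ directly by writing it as $H\xrightarrow{\Lambda^{-1}}H_1\hookrightarrow H$, whereas you reach $\Lambda^{-2}$ via Lemma~\ref{lem:op1}(b) and then invoke the spectral theorem explicitly; the paper's ``consequently $\Lambda^{-s}$ is compact'' relies on the same spectral picture you spell out.
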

\begin{proof}
  Let $\lambda\in\varrho(A)$. So $(A-\lambda)^{-1}$ and $(A^*-\bar\lambda)^{-1}$
  are compact operators in $L(H)$.
  The imbedding $H_1\hookrightarrow H$ can be written as the composition
  \[H_1\xrightarrow{A^*-\bar\lambda} H \xrightarrow{(A^*-\bar\lambda)^{-1}} H.\]
  Since $A^*-\bar\lambda:H_1\to H$ is bounded, it follows that $H_1\hookrightarrow H$ is compact.
  Since $\Lambda^{-1}:H\to H_1$ is bounded, the sequence
  \[H\xrightarrow{\Lambda^{-1}} H_1\hookrightarrow H\]
  implies that the operator $\Lambda^{-1}:H\to H$ is compact.
  Consequently $\Lambda^{-s}:H\to H$ is also compact for all $s>0$.
  Decomposing $H_s\hookrightarrow H$ as
  \[H_s\xrightarrow{\Lambda^{s}} H\xrightarrow{\Lambda^{-s}} H\]
  where $\Lambda^s:H_s\to H$ is bounded, we conclude that
  $H_s\hookrightarrow H$ is compact.
  The proof for $H_s\ads\hookrightarrow H$ is analogous.
\end{proof}

For operators acting between two scales of Hilbert spaces, there is
the following  interpolation result,  which is also known as Heinz' inequality,
see \cite[Theorem I.7.1]{krein}.
Let $H$ and $G$ be Hilbert spaces.
Consider
the scales of Hilbert spaces  $\{H_s\}$ and $\{G_r\}$
with corresponding
positive selfadjoint operators  $\Lambda$ and $\Delta$ on $H$ and $G$, respectively.
\begin{theo}[\mbox{\cite[Theorem III.6.10]{berezanskii}}]
  Let $r_1<r_2$, $s_1<s_2$ and let $B:G_{r_1}\to H_{s_1}$ be a bounded linear operator
  which restricts to a bounded operator $B:G_{r_2}\to H_{s_2}$.
  Let $0<\lambda<1$ and
  \[r=\lambda r_1+(1-\lambda)r_2,\quad s=\lambda s_1+(1-\lambda)s_2.\]
  Then  $B$ also restricts to a bounded operator $B:G_r\to H_s$ and
  \[\|B\|_{G_r\to H_s}\leq\|B\|^\lambda_{G_{r_1}\to H_{s_1}}\|B\|^{1-\lambda}_{G_{r_2}\to H_{s_2}}.\]
\end{theo}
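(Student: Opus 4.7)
The plan is to prove this bound by complex interpolation, applying the Hadamard three-lines theorem to a suitable operator-valued analytic function. First I would translate the statement into an estimate on the pivot spaces. Since the positive selfadjoint operators $\Lambda$ and $\Delta$ generating the scales satisfy $\Lambda^\sigma : H_\sigma \to H$ and $\Delta^\rho : G_\rho \to G$ isometrically for every $\sigma, \rho \in \R$, the assertion of the theorem is equivalent to the single inequality
$$\|\Lambda^s B \Delta^{-r}\|_{G \to H} \leq M_1^\lambda M_2^{1-\lambda}, \qquad M_j = \|\Lambda^{s_j} B \Delta^{-r_j}\|_{G \to H},$$
all three operators being defined a priori on a suitable dense subspace of $G$.

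Next, I would embed the exponents in a complex parameter. For $z$ in the closed strip $S = \{z \in \C : 0 \leq \Re z \leq 1\}$ set $s(z) = z s_1 + (1-z) s_2$ and $r(z) = z r_1 + (1-z) r_2$, so that $s(\lambda) = s$ and $r(\lambda) = r$. Using the functional calculus for the positive selfadjoint operators $\Lambda$ and $\Delta$, the complex powers $\Lambda^{s(z)}$ and $\Delta^{-r(z)}$ are defined on the dense subspaces $D_\Lambda = \bigcap_{k \geq 0} \mdef(\Lambda^k)$ and $D_\Delta = \bigcap_{k \geq 0} \mdef(\Delta^k)$. For unit vectors $\xi \in D_\Delta$, $\eta \in D_\Lambda$ and a small $\varepsilon > 0$, I consider the scalar function
$$f(z) = e^{\varepsilon(z-\lambda)^2} \iprod{\Lambda^{s(z)} B \Delta^{-r(z)} \xi}{\eta}.$$
The spectral theorem represents $\Delta^{-r(z)} \xi$ and $\Lambda^{\overline{s(z)}} \eta$ as integrals of $z$-holomorphic integrands against the spectral measures of $\Delta$ and $\Lambda$; differentiation under the integral shows that $f$ is holomorphic on the interior of $S$ and continuous on the closure, while the Gaussian factor supplies the decay as $|\Im z| \to \infty$ needed to apply the three-lines theorem.

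On the boundary $\Re z = j$, $j \in \{0, 1\}$, the factorisation $\Lambda^{s(j+it)} = \Lambda^{s_{2-j}} \Lambda^{it(s_1-s_2)}$ together with the unitarity of the imaginary power $\Lambda^{it(s_1-s_2)}$ on $H$, and the analogous factorisation for $\Delta^{-r(j+it)}$, yields
$$|f(j+it)| \leq e^{\varepsilon(j-\lambda)^2 - \varepsilon t^2} M_{2-j}$$
after Cauchy-Schwarz. The Hadamard three-lines theorem applied to $f$ then gives $|f(\lambda)| \leq M_1^\lambda M_2^{1-\lambda} \cdot e^{\varepsilon\lambda(1-\lambda)}$; letting $\varepsilon \to 0$ and taking the supremum over unit $\xi \in D_\Delta$, $\eta \in D_\Lambda$ yields the desired operator norm estimate, which extends to all of $G$ by density. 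The main obstacle is the rigorous verification of the holomorphy and boundary behaviour of $f$ in the strip, i.e., making precise that $z \mapsto \Lambda^{s(z)} B \Delta^{-r(z)} \xi$ is an $H$-valued holomorphic function on $S$; this is where the choice of the dense cores $D_\Lambda, D_\Delta$ and the Gaussian convergence factor are essential to bridge the gap between the endpoint estimates and the interior of the strip.
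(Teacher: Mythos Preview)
The paper does not supply its own proof of this theorem; it is quoted verbatim from Berezanskii \cite[Theorem~III.6.10]{berezanskii} (and cross-referenced as Heinz' inequality via \cite[Theorem~I.7.1]{krein}), so there is no in-paper argument to compare against.

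Your proposal is the standard complex-interpolation proof via the Hadamard three-lines theorem, and the strategy is correct. The reduction to the pivot-space estimate $\|\Lambda^{s}B\Delta^{-r}\|\leq M_1^\lambda M_2^{1-\lambda}$ is exactly the right normalisation, and the Gaussian factor $e^{\varepsilon(z-\lambda)^2}$ is the usual device to force boundedness on the strip without extra hypotheses. One point worth tightening: when you pass to $\iprod{\Lambda^{s(z)}B\Delta^{-r(z)}\xi}{\eta}=\iprod{B\Delta^{-r(z)}\xi}{\Lambda^{\overline{s(z)}}\eta}$, the second factor is anti-holomorphic in $z$; holomorphy of the inner product survives because the second slot of $\iprod{\cdot}{\cdot}$ is conjugate-linear, but this deserves an explicit sentence. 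Otherwise the argument is sound, and the ``main obstacle'' you flag (holomorphy and boundary continuity of $f$) is genuinely just a routine dominated-convergence check once $\xi\in D_\Delta$ and $\eta\in D_\Lambda$ are fixed, since then all spectral integrals involved have exponentially good decay.
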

We remark that if $B$ restricts to an operator $B:G_{r_2}\to H_{s_2}$, i.e.,
if $B$ maps $G_{r_2}$ into $H_{s_2}$, then
the boundedness of the restriction already follows from
the closed graph theorem.

Applying interpolation to $A:H\ads_1\to H$ and its extension
$A:H\to H_{-1}$, we obtain that $A$ also acts as a bounded operator
\[A:H\ads_{1-s}\to H_{-s}, \qquad s\in[0,1].\]
Similarly,
\[A^*:H_{1-s}\to H\ads_{-s}, \qquad s\in[0,1].\]
Moreover, if $\lambda\in\varrho(A)$ then
$A-\lambda:H\ads_{1-s}\to H_{-s}$ and $A^*-\bar\lambda:H_{1-s}\to H\ads_{-s}$
are both isomorphisms.
Here surjectivity follows from the fact that for example
the resolvent $(A-\lambda)^{-1}$ is an operator in $L(H,H\ads_1)$ and $L(H_{-1},H)$
and hence by interpolation also in $L(H_{-s},H\ads_{1-s})$.

The extensions of $A$ and $A^*$ satisfy the identity
\begin{equation}\label{eq:extAadj}
  \iprodsr{Ax}{y}{s}=\iprod{x}{A^*y}_{1-s,s-1}\ads,\quad x\in H_{1-s}\ads,\,y\in H_{s}.
\end{equation}
This follows from an extension by continuity of the relation
$\iprod{Ax}{y}=\iprod{x}{A^*y}$, $x\in\mdef(A)$, $y\in\mdef(A^*)$.

In view of the above, using appropriate restrictions and extensions,
the resolvent $(A-\lambda)^{-1}$
belongs to $L(H)$ as well as $L(H_{-1})$ and $L(H_1\ads)$.
Similarly, $(A^*-\bar\lambda)^{-1}$ belongs to $L(H)$, $L(H_{-1}\ads)$
and $L(H_1)$. The corresponding operator norms can be
estimated as follows:
\begin{lemma}
  For any $\lambda\in\varrho(A)$ the estimates
  \[\|(A-\lambda)^{-1}\|_{L(H_{-1})}\leq \|(A^*-\bar\lambda)^{-1}\|_{L(H_1)}
  \leq \|(A-\lambda)^{-1}\|_{L(H)}\]
  and
  \[\|(A^*-\bar\lambda)^{-1}\|_{L(H_{-1}\ads)}\leq\|(A-\lambda)^{-1}\|_{L(H_1\ads)}
  \leq\|(A^*-\bar\lambda)^{-1}\|_{L(H)}\]
  hold.
\end{lemma}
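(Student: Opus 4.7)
\begin{proof*}[Proof sketch]
The two chains of inequalities are symmetric under the exchange $A\leftrightarrow A^*$, $\lambda\leftrightarrow\bar\lambda$ (together with $\{H_s\}\leftrightarrow\{H_s\ads\}$), so I only need to prove the first chain. For brevity write $R=(A-\lambda)^{-1}$ and $S=(A^*-\bar\lambda)^{-1}$, viewed first as bounded operators on~$H$, so that $\|R\|_{L(H)}=\|S\|_{L(H)}$.

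The right inequality $\|S\|_{L(H_1)}\leq\|R\|_{L(H)}$ rests on the commutation of $S$ with $A^*$ on $H_1=\mdef(A^*)$. For $x\in H_1$ we have $Sx\in\mdef(A^*)$ and
\[A^*Sx=x+\bar\lambda Sx=S\bigl((A^*-\bar\lambda)x+\bar\lambda x\bigr)=SA^*x,\]
so $Sx\in H_1$. Since $\|y\|_1^2=\|y\|^2+\|A^*y\|^2$ for every $y\in H_1$, I then get
\[\|Sx\|_1^2=\|Sx\|^2+\|SA^*x\|^2\leq\|S\|_{L(H)}^2\bigl(\|x\|^2+\|A^*x\|^2\bigr)=\|S\|_{L(H)}^2\|x\|_1^2,\]
which is the desired bound.

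The left inequality $\|R\|_{L(H_{-1})}\leq\|S\|_{L(H_1)}$ follows from duality via the pivot space $H$. The pairing identity is, for $x\in H_1$ and $y\in H$,
\[\iprod{x}{Ry}=\iprod{(A^*-\bar\lambda)Sx}{Ry}=\iprod{Sx}{(A-\lambda)Ry}=\iprod{Sx}{y},\]
where the middle step uses $Sx\in\mdef(A^*)$ and $Ry\in\mdef(A)$ together with the standard adjoint relation between $A$ and $A^*$. Using the duality characterisation $\|z\|_{-1}=\sup\{|\iprod{x}{z}|:x\in H_1,\,\|x\|_1=1\}$ of the $\|\cdot\|_{-1}$-norm on $H$, I obtain for $y\in H$
\[\|Ry\|_{-1}=\sup_{\|x\|_1=1}|\iprod{Sx}{y}|\leq\sup_{\|x\|_1=1}\|Sx\|_1\|y\|_{-1}\leq\|S\|_{L(H_1)}\|y\|_{-1},\]
and the density of $H$ in $H_{-1}$ together with the continuity of $R$ on $H_{-1}$ extends this bound to all of $H_{-1}$.

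There is no real obstacle; the only point demanding some care is to keep the domains of $A$, $A^*$, $R$, $S$ and their restrictions/extensions straight, so that the formal manipulation $A^*Sx=SA^*x$ and the integration-by-parts identity $\iprod{(A^*-\bar\lambda)Sx}{Ry}=\iprod{Sx}{(A-\lambda)Ry}$ are used only where each side genuinely makes sense in $H$.
\end{proof*}
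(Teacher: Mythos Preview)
Your proof is correct and follows essentially the same approach as the paper: the right inequality via the commutation $A^*Sx=SA^*x$ combined with the graph-norm identity $\|y\|_1^2=\|y\|^2+\|A^*y\|^2$, and the left inequality via the duality $\iprod{x}{Ry}=\iprod{Sx}{y}$ together with the characterisation of $\|\cdot\|_{-1}$ as a dual norm. The only cosmetic difference is that the paper carries out the duality step directly for $y\in H_{-1}$ using the extended pairing $\iprods{\cdot}{\cdot}{1}$, whereas you argue first for $y\in H$ and then pass to $H_{-1}$ by density; both are equally valid.
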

\begin{proof}
  From
  \begin{align*}
    \|(A^*-\bar\lambda)^{-1}x\|_1^2
    &=\|(A^*-\bar\lambda)^{-1}x\|^2+\|A^*(A^*-\bar\lambda)^{-1}x\|^2\\
    &=\|(A^*-\bar\lambda)^{-1}x\|^2+\|(A^*-\bar\lambda)^{-1}A^*x\|^2\\
    &\leq\|(A^*-\bar\lambda)^{-1}\|_{L(H)}^2\|x\|_1^2
  \end{align*}
  for $x\in H_1$
  we obtain
  \[\|(A^*-\bar\lambda)^{-1}\|_{L(H_1)}\leq\|(A^*-\bar\lambda)^{-1}\|_{L(H)}
  =\|(A-\lambda)^{-1}\|_{L(H)}.\]
  Moreover for $x\in H_1$, $y\in H_{-1}$,
  \begin{align*}
    |\iprod{x}{(A-\lambda)^{-1}y}|
    &=|\iprods{(A^*-\bar\lambda)^{-1}x}{y}{1}|\\
    &\leq\|(A^*-\bar\lambda)^{-1}\|_{L(H_1)}\|x\|_1\|y\|_{-1},
  \end{align*}
  which implies
  $\|(A-\lambda)^{-1}y\|_{-1}
  \leq\|(A^*-\bar\lambda)^{-1}\|_{L(H_1)}\|y\|_{-1}$
  and hence
  \[\|(A-\lambda)^{-1}\|_{L(H_{-1})}\leq\|(A^*-\bar\lambda)^{-1}\|_{L(H_1)}.\]
  The other estimates are analogous.
\end{proof}

Interpolation now yields the following:
\begin{coroll}\label{cor:scale-Anorms}
  For $\lambda\in\varrho(A)$, $s\in[0,1]$,
  \begin{gather*}
    \|(A-\lambda)^{-1}\|_{L(H_{-s})}\leq\|(A-\lambda)^{-1}\|_{L(H)},\\
    \|(A^*-\bar\lambda)^{-1}\|_{L(H_{-s}\ads)}\leq\|(A^*-\bar\lambda)^{-1}\|_{L(H)}.
  \end{gather*}
\end{coroll}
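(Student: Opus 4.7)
The plan is to obtain both estimates by interpolating between the endpoint cases $s=0$ and $s=1$, with the $s=0$ case being trivial and the $s=1$ case being precisely the content of the preceding lemma. The interpolation theorem stated just above (Berezanskii / Heinz) is tailor-made for this, once we set it up with a single scale appearing as both source and target.

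First I would note that for $s=0$ the inequality is an equality, and for $s=1$ it is the first assertion of the preceding lemma. So only $s\in(0,1)$ needs argument. For this, I would apply the interpolation theorem with $G=H$ and the scale $\{G_r\}=\{H_r\}$, taking $B=(A-\lambda)^{-1}$ and endpoint exponents $r_1=s_1=-1$, $r_2=s_2=0$. The hypothesis is satisfied because $B$ acts boundedly on $H_{-1}$ (by the preceding lemma) and on $H$. For the convex combination parameter $\mu\in(0,1)$ one takes $\mu=s$, giving $r=s=-s$. The theorem then yields
\[
  \|(A-\lambda)^{-1}\|_{L(H_{-s})}\leq\|(A-\lambda)^{-1}\|_{L(H_{-1})}^{s}\,
  \|(A-\lambda)^{-1}\|_{L(H)}^{1-s}.
\]
Applying the preceding lemma to bound the first factor by $\|(A-\lambda)^{-1}\|_{L(H)}^{s}$ collapses the right-hand side to $\|(A-\lambda)^{-1}\|_{L(H)}$, which is the claim.

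The second estimate follows by the analogous argument with the roles of $A$ and $A^*$ interchanged: interpolate $(A^*-\bar\lambda)^{-1}$ between $L(H_{-1}\ads)$ (where it is bounded by the second inequality of the preceding lemma) and $L(H)$, along the scale $\{H_r\ads\}$, and again use the lemma to replace the $H_{-1}\ads$-norm by the $H$-norm.

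I do not anticipate any real obstacle here; the only minor point of care is applying the interpolation theorem with source and target living in the same scale (i.e.\ $G=H$, $r_i=s_i$), which is formally permitted by the statement. Once the endpoint bounds from the preceding lemma are in hand, the result is immediate.
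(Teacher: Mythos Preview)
Your proposal is correct and is exactly what the paper does: the corollary is stated immediately after the lemma with the words ``Interpolation now yields the following,'' and your write-up simply spells out that interpolation between the endpoints $s=0$ and $s=1$ using the lemma's bound at $s=1$. The only cosmetic point is that you might avoid reusing the letter $s$ for the interpolation exponent in the theorem (where it already names the target-scale index), but the argument itself is fine.
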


\section{The Hamiltonian}
\label{sec:ham}

Let $A$ be a closed, densely defined operator on a Hilbert space $H$
and let $\{H_s\}$ and $\{H\ads_s\}$ be the associated scales of
Hilbert spaces defined in Section~\ref{sec:scale}.
Let
\[B\in L(U, H_{-r}), \qquad C\in L(H\ads_{s},Y)\]
where
$U,Y$ are additional Hilbert spaces and $r,s\in[0,1]$ satisfy $r+s\leq 1$.
The adjoints of $B$ and $C$ with respect to the scales of Hilbert spaces
are
\[B^*\in L(H_r, U), \qquad C^*\in L(Y,H\ads_{-s}).\]
We define the \emph{Hamiltonian} as the operator matrix
\[T_0=\pmat{A&-BB^*\\-C^*C&-A^*}.\]
Then $T_0$ is a well-defined linear operator from
\(\mdef(T_0)=H\ads_{1-r}\times H_{1-s}\)
to the product Hilbert space
\[V_0=H_{-r}\times H\ads_{-s}.\]
Indeed we have
\begin{gather*}
  A:H\ads_{1-r}\to H_{-r},  \qquad
  BB^*:H_r\to H_{-r},\\
  C^*C:H\ads_s\to H\ads_{-s},
  \qquad A^*:H_{1-s}\to H\ads_{-s},
\end{gather*}
and the assumption $r+s\leq1$ implies 
\[H\ads_{1-r}\subset H\ads_s, \qquad H_{1-s}\subset H_r.\]
We consider $T_0$ as an unbounded operator on $V_0$ with domain $\mdef(T_0)$ as above.
In particular, $T_0$ is densely defined.

Alongside $V_0$ we will also consider the two product Hilbert spaces
\[V_1=H_s\ads\times H_r \qquad\text{and}\qquad V=H\times H.\]
Thus
\[\mdef(T_0)\subset V_1\subset V\subset V_0.\]
Let $T$ be the part of $T_0$ in $V$.
Then $\sigma_p(T)=\sigma_p(T_0)$.
Moreover  $T$ will be densely defined as soon as
$\varrho(T_0)\neq\varnothing$.
This follows from Lemma~\ref{lem:op2}
since both inclusions $\mdef(T_0)\subset V$ and $V\subset V_0$ are dense.

\begin{lemma}\label{lem:pointgap}
  The Hamiltonian satisfies
  \[\sigma_p(T_0)\cap i\R=\varnothing\]
  if and only if
  \begin{equation}\label{eq:pointgapcond}
    \ker(A-it)\cap\ker C=\ker(A^*+it)\cap\ker B^*=\{0\}
    \quad\text{for all}\quad t\in\R.
  \end{equation}
\end{lemma}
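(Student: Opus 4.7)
The equivalence splits into two implications, of which the ``if'' direction (kernel condition fails implies $it\in\sigma_p(T_0)$) is essentially plugging in, while the ``only if'' direction is the substantive one.

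For the ``if'' direction, suppose $x\in\ker(A-it)\cap\ker C$ is nonzero. Then $x\in\mdef(A)=H\ads_1\subset H\ads_{1-r}$, so $\smallmat{x\\0}\in\mdef(T_0)$, and directly
\[T_0\smallmat{x\\0}=\smallmat{Ax\\-C^*Cx}=\smallmat{itx\\0}=it\smallmat{x\\0}.\]
Symmetrically, a nonzero $y\in\ker(A^*+it)\cap\ker B^*$ produces the eigenvector $\smallmat{0\\y}\in\{0\}\times H_1\subset\mdef(T_0)$.

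For the ``only if'' direction, suppose $T_0\smallmat{x\\y}=it\smallmat{x\\y}$ with $(x,y)\in H\ads_{1-r}\times H_{1-s}$ nonzero, so that
\[(A-it)x = BB^*y, \qquad (A^*+it)y = -C^*Cx.\]
The plan is to pair the first equation with $y$ in the duality $\iprodsr{\cdot}{\cdot}{r}:H_{-r}\times H_r\to\C$ (valid since $y\in H_{1-s}\subset H_r$, using $r+s\leq 1$) and the second with $x$ in $\iprod{\cdot}{\cdot}_{1-r,r-1}\ads:H\ads_{1-r}\times H\ads_{r-1}\to\C$ (valid since $C^*Cx\in H\ads_{-s}\subset H\ads_{r-1}$). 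The right-hand sides evaluate, via the defining property of the scale adjoints, to $\|B^*y\|_U^2$ and $-\|Cx\|_Y^2$ respectively. The two left-hand sides are equal: identity~\eqref{eq:extAadj} (with $s$ replaced by $r$) gives $\iprodsr{Ax}{y}{r}=\iprod{x}{A^*y}_{1-r,r-1}\ads$, and the $\pm it$-contributions match because $x,y$ both lie in $H$ and the two scale dualities both coincide with $\iprod{\cdot}{\cdot}$ on $H\times H$. Equating the two sides yields $\|B^*y\|_U^2+\|Cx\|_Y^2=0$, forcing $B^*y=0=Cx$; the eigenvalue equations then collapse to $(A-it)x=0$ and $(A^*+it)y=0$, hence $x\in\ker(A-it)\cap\ker C$ and $y\in\ker(A^*+it)\cap\ker B^*$ with at least one of them nonzero, in violation of \eqref{eq:pointgapcond}.

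The only real obstacle is the bookkeeping across the two scales $\{H_s\}$ and $\{H\ads_s\}$: one must verify that each pairing is defined where needed and that they agree on the overlap $H\times H$ so that the imaginary parts cancel cleanly. Once this is in place, the identity is essentially an instance of the indefinite $J$-symmetry of $T_0$ foreshadowed in Section~\ref{sec:ham}, applied to the eigenvector against itself.
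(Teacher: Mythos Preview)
Your proof is correct and follows essentially the same approach as the paper: pair each component of the eigenvalue equation against the other variable via the scale dualities, use \eqref{eq:extAadj} to match the $A$/$A^*$ terms, and deduce $\|B^*y\|_U^2+\|Cx\|_Y^2=0$. One cosmetic remark: your labels ``if'' and ``only if'' are swapped relative to the usual reading of the biconditional (what you call the ``only if'' direction is the contrapositive of the implication \eqref{eq:pointgapcond} $\Rightarrow$ $\sigma_p(T_0)\cap i\R=\varnothing$), though the mathematical content under each heading is correct.
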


\begin{proof}
  Suppose first that \eqref{eq:pointgapcond} holds and that
  \[\pmat{x\\y}\in\mdef(T_0),\qquad T_0\pmat{x\\y}=it\pmat{x\\y},\qquad t\in\R.\]
  Then
  \[(A-it)x-BB^*y=0,\qquad -C^*Cx-(A^*+it)y=0\]
  where $x\in H_{1-r}\ads\subset H_s\ads$, $y\in H_{1-s}\subset H_r$.
  Using the extended inner products of the scales $\{H_s\}$ and $\{H\ads_s\}$,
  we find
  \begin{equation}\label{eq:gapproof1}
    \begin{aligned}
    0&=\iprod{(A-it)x-BB^*y}{y}=\iprodsr{(A-it)x}{y}{r}-\iprodsr{BB^*y}{y}{r},\\
    0&=\iprod{-C^*Cx-(A^*+it)y}{x}=-\iprodsr{C^*Cx}{x}{s}\ads-\iprodsr{(A^*+it)y}{x}{s}\ads.
    \end{aligned}
  \end{equation}
  From \eqref{eq:extAadj} we see that
  \[\iprodsr{Ax}{y}{r}=\iprods{x}{A^*y}{s}\ads,\quad x\in H_{1-r}\ads,\,y\in H_{1-s}.\]
  Adding the two equations in \eqref{eq:gapproof1} and taking the real part,
  we thus obtain
  \[0=-\iprodsr{BB^*y}{y}{r}-\iprodsr{C^*Cx}{x}{s}\ads=
  -\|B^*y\|_U^2-\|Cx\|_Y^2.\]
  Consequently $B^*y=Cx=0$ and hence also $(A-it)x=(A^*+it)y=0$.
  Now \eqref{eq:pointgapcond} implies $x=y=0$ and so $it\not\in\sigma_p(T_0)$.
  For the reverse implication note that if for example
  $x\in\ker(A-it)\cap\ker C$ and $x\neq0$, then $(x,0)$ is an eigenvector of $T_0$
  with eigenvalue $it$.
\end{proof}

\begin{lemma}\label{lem:apprgap}
  The Hamiltonian satisfies
  \begin{equation}\label{eq:spectralgapincl}
    \sigma_\mathrm{app}(T_0)\cap i\R\subset\sigma(A).
  \end{equation}
\end{lemma}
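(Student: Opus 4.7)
The plan is to prove the contrapositive: if $it\in\varrho(A)$ then $T_0-it$ is bounded below on $V_0$, so $it\notin\sigma_{\mathrm{app}}(T_0)$. Fix $t\in\R$ with $it\in\varrho(A)$ and, for contradiction, take an approximate null sequence $(x_n,y_n)\in\mdef(T_0)=H\ads_{1-r}\times H_{1-s}$ with $\|x_n\|_{-r}^2+(\|y_n\|_{-s}\ads)^2=1$ and
\[
(A-it)x_n-BB^*y_n=f_n\to 0\text{ in }H_{-r},\qquad -C^*Cx_n-(A^*+it)y_n=g_n\to 0\text{ in }H\ads_{-s}.
\]

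The first step is to adapt the energy identity from the proof of Lemma~\ref{lem:pointgap} to this inhomogeneous situation. I pair the first equation with $y_n$ via $\iprodsr{\cdot}{\cdot}{r}$, which is legitimate since $y_n\in H_{1-s}\subset H_r$ by $r+s\leq 1$, and the second with $x_n$ via $\iprodsr{\cdot}{\cdot}{s}\ads$, using $x_n\in H\ads_{1-r}\subset H\ads_s$. Adding the two identities, the duality relation \eqref{eq:extAadj} gives $\iprodsr{Ax_n}{y_n}{r}=\iprods{x_n}{A^*y_n}{s}\ads$, so $\iprodsr{Ax_n}{y_n}{r}-\iprodsr{A^*y_n}{x_n}{s}\ads$ is purely imaginary; the $\pm it$ contributions also combine to a purely imaginary quantity. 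Taking real parts therefore leaves
\[
\|B^*y_n\|_U^2+\|Cx_n\|_Y^2\leq\|f_n\|_{-r}\,\|y_n\|_r+\|g_n\|_{-s}\ads\,\|x_n\|_s\ads.
\]

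Next I exploit the assumed invertibility: $A-it\colon H\ads_{1-r}\to H_{-r}$ and $A^*+it\colon H_{1-s}\to H\ads_{-s}$ are isomorphisms, so solving the two equations for $x_n$ and $y_n$ and using the embeddings $H\ads_{1-r}\subset H\ads_s$, $H_{1-s}\subset H_r$ (which again require $r+s\leq 1$) yields
\[
\|x_n\|_s\ads\leq C\bigl(\|f_n\|_{-r}+\|B^*y_n\|_U\bigr),\qquad\|y_n\|_r\leq C\bigl(\|g_n\|_{-s}\ads+\|Cx_n\|_Y\bigr).
\]
Substituting these into the energy estimate and applying Young's inequality $ab\leq\tfrac12 a^2+\tfrac12 b^2$ to absorb the resulting $\|B^*y_n\|$ and $\|Cx_n\|$ factors on the right back into the left-hand side produces
\[
\|B^*y_n\|_U^2+\|Cx_n\|_Y^2\leq C'\bigl(\|f_n\|_{-r}^2+(\|g_n\|_{-s}\ads)^2\bigr),
\]
so $B^*y_n\to 0$ in $U$ and $Cx_n\to 0$ in $Y$.

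Reinserting this information into the two bounds above gives $\|x_n\|_s\ads\to 0$ and $\|y_n\|_r\to 0$, and the continuous embeddings $H\ads_s\hookrightarrow H\hookrightarrow H_{-r}$ and $H_r\hookrightarrow H\hookrightarrow H\ads_{-s}$ then force $\|x_n\|_{-r}+\|y_n\|_{-s}\ads\to 0$, contradicting the normalisation. The main technical hurdle is the cancellation in the energy identity: one has to check that all four pairings are well defined (this is precisely where the hypothesis $r+s\leq 1$ and the duality identity~\eqref{eq:extAadj} intervene) and that the two mixed $A$-terms combine into a purely imaginary quantity, in the same way as in the homogeneous computation of Lemma~\ref{lem:pointgap}.
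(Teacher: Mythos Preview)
Your argument is correct and rests on the same two ingredients as the paper's proof: the energy identity inherited from Lemma~\ref{lem:pointgap} (pairing the residuals against $y_n$ and $x_n$ and taking real parts to extract $\|B^*y_n\|_U^2+\|Cx_n\|_Y^2$), and the invertibility of $A-it$ and $A^*+it$ between the appropriate interpolation spaces.

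The execution differs in one technical point worth noting. You keep the $V_0$-normalisation $\|x_n\|_{-r}^2+(\|y_n\|_{-s}\ads)^2=1$, which forces you to bootstrap: the energy identity produces a right-hand side involving $\|y_n\|_r$ and $\|x_n\|_s\ads$, quantities not a priori bounded, so you feed the invertibility bounds back in and close with Young's inequality. The paper instead \emph{renormalises in $V_1$} at the outset: since $\|v_n\|_{V_0}\leq c\|v_n\|_{V_1}$, one may replace $v_n$ by $v_n/\|v_n\|_{V_1}$, obtaining a sequence with $\|x_n\|_s^{(*)2}+\|y_n\|_r^2=1$ that still satisfies $(T_0-it)(x_n,y_n)\to 0$ in $V_0$. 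With this normalisation the right-hand side of the energy identity is immediately bounded by $\|f_n\|_{-r}+\|g_n\|_{-s}\ads\to 0$, and no bootstrap is needed; the contradiction is then obtained directly in the $V_1$-norm. Your route is slightly longer but entirely self-contained, and it has the minor advantage of yielding an explicit quantitative bound of the form $\|B^*y_n\|_U^2+\|Cx_n\|_Y^2\leq C'(\|f_n\|_{-r}^2+(\|g_n\|_{-s}\ads)^2)$ rather than merely a limit statement.
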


\begin{proof}
  Let $t\in\R$, $it\in\sigma_\mathrm{app}(T_0)$.
  Then there exist $v_n\in\mdef(T_0)$ such that
  $\|v_n\|_{V_0}=1$
  and
  \[\lim_{n\to\infty}(T_0-it)v_n=0 \quad\text{in}\quad V_0.\]
  By the continuity of the imbedding $V_1\hookrightarrow V_0$
  there is a constant $c>0$ such that
  \[1=\|v_n\|_{V_0}\leq c\|v_n\|_{V_1}.\]
  Thus also
  \[\lim_{n\to\infty}(T_0-it)\frac{v_n}{\|v_n\|_{V_1}}=0 \quad\text{in}\quad V_0.\]
  Setting
  $(x_n,y_n)=v_n/\|v_n\|_{V_1}$
  we obtain
  $\|x_n\|_s^{(*)2}+\|y_n\|_r^2=1$ and
  \[\lim_{n\to\infty}(T_0-it)\pmat{x_n\\y_n}=0 \quad\text{in}\quad V_0,\]
  or
  \begin{equation}\label{eq:gapproof2}
    \begin{alignedat}{2}
    (A-it)x_n-BB^*y_n&\to0 \quad&&\text{in}\quad H_{-r}\\
    -C^*Cx_n-(A^*+it)y_n&\to0 &&\text{in}\quad H_{-s}\ads
    \end{alignedat}
  \end{equation}
  as $n\to\infty$.
  Since the sequences $(x_n)$ and $(y_n)$ are bounded in $H_s\ads$ and $H_r$, respectively,
  this implies that
  \begin{align*}
    &\iprodsr{(A-it)x_n-BB^*y_n}{y_n}{r}\to0\\
    &\iprodsr{-C^*Cx_n-(A^*+it)y_n}{x_n}{s}\ads\to0
  \end{align*}
  Similarly to the previous proof, we add these identities and take the real part to obtain
  \[-\iprodsr{BB^*y_n}{y_n}{r}-\iprodsr{C^*Cx_n}{x_n}{s}\ads
  =-\|B^*y_n\|_U^2-\|Cx_n\|_Y^2\to0.\]
  Consequently $B^*y_n\to0$ and $Cx_n\to0$.

  Now suppose in addition that $it\in\varrho(A)$.
  Then $A-it$ is an isomorphism from $H_{1-r}\ads$ to $H_{-r}$,
  see section~\ref{sec:scale}.
  Therefore $(A-it)^{-1}\in L(H_{-r},H_s\ads)$ and analogously
  $(A^*+it)^{-1}\in L(H_{-s}\ads,H_r)$.
  It follows that
  \[(A-it)^{-1}BB^*y_n\to0 \quad\text{in}\quad H_s\ads, \qquad
  (A^*+it)^{-1}C^*Cx_n\to 0 \quad\text{in}\quad H_r.\]
  On the other hand, we infer from \eqref{eq:gapproof2} that
  \begin{alignat*}{2}
    x_n-(A-it)^{-1}BB^*y_n&\to0 \quad&&\text{in}\quad H_s\ads,\\
    -(A^*+it)^{-1}C^*Cx_n-y_n&\to0 &&\text{in}\quad H_r.
  \end{alignat*}
  Therefore $x_n\to0$ in $H_s\ads$ and $y_n\to0$ in $H_r$, which
  contradicts $\|x_n\|_s^{(*)2}+\|y_n\|_r^2=1$.
\end{proof}

\begin{lemma}\label{lem:ham-cptresolv}
  If $A$ has a compact resolvent, $r+s<1$ and $\varrho(T_0)\neq\varnothing$,
  then both $T$ and $T_0$ have a compact resolvent too.
\end{lemma}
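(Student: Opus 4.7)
The plan is to deduce both compactness statements from Lemma~\ref{lem:op1}(b) applied with the intermediate space $W=\mdef(T_0)=H\ads_{1-r}\times H_{1-s}$. For $T_0$ acting on $V_0=H_{-r}\times H\ads_{-s}$, I would take $W$ as above and verify that $W\hookrightarrow V_0$ is compact. For the part $T$ in $V=H\times H$, since $\mdef(T)\subset\mdef(T_0)=W\subset V$, I would use the same $W$ and verify that $W\hookrightarrow V$ is compact; the hypothesis $\varrho(T_0)\neq\varnothing$ combined with Lemma~\ref{lem:op2}(b) guarantees that $\varrho(T)\neq\varnothing$ as well, which is needed to invoke Lemma~\ref{lem:op1}(b).

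The key step would be proving the two compact imbeddings above, and this is where the strict inequality $r+s<1$ is crucial. Because $r,s\geq 0$ and $r+s<1$, both $1-r$ and $1-s$ are strictly positive, so under the standing assumption that $A$ has a compact resolvent, Lemma~\ref{lem:scale-cpt} provides compact imbeddings $H\ads_{1-r}\hookrightarrow H$ and $H_{1-s}\hookrightarrow H$, from which compactness of the product imbedding $W\hookrightarrow V$ is immediate. To cross from the starred scale to the non-starred one (and vice versa), I would factor through the pivot space: composing with the continuous imbeddings $H\hookrightarrow H_{-r}$ and $H\hookrightarrow H\ads_{-s}$ yields compact imbeddings $H\ads_{1-r}\hookrightarrow H_{-r}$ and $H_{1-s}\hookrightarrow H\ads_{-s}$, whence $W\hookrightarrow V_0$ is compact as well.

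With these compactness statements in hand, the conclusion would follow immediately: for any $\lambda\in\varrho(T_0)$ (which exists by assumption), Lemma~\ref{lem:op1}(b) gives the compactness of $(T_0-\lambda)^{-1}$ on $V_0$, while Lemma~\ref{lem:op2}(b) places $\lambda$ in $\varrho(T)$, so a second application of Lemma~\ref{lem:op1}(b) yields the compactness of $(T-\lambda)^{-1}$ on $V$. I do not anticipate a real obstacle here; the only subtle point is the bookkeeping between the two Hilbert scales $\{H_s\}$ and $\{H_s\ads\}$, which is precisely what forces the factorisation through $H$ and explains why the condition $r+s<1$ (rather than $r+s\leq 1$) is imposed.
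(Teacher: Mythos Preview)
Your proposal is correct and follows essentially the same route as the paper: use Lemma~\ref{lem:op2} to obtain $\varrho(T)\neq\varnothing$, invoke Lemma~\ref{lem:scale-cpt} (with $1-r,1-s>0$ thanks to $r+s<1$) to get the compact imbeddings $H_{1-r}\ads\times H_{1-s}\hookrightarrow V$ and, after composing with $V\hookrightarrow V_0$, into $V_0$, and then apply Lemma~\ref{lem:op1}(b). The paper's proof is just a more compressed version of exactly this argument.
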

\begin{proof}
  First we have $\varrho(T)\neq\varnothing$ by Lemma~\ref{lem:op2}.
  Lemma~\ref{lem:scale-cpt} shows that the imbeddings
  $H_{1-r}\ads\times H_{1-s}\hookrightarrow V$
  and $H_{1-r}\ads\times H_{1-s}\hookrightarrow V_0$ are
  compact. 
  Since $\mdef(T)\subset\mdef(T_0)=H_{1-r}\ads\times H_{1-s}$, Lemma~\ref{lem:op1}
  implies that the resolvents of $T$ and $T_0$ are compact.
\end{proof}

On $V=H\times H$ we consider the two indefinite inner products
\begin{equation}\label{eq:Jinner}
  \braket{v}{w}=\iprod{Jv}{w}, \quad \braket{v}{w}_\sim=\iprod{\tilde Jv}{w},
  \quad v,w\in H\times H,
\end{equation}
with fundamental symmetries
\[J=\pmat{0&-iI\\iI&0},\qquad \tilde J=\pmat{0&I\\I&0}.\]
For $v=(x,y),w=(\tilde x,\tilde y)$ this yields
\begin{equation*}
  \braket{v}{w}=i\iprod{x}{\tilde y}-i\iprod{y}{\tilde x},
  \qquad\braket{v}{w}_\sim=\iprod{x}{\tilde y}+\iprod{y}{\tilde x}.
\end{equation*}
For the first inner product, we also consider its extension
to $v\in V_1=H_s\ads\times H_r$ and $w\in V_0=H_{-r}\times H_{-s}\ads$
which we denote again by $\braket{\cdot}{\cdot}$ and which is given by
\begin{equation}\label{eq:extJinner}
  \begin{aligned}
  &\braket{v}{w}=i\iprods{x}{\tilde y}{s}\ads-i\iprods{y}{\tilde x}{r},\\
  &\braket{w}{v}=i\iprodsr{\tilde x}{y}{r}-i\iprodsr{\tilde y}{x}{s}\ads=
  \overline{\braket{v}{w}}.
  \end{aligned}
\end{equation}
Note that the extended inner product is non-degenerate in the sense that
if $w\in V_0$ is such that $\braket{v}{w}=0$ for all $v\in V_1$,
then $w=0$.
Analogously $v\in V_1$ with $\braket{v}{w}=0$ for all $w\in V_0$ implies
$v=0$.

The Hamiltonian has the following properties with respect to
the inner products defined above:

\begin{lemma}\label{lem:Hamsym}
  \begin{gather*}
    \braket{T_0v}{w}=-\braket{v}{T_0w},\qquad v,w\in\mdef(T_0),\\
    \Real\braket{Tv}{v}_\sim\leq0, \qquad v\in\mdef(T).
  \end{gather*}
\end{lemma}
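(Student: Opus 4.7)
The plan is to verify both identities by direct expansion, using the extended sesquilinear pairings on the two Hilbert scales together with the adjoint relations for $A$, $B$, $C$.

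For the first identity, I would write $v=(x,y)$ and $w=(\tilde x,\tilde y)$ in $\mdef(T_0)=H_{1-r}\ads\times H_{1-s}$. Since $T_0v, T_0w\in V_0$ while $v,w\in V_1$, both $\braket{T_0 v}{w}$ and $\braket{v}{T_0 w}$ are computed via the extended form \eqref{eq:extJinner}, each unfolding into four scale-pairings. Grouping the sum $\braket{T_0 v}{w}+\braket{v}{T_0 w}$ by the operator producing each term, the two $BB^*$-pairings collapse via \eqref{eq:sadjoint2} to $\pm i\iprod{B^*y}{B^*\tilde y}_U$ and cancel; likewise the two $C^*C$-pairings reduce via \eqref{eq:sadjoint} to $\pm i\iprod{Cx}{C\tilde x}_Y$ and cancel. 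The remaining four $A,A^*$-pairings cancel pairwise using the duality identity \eqref{eq:extAadj} together with conjugate symmetry of the scale pairings $\iprodsr{\cdot}{\cdot}{r}$ and $\iprods{\cdot}{\cdot}{s}\ads$.

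For the second statement, take $v=(x,y)\in\mdef(T)$, so that $Tv\in V=H\times H$. Expanding $\braket{Tv}{v}_\sim=\iprod{Ax-BB^*y}{y}+\iprod{-C^*Cx-A^*y}{x}$ along the scales — which is legitimate because each component of $Tv$ lies in $H$, even though $Ax$ and $BB^*y$ separately need not — the $BB^*$ and $C^*C$ contributions give $-\|B^*y\|_U^2$ and $-\|Cx\|_Y^2$ respectively. The remaining cross-term $\iprodsr{Ax}{y}{r}-\iprodsr{A^*y}{x}{s}\ads$ equals $2i\Imag\iprods{x}{A^*y}{s}\ads$ by \eqref{eq:extAadj} and conjugate symmetry, hence is purely imaginary. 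Taking the real part yields $\Real\braket{Tv}{v}_\sim=-\|B^*y\|_U^2-\|Cx\|_Y^2\leq0$.

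The main obstacle is bookkeeping rather than any deep idea. The duality \eqref{eq:extAadj} is stated with one particular index pair on the $\{H_s\ads\}$-scale, namely $(1-s,s-1)$, whereas the pairings that arise naturally in the expansions use different indices (such as $(s,-s)\ads$ in the $\{H_s\ads\}$-scale or $(r,-r)$ in the $\{H_s\}$-scale). Reconciling them requires the consistency of the extended pairings: if both $\iprod{a}{b}^{(*)}_{k,-k}$ and $\iprod{a}{b}^{(*)}_{k',-k'}$ make sense, they agree, since each is the unique continuous extension of the $H$-inner product. Once this is acknowledged, both identities reduce to routine algebra.
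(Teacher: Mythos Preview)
Your proposal is correct and follows essentially the same route as the paper's proof: both arguments expand the pairings componentwise via \eqref{eq:extJinner}, use the adjoint identities \eqref{eq:sadjoint}, \eqref{eq:sadjoint2}, and \eqref{eq:extAadj} to handle the $BB^*$, $C^*C$, and $A,A^*$ contributions, and then observe that the surviving $A$-terms in $\braket{Tv}{v}_\sim$ are purely imaginary. The only cosmetic difference is that the paper transforms $\braket{T_0v}{w}$ directly into $\braket{v}{-T_0w}$, whereas you show $\braket{T_0v}{w}+\braket{v}{T_0w}=0$ by grouping terms; your explicit remark on the consistency of the scale pairings across different index pairs is a point the paper uses silently.
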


\begin{proof}
    Let $v,w\in\mdef(T_0)=H_{1-r}\ads\times H_{1-s}$ and
    $v=(x,y),w=(\tilde x,\tilde y)$. Then
    \[x,\tilde x\in H_{1-r}\ads\subset H_s\ads,\quad
    y,\tilde y\in H_{1-s}\subset H_r, \quad
    T_0v,T_0w\in V_0=H_{-r}\times H_{-s}\ads.\]
    We obtain
    \begin{align*}
      \braket{T_0v}{w}&=
      i\iprodsr{Ax-BB^*y}{\tilde y}{r}-i\iprodsr{-C^*Cx-A^*y}{\tilde x}{s}\ads\\
      &=i\iprodsr{Ax}{\tilde y}{r}-i\iprodsr{BB^*y}{\tilde y}{r}
      +i\iprodsr{C^*Cx}{\tilde x}{s}\ads+i\iprodsr{A^*y}{\tilde x}{s}\ads\\
      &=i\iprods{x}{A^*\tilde y}{s}\ads-i\iprods{y}{BB^*\tilde y}{r}
      +i\iprods{x}{C^*C\tilde x}{s}\ads+i\iprods{y}{A\tilde x}{r}\\
      &=i\iprods{x}{C^*C\tilde x+A^*\tilde y}{s}\ads
      -i\iprods{y}{-A\tilde x+BB^*\tilde y}{r}\\
      &=\braket{v}{-T_0w}.
    \end{align*}
    Let now $v=(x,y)\in\mdef(T)$. Then
    \begin{align*}
      \braket{Tv}{v}_\sim&=
      \iprod{Ax-BB^*y}{y}+\iprod{-C^*Cx-A^*y}{x}\\
      &=\iprodsr{Ax}{y}{r}-\iprodsr{BB^*y}{y}{r}
      -\iprodsr{C^*Cx}{x}{s}\ads-\iprodsr{A^*y}{x}{s}\ads\\
      &=\iprodsr{Ax}{y}{r}-\|B^*y\|_U^2-\|Cx\|_Y^2-\iprods{y}{Ax}{r}
    \end{align*}
    and hence
    \[\Real\braket{Tv}{v}_\sim=-\|B^*y\|_U^2-\|Cx\|_Y^2\leq0.\]
\end{proof}

\begin{coroll}\label{cor:specsym}
  \begin{enumerate}
  \item\label{cor:specsym:it1}
    If there exists $\lambda\in\C$ such that $\lambda,-\bar\lambda\in\varrho(T_0)$,
    then $T$ is $J$-skew-selfadjoint and $\sigma(T)$ is symmetric with respect
    to the imaginary axis.
  \item If both $T$ and $T_0$ have a compact resolvent, then
    $\sigma(T_0)$ is symmetric with respect to the imaginary axis.
  \end{enumerate}
\end{coroll}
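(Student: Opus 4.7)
For part (a), the plan is to first extract $J$-skew-symmetry of $T$ from Lemma~\ref{lem:Hamsym}, then upgrade it to $J$-skew-selfadjointness using the resolvent hypothesis, and finally read off the spectral symmetry from the identity $T^*=-JTJ$. To start, for $v,w\in\mdef(T)\subset\mdef(T_0)$ one has $T_0v=Tv$ and $T_0w=Tw\in V$, so the extended pairing $\braket{\cdot}{\cdot}$ appearing in Lemma~\ref{lem:Hamsym} reduces to the ordinary $\iprod{J\cdot}{\cdot}$ on $V$. The identity $\braket{T_0v}{w}=-\braket{v}{T_0w}$ then reads $\iprod{JTv}{w}=-\iprod{Jv}{Tw}$ for all $w\in\mdef(T)$, which says $Jv\in\mdef(T^*)$ and $T^*Jv=-JTv$. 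Using $J=J^*=J^{-1}$, this is the inclusion $T\subset-JT^*J$, and taking Hilbert-space adjoints yields the reverse inclusion $-JTJ\subset T^*$ at the operator level.

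To force equality, I invoke the hypothesis $\lambda,-\bar\lambda\in\varrho(T_0)\subset\varrho(T)$, where the inclusion is Lemma~\ref{lem:op2}(b). Since $\mdef(T_0)$ is dense in $V$ and $V$ in $V_0$, Lemma~\ref{lem:op2}(c) makes $T$ densely defined (and it is closed because $\varrho(T)\neq\varnothing$); hence $\lambda\in\varrho(T)$ implies $\bar\lambda\in\varrho(T^*)$, and $T^*-\bar\lambda:\mdef(T^*)\to V$ is a bijection. The identity
\[
-JTJ-\bar\lambda=-J\bigl(T-(-\bar\lambda)\bigr)J,
\]
an immediate consequence of $J^2=I$, together with $-\bar\lambda\in\varrho(T)$ shows that $-JTJ-\bar\lambda$ maps $J\mdef(T)$ bijectively onto $V$. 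As $-JTJ-\bar\lambda\subset T^*-\bar\lambda$ and the smaller operator is already surjective, the two domains must coincide, so $T^*=-JTJ$, which is $J$-skew-selfadjointness. Spectral symmetry then follows from the chain $\overline{\sigma(T)}=\sigma(T^*)=\sigma(-JTJ)=-\sigma(T)$, the middle equality because conjugation by the bounded bijection $J$ preserves spectra; so $\sigma(T)$ is invariant under $\lambda\mapsto-\bar\lambda$.

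For part (b), compact resolvents imply that both $\sigma(T)$ and $\sigma(T_0)$ consist of isolated eigenvalues; combined with $\sigma_p(T)=\sigma_p(T_0)$ from Lemma~\ref{lem:op2}(a), this gives $\sigma(T)=\sigma(T_0)$. Since $\sigma(T_0)$ is countable while $\C$ is not, one can pick $\lambda\in\C$ with both $\lambda,-\bar\lambda\in\varrho(T_0)$, so part~(a) applies and the symmetry of $\sigma(T)$ transfers to $\sigma(T_0)$.

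The main obstacle is the domain-matching step in (a): the inclusion $T\subset-JT^*J$ is essentially immediate from Lemma~\ref{lem:Hamsym}, but the reverse inclusion is not automatic and needs the resolvent hypothesis, via the intertwining identity above, to pin $\mdef(T^*)$ down onto $J\mdef(T)$. Everything else is a routine unpacking of definitions and standard spectral identities.
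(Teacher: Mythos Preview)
Your proof is correct and follows essentially the same route as the paper's: extract $J$-skew-symmetry from Lemma~\ref{lem:Hamsym}, use $\lambda,-\bar\lambda\in\varrho(T_0)\subset\varrho(T)$ to upgrade to $J$-skew-selfadjointness, and then read off the spectral symmetry; part~(b) is identical. The only difference is that where the paper simply invokes ``the theory of operators in Krein spaces'' for the upgrade step, you spell out the standard argument directly (both $-JTJ-\bar\lambda$ and $T^*-\bar\lambda$ are bijections onto $V$ with the first contained in the second, forcing equality of domains); this makes your write-up more self-contained but is not a genuinely different approach. One minor remark: the two inclusions $T\subset-JT^*J$ and $-JTJ\subset T^*$ you record are equivalent (conjugate by $J$), so the phrase ``reverse inclusion'' is slightly misleading, but you do not misuse it and the subsequent resolvent argument is what actually pins down the domain.
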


\begin{proof}
  The previous lemma yields $\braket{Tv}{w}=-\braket{v}{Tw}$ for
  $v,w\in V$.
  Also recall that $T$ is densely defined since $\varrho(T_0)\neq\varnothing$.
  Lemma~\ref{lem:op2} implies $\varrho(T_0)\subset \varrho(T)$ and hence
  $\lambda,-\bar\lambda\in\varrho(T)$.
  By the theory of operators in Krein spaces, we conclude that $T$ is
  skew-selfadjoint with respect to the $J$-inner product, which
  in turn implies the symmetry of the spectrum.
  If now both resolvents are compact, then
  $\sigma(T)=\sigma_p(T)=\sigma_p(T_0)=\sigma(T_0)$ and the symmetry
  of the spectrum follows from part (a).
\end{proof}

\begin{remark}
  The symmetries of the Hamiltonian with respect to the two indefinite
  inner products on $H\times H$ have been used already in
  \cite{langer-ran-rotten,tretter-wyss,wyss-rinvsubham,wyss-unbctrlham}.
  The use of the Hamiltonian  $T_0$ on the extended space
  $V_0$ as well as the extended indefinite inner product
  is new here and is motivated by
  the better properties of $T_0$ compared to $T$.
\end{remark}

\section{Bisectorial Hamiltonians}\label{sec:bisecham}

Starting from this section we consider Hamiltonians whose
operator $A$ is quasi-sectorial, see Definition~\ref{def:sectorial}.
Recall from  Section~\ref{sec:ham} that
\[ V_1=H_s\ads\times H_r,\qquad
V_0=H_{-r}\times H_{-s}\ads\]
and
\begin{gather*}
  BB^*\in L(H_r, H_{-r}), \qquad
  C^*C\in L(H\ads_s, H\ads_{-s}),
\end{gather*}
We consider the following decomposition of $T_0$ on $V_0$:
\begin{equation}\label{eq:T0decomp}
  T_0=S_0+R, \qquad
  S_0=\pmat{A&0\\0&-A^*},\quad R=\pmat{0&-BB^*\\-C^*C&0}.
\end{equation}
Here $S_0$, like $T_0$, is an unbounded operator on $V_0$ with domain
$\mdef(S_0)=\mdef(T_0)=H_{1-r}\ads\times H_{1-s}$.
On the other hand, $R$ is a bounded operator $R\in L(V_1,V_0)$.

By Corollary~\ref{cor:scale-Anorms} the extensions of $A$ and $A^*$
to unbounded operators on $H_{-r}$ and $H_{-s}\ads$, respectively,
are quasi-sectorial and satisfy
\[\|(A-\lambda)^{-1}\|_{L(H_{-r})}\leq\frac{M}{|\lambda|},\qquad
\|(A^*-\lambda)^{-1}\|_{L(H_{-s}\ads)}\leq\frac{M}{|\lambda|}\]
for all
$\lambda\in\Sigma_{\frac\pi2+\theta}$, $|\lambda|\geq \rho$
where $\theta,M,\rho$ are the constants from \eqref{eq:quasisect}.
Consequently
\begin{equation}\label{eq:S0est}
  \|(S_0-\lambda)^{-1}\|_{L(V_0)}\leq\frac{M}{|\lambda|},\qquad \lambda\in\Omega_\theta,
  |\lambda|\geq\rho,
\end{equation}
with $\Omega_\theta$ the bisector from \eqref{eq:bisector}.

We derive a few estimates for the resolvents of $A$ and $A^*$
with respect to the scales of Hilbert spaces
$\{H_s\}$ and $\{H_s\ads\}$.
\begin{lemma}\label{lem:sectinterp}
  Let $A$ be quasi-sectorial and let $\theta,M,\rho>0$ be the corresponding
  constants from \eqref{eq:quasisect}.
  Then for all $\lambda\in\Sigma_{\frac\pi2+\theta}$ with $|\lambda|\geq \rho$
  the estimates
  \begin{gather*}
    \|(A-\lambda)^{-1}\|_{H\to H_1\ads}\leq M_1, \qquad
    \|(A-\lambda)^{-1}\|_{H_{-1}\to H}\leq M_1,\\
    \|(A^*-\lambda)^{-1}\|_{H\to H_1}\leq M_1, \qquad
    \|(A^*-\lambda)^{-1}\|_{H_{-1}\ads\to H}\leq M_1
  \end{gather*}
  hold where $M_1=M\left(\frac{1}{\rho}+1\right)+1$.
\end{lemma}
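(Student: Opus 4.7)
The plan is to split the four bounds into two pairs: the $H\to H_1\ads$ and $H\to H_1$ estimates come from a direct calculation with the resolvent identity, and the $H_{-1}\to H$ and $H_{-1}\ads\to H$ estimates then follow by a scale-adjoint argument that uses the norm equality $\|B\|=\|B^*\|$ recalled in Section~\ref{sec:scale}.

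For $\|(A^*-\lambda)^{-1}\|_{H\to H_1}\leq M_1$ I would use that $H_1=\mdef(A^*)$ carries the graph norm, so $\|z\|_1\leq \|z\|+\|A^*z\|$ for every $z\in H_1$. Combining the identity $A^*(A^*-\lambda)^{-1}=I+\lambda(A^*-\lambda)^{-1}$ with the quasi-sectorial bound $\|(A^*-\lambda)^{-1}\|_{L(H)}\leq M/|\lambda|$ (valid because $A^*$ is quasi-sectorial with the same constants as $A$) and the hypothesis $|\lambda|\geq\rho$, I obtain for every $x\in H$
\[\|(A^*-\lambda)^{-1}x\|_1\leq \frac{M}{\rho}\|x\|+(1+M)\|x\|=M_1\|x\|.\]
The bound $\|(A-\lambda)^{-1}\|_{H\to H_1\ads}\leq M_1$ is completely analogous, with $\|z\|_1\ads\leq\|z\|+\|Az\|$ and $A(A-\lambda)^{-1}=I+\lambda(A-\lambda)^{-1}$ replacing their counterparts.

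For the remaining two bounds I would exploit the fact that the scale-adjoint preserves the operator norm. The extension $A-\lambda:H\to H_{-1}$ was constructed in Section~\ref{sec:scale} as the scale-adjoint of $A^*-\bar\lambda:H_1\to H$; since the scale-adjoint commutes with inversion (i.e., $(C^{-1})^*=(C^*)^{-1}$ for isomorphisms), the operator $(A-\lambda)^{-1}:H_{-1}\to H$ is itself the scale-adjoint of $(A^*-\bar\lambda)^{-1}:H\to H_1$. Hence
\[\|(A-\lambda)^{-1}\|_{H_{-1}\to H}=\|(A^*-\bar\lambda)^{-1}\|_{H\to H_1}\leq M_1,\]
where the final inequality is the already established third bound applied at $\bar\lambda$, which also belongs to $\Sigma_{\frac\pi2+\theta}$ with $|\bar\lambda|\geq\rho$. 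The fourth estimate $\|(A^*-\lambda)^{-1}\|_{H_{-1}\ads\to H}\leq M_1$ is obtained from the first in the same manner. I do not anticipate a serious obstacle; the only point requiring a moment of care is the identification of $(A-\lambda)^{-1}:H_{-1}\to H$ as the scale-adjoint of $(A^*-\bar\lambda)^{-1}:H\to H_1$, which is routine bookkeeping with the defining properties of scale-adjoints recalled in Section~\ref{sec:scale}.
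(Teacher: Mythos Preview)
Your proposal is correct and follows essentially the same route as the paper: a direct graph-norm estimate using $A(A-\lambda)^{-1}=I+\lambda(A-\lambda)^{-1}$ together with $|\lambda|\geq\rho$ gives the $H\to H_1\ads$ and $H\to H_1$ bounds, and the remaining two follow by the scale-adjoint identity $(C^*)^{-1}=(C^{-1})^*$ and $\|B\|=\|B^*\|$ applied at $\bar\lambda\in\Sigma_{\frac\pi2+\theta}$. The only cosmetic difference is that the paper starts with $(A-\lambda)^{-1}$ and you start with $(A^*-\lambda)^{-1}$.
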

\begin{proof}
  For $x\in H$ we have
  \begin{align*}
    \|(A-\lambda)^{-1}x\|_1\ads
    &\leq\|(A-\lambda)^{-1}x\|+\|A(A-\lambda)^{-1}x\|\\
    &\leq\|(A-\lambda)^{-1}x\|+\|x\|+|\lambda|\|(A-\lambda)^{-1}x\|\\
    &\leq\left(\frac{M}{|\lambda|}+1+M\right)\|x\|
    \leq\left(\frac{M}{\rho}+1+M\right)\|x\|
  \end{align*}
  and hence $\|(A-\lambda)^{-1}\|_{H\to H_1\ads}\leq M_1$.
  Since the adjoint of $(A-\bar\lambda)^{-1}:H\to H_1\ads$
  with respect to the scale $\{H_s\ads\}$ is
  $(A^*-\lambda)^{-1}:H_{-1}\ads\to H$, see Section~\ref{sec:scale},
  we also get
  \[\|(A^*-\lambda)^{-1}\|_{H_{-1}\ads\to H}
  =\|(A-\bar\lambda)^{-1}\|_{H\to H_1\ads}\leq M_1.\]
  Note here that if $\lambda$ belongs to $\Sigma_{\frac\pi2+\theta}$
  then so does $\bar\lambda$.
  The other estimates follow by interchanging the roles of $A$ and $A^*$.
\end{proof}

\begin{coroll}\label{cor:sectinterp}
  Let $A$ be quasi-sectorial, $\theta,M,\rho$ as above.
  Let $r,s\geq0$ with $r+s\leq1$.
  Then for $\lambda\in\Sigma_{\frac\pi2+\theta}$, $|\lambda|\geq \rho$:
  \begin{gather*}
    \|(A-\lambda)^{-1}\|_{H_{-r}\to H_s\ads}\leq \frac{M_2}{|\lambda|^{1-r-s}}, \qquad
    \|(A^*-\lambda)^{-1}\|_{H_{-s}\ads\to H_r}\leq \frac{M_2}{|\lambda|^{1-r-s}}.
  \end{gather*}
  The constant $M_2$ depends on $M,\rho,r,s$ only.
\end{coroll}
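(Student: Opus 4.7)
The plan is to apply the Heinz interpolation theorem of Section~\ref{sec:scale} in two stages, using as endpoints the estimates of Lemma~\ref{lem:sectinterp} together with the quasi-sectoriality bound $\|(A-\lambda)^{-1}\|_{L(H)}\leq M/|\lambda|$ from \eqref{eq:quasisect}.

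In the first stage I would prove the edge estimate
\[
\|(A-\lambda)^{-1}\|_{H\to H_s\ads}\leq M_1^s M^{1-s}/|\lambda|^{1-s}, \qquad s\in[0,1],
\]
by interpolating only the target between $H\to H$ and $H\to H_1\ads$. This is either a direct application of Heinz with trivial source scale, or follows elementarily from the moment inequality $\|\Lambda_*^s y\|\leq \|y\|^{1-s}\|\Lambda_* y\|^s$ for $y\in H_1\ads$, a consequence of the spectral theorem for $\Lambda_*$ and H\"older's inequality. The symmetric argument, with the roles of $A$ and $A^*$ exchanged, yields the companion edge bound $\|(A-\lambda)^{-1}\|_{H_{-r}\to H}\leq M_1^r M^{1-r}/|\lambda|^{1-r}$.

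In the second stage I would cover the interior of the triangle $\{(r,s):\,r,s\geq0,\,r+s\leq1\}$ by a single genuine two-variable Heinz interpolation between the endpoints $(A-\lambda)^{-1}:H_{-1}\to H$ (bound $M_1$, from Lemma~\ref{lem:sectinterp}) and $(A-\lambda)^{-1}:H\to H_{s^*}\ads$ (bound from stage one), where $s^*\in(0,1]$ is an auxiliary parameter. For given $(r,s)$ with $r,s>0$ and $r+s\leq1$, I would choose interpolation parameter $\mu=r$ and $s^*=s/(1-r)$; the constraint $r+s\leq 1$ is precisely what ensures $s^*\in(0,1]$. A short computation then shows that the interpolated source is $H_{-r}$, the interpolated target is $H_s\ads$, the exponent of $|\lambda|$ comes out to $(1-\mu)(1-s^*)=1-r-s$, and the constants combine to $M_1^{r+s}M^{1-r-s}$, which is a valid choice for $M_2$. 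The remaining boundary cases of the triangle are covered directly by stage one and its symmetric version. The companion estimate for $(A^*-\bar\lambda)^{-1}$ follows by exchanging the roles of $A$ and $A^*$ and of the two Hilbert scales throughout.

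The main obstacle is geometric rather than technical: a single Heinz interpolation between the corner estimate at $(r,s)=(0,0)$ and an interior point is impossible, because moving from $(0,0)$ into the triangle \emph{enlarges} the source space while \emph{shrinking} the target, violating the simultaneous monotonicity $r_1<r_2$, $s_1<s_2$ that Heinz requires. The two-stage procedure is essentially forced by this obstruction: only after stage one has produced an endpoint whose target is smaller (at the cost of fixing the source at $H$) can a valid Heinz interpolation be performed against the endpoint $H_{-1}\to H$, since only then do both source and target change in the permitted direction.
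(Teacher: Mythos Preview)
Your proposal is correct and follows essentially the same two-stage Heinz interpolation scheme as the paper, arriving at the identical constant $M_2=M_1^{r+s}M^{1-r-s}$. The only cosmetic difference is the choice of intermediate endpoints: the paper first interpolates to $H\to H_{r+s}\ads$ and $H_{-r-s}\to H$ and then between these with parameter $\tau=r/(r+s)$, whereas you keep one endpoint fixed at $H_{-1}\to H$ and move the other to $H\to H_{s^*}\ads$ with $s^*=s/(1-r)$; both paths through the interpolation diagram are equally valid.
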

\begin{proof}
  We apply interpolation to the results of Lemma~\ref{lem:sectinterp}.
  As a first step we get
  \begin{align*}
    \|(A-\lambda)^{-1}\|_{H\to H_{r+s}\ads}
    &\leq\|(A-\lambda)^{-1}\|_{H\to H_1\ads}^{r+s}\|(A-\lambda)^{-1}\|_{H\to H}^{1-r-s}\\
    &\leq M_1^{r+s}\left(\frac{M}{|\lambda|}\right)^{1-r-s}
    =\frac{M_2}{|\lambda|^{1-r-s}}
  \end{align*}
  with $M_2=M_1^{r+s}M^{1-r-s}$
  and similarly
  \[\|(A-\lambda)^{-1}\|_{H_{-r-s}\to H}\leq \frac{M_2}{|\lambda|^{1-r-s}}.\]
  From this we obtain
  with $\tau=\frac{r}{r+s}$
  \begin{align*}
    \|(A-\lambda)^{-1}\|_{H_{-r}\to H_s\ads}
    &\leq\|(A-\lambda)^{-1}\|_{H_{-r-s}\to H}^\tau\|(A-\lambda)^{-1}\|_{H\to H_{r+s}\ads}^{1-\tau}\\
    &\leq\frac{M_2}{|\lambda|^{1-r-s}}.
  \end{align*}
  The estimates for $\|(A^*-\lambda)^{-1}\|_{H_{-s}\ads\to H_r}$ are again analogous.
\end{proof}

\begin{lemma}\label{lem:T0pert}
  Let $A$ be quasi-sectorial, let $\theta,\rho$ be the constants from \eqref{eq:quasisect}.
  Suppose that $r+s<1$.
  Then there exists $\rho_1\geq \rho$ and $c_0,c_1>0$ such that
  $\Omega_\theta\setminus B_{\rho_1}(0)\subset\varrho(T_0)$
  and
  \begin{gather}
    \label{eq:T0est}
    \|(T_0-\lambda)^{-1}\|_{L(V_0)}\leq\frac{c_0}{|\lambda|},\\
    \label{eq:T0S0diffest}
    \|(T_0-\lambda)^{-1}-(S_0-\lambda)^{-1}\|_{L(V_0)}\leq\frac{c_1}{|\lambda|^{2-r-s}}
  \end{gather}
  for all $\lambda\in\Omega_\theta$, $|\lambda|\geq \rho_1$.
\end{lemma}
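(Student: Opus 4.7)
The plan is to treat $R$ as a perturbation of $S_0$ which becomes small, in the appropriate operator topology, once $|\lambda|$ is large. More precisely, I will invert $T_0-\lambda = (S_0-\lambda) + R$ by the ``right-hand'' Neumann series
\[
  (T_0-\lambda)^{-1} = (S_0-\lambda)^{-1}\bigl(I+R(S_0-\lambda)^{-1}\bigr)^{-1},
\]
and then read off the two estimates directly from this formula. The crucial feature that makes the perturbation argument work is that $R$ is not bounded on $V_0$, but it is bounded from $V_1$ into $V_0$, so one needs a smoothing factor $(S_0-\lambda)^{-1}\colon V_0\to V_1$ whose norm decays as $|\lambda|\to\infty$.

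First I would observe that $S_0$ is diagonal, so by \eqref{eq:S0est} together with Corollary~\ref{cor:sectinterp} applied componentwise one obtains, for all $\lambda\in\Omega_\theta$ with $|\lambda|\geq\rho$,
\[
  \|(S_0-\lambda)^{-1}\|_{L(V_0)}\leq\frac{M}{|\lambda|},\qquad
  \|(S_0-\lambda)^{-1}\|_{V_0\to V_1}\leq\frac{M_2}{|\lambda|^{1-r-s}}.
\]
Combining the second estimate with $R\in L(V_1,V_0)$ gives
\[
  \|R(S_0-\lambda)^{-1}\|_{L(V_0)}\leq\frac{\|R\|_{V_1\to V_0}\,M_2}{|\lambda|^{1-r-s}}.
\]
Here the assumption $r+s<1$ is used decisively: the exponent $1-r-s$ is strictly positive, so one can choose $\rho_1\geq\rho$ large enough to guarantee $\|R(S_0-\lambda)^{-1}\|_{L(V_0)}\leq\tfrac12$ whenever $\lambda\in\Omega_\theta$, $|\lambda|\geq\rho_1$. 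For such $\lambda$ the Neumann series produces a bounded inverse of $I+R(S_0-\lambda)^{-1}$ on $V_0$ with norm at most $2$.

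Next I would verify that the formula above really yields the resolvent of $T_0$. Set $x=(S_0-\lambda)^{-1}(I+R(S_0-\lambda)^{-1})^{-1}y$ for $y\in V_0$; then $x\in\mdef(S_0)=\mdef(T_0)$ and a short computation shows $(T_0-\lambda)x=y$, giving surjectivity. For injectivity, if $x\in\mdef(T_0)$ satisfies $(T_0-\lambda)x=0$, then $x\subset V_1$, $Rx\in V_0$, and $(I+R(S_0-\lambda)^{-1})(S_0-\lambda)x=0$, hence $(S_0-\lambda)x=0$ and $x=0$. Thus $\Omega_\theta\setminus B_{\rho_1}(0)\subset\varrho(T_0)$ and the resolvent formula holds. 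Estimating it yields
\[
  \|(T_0-\lambda)^{-1}\|_{L(V_0)}\leq\frac{M}{|\lambda|}\cdot 2=\frac{c_0}{|\lambda|},
\]
which is \eqref{eq:T0est}.

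Finally, for \eqref{eq:T0S0diffest} I would use the algebraic identity $(I+X)^{-1}-I=-X(I+X)^{-1}$ with $X=R(S_0-\lambda)^{-1}$ to write
\[
  (T_0-\lambda)^{-1}-(S_0-\lambda)^{-1}
  =-(S_0-\lambda)^{-1}R(S_0-\lambda)^{-1}\bigl(I+R(S_0-\lambda)^{-1}\bigr)^{-1},
\]
then bound the three ``inner'' factors by $\tfrac{M}{|\lambda|}$, $\|R\|_{V_1\to V_0}\tfrac{M_2}{|\lambda|^{1-r-s}}$, and $2$ respectively, using once more the mapping $(S_0-\lambda)^{-1}\colon V_0\to V_1$ to absorb $R$. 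This gives \eqref{eq:T0S0diffest} with $c_1=2MM_2\|R\|_{V_1\to V_0}$. The only real subtlety is the careful bookkeeping of the three different spaces $V_0,V_1$ and $\mdef(T_0)$; once Corollary~\ref{cor:sectinterp} is invoked to generate the additional $|\lambda|^{-(1-r-s)}$ factor, both estimates fall out of the standard Neumann perturbation argument.
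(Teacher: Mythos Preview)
Your argument is correct and coincides with the paper's proof: both use the factorisation $T_0-\lambda=(I+R(S_0-\lambda)^{-1})(S_0-\lambda)$, invoke Corollary~\ref{cor:sectinterp} for the decay $\|(S_0-\lambda)^{-1}\|_{V_0\to V_1}\leq M_2|\lambda|^{-(1-r-s)}$, and choose $\rho_1$ so that the Neumann series converges with $\|(I+R(S_0-\lambda)^{-1})^{-1}\|\leq 2$. For \eqref{eq:T0S0diffest} the paper writes the second resolvent identity $(S_0-\lambda)^{-1}-(T_0-\lambda)^{-1}=(T_0-\lambda)^{-1}R(S_0-\lambda)^{-1}$ directly, whereas you expand $(I+X)^{-1}-I=-X(I+X)^{-1}$; these are the same formula, and both yield $c_1=2MM_2\|R\|$.
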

\begin{proof}
  This is a standard perturbation argument for $T_0=S_0+R$ on $V_0$:
  For $\lambda\in\varrho(S_0)$, the identity
  \[T_0-\lambda=\bigl(I-R(S_0-\lambda)^{-1}\bigr)(S_0-\lambda)\]
  holds.
  Corollary~\ref{cor:sectinterp} implies that
  \[\|(S_0-\lambda)^{-1}\|_{L(V_0,V_1)}\leq\frac{M_2}{|\lambda|^{1-r-s}}, \qquad
  \lambda\in\Omega_\theta,|\lambda|\geq\rho.\]
  Since $\|R(S_0-\lambda)^{-1}\|_{L(V_0)}\leq\|R\|\|(S_0-\lambda)^{-1}\|_{L(V_0,V_1)}$
  and $1-r-s>0$, it follows that
  there exists $\rho_1\geq\rho$ such that
  \[\|R(S_0-\lambda)^{-1}\|_{L(V_0)}\leq\frac12
  \qquad\text{for all }\lambda\in\Omega_\theta,\,|\lambda|\geq \rho_1.\]
  Hence $I-R(S_0-\lambda)^{-1}$ is an isomorphism on $V_0$
  and thus $\lambda\in\varrho(T_0)$ with
  \begin{equation}\label{eq:T0resolvpert}
    (T_0-\lambda)^{-1}=(S_0-\lambda)^{-1}\bigl(I-R(S_0-\lambda)^{-1}\bigr)^{-1}
  \end{equation}
  and
  \[\|(T_0-\lambda)^{-1}\|_{L(V_0)}\leq\|(S_0-\lambda)^{-1}\|_{L(V_0)}
  \bigl\|\bigl(I-R(S_0-\lambda)^{-1}\bigr)^{-1}\bigr\|_{L(V_0)}
  \leq\frac{2M}{|\lambda|}\]
  for $\lambda\in\Omega_\theta$, $|\lambda|\geq\rho_1$.
  Moreover
  \begin{equation}\label{eq:T0S0diff}
    (S_0-\lambda)^{-1}-(T_0-\lambda)^{-1}=(T_0-\lambda)^{-1}R(S_0-\lambda)^{-1}
  \end{equation}
  which implies
  \begin{align*}
    \|(S_0-\lambda)^{-1}-(T_0-\lambda)^{-1}\|_{L(V_0)}
    &\leq\|(T_0-\lambda)^{-1}\|_{L(V_0)}\|R\|\|(S_0-\lambda)^{-1}\|_{L(V_0,V_1)}\\
    &\leq\frac{2M\|R\|M_2}{|\lambda|^{2-r-s}}.
  \end{align*}
\end{proof}

\begin{lemma}\label{lem:SQpm}
  Let $A$ be quasi-sectorial and let $Q_{0\pm}\in L(V_0)$ be the projections
  \begin{equation}\label{eq:Qpm}
    Q_{0-}=\pmat{I&0\\0&0},\qquad Q_{0+}=\pmat{0&0\\0&I}.
  \end{equation}
  Consider the integration contours $\gamma_1(t)=it$,
  $t\in\,]-\infty,-\rho]\cup[\rho,\infty[\,$
  as well as
  $\gamma_{0+}(t)=\rho e^{it}$, $t\in[-\frac\pi2,\frac\pi2]$
  and
  $\gamma_{0-}(t)=\rho e^{-it}$, $t\in[\frac\pi2,\frac{3\pi}{2}]$
  where $\rho$ is the constant from \eqref{eq:quasisect} for $A$.
  Then
  \[Q_{0+}v-Q_{0-}v=\frac{1}{\pi i}\int_{\gamma_1}'(S_0-\lambda)^{-1}v\,d\lambda
  +Kv, \qquad v\in V_0,\]
  where the prime denotes the Cauchy principal value at infinity and
  $K\in L(V_0)$ is given by
  $K=\smallmat{K_1&0\\0&K_2}$ with
  \[K_1=\frac{1}{\pi i}\int_{\gamma_{0+}}(A-\lambda)^{-1}\,d\lambda, \quad
  K_2=\frac{1}{\pi i}\int_{\gamma_{0-}}(-A^*-\lambda)^{-1}\,d\lambda.\]
\end{lemma}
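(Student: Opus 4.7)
First I observe that $K\in L(V_0)$ is well defined: the quasi-sectorial estimate shows that $\gamma_{0+}\subset\overline{\Sigma_{\pi/2+\theta}\setminus B_\rho(0)}\subset\varrho(A)$, and since $A^*$ is quasi-sectorial with the same constants and $-\gamma_{0-}=\gamma_{0+}$ as a set, also $\gamma_{0-}\subset\varrho(-A^*)$; on both compact arcs the resolvent is continuous and uniformly bounded. Since $(S_0-\lambda)^{-1}$ is block diagonal, the asserted identity decouples into
\[\frac{1}{\pi i}\int_{\gamma_1}'(A-\lambda)^{-1}x\,d\lambda+K_1 x=-x\;(x\in H_{-r}),\qquad\frac{1}{\pi i}\int_{\gamma_1}'(-A^*-\lambda)^{-1}y\,d\lambda+K_2 y=y\;(y\in H_{-s}\ads);\]
I will establish the first, the second being strictly analogous.

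For $R>\rho$, I close the truncated path $(\gamma_1\cap\{|\lambda|\leq R\})+\gamma_{0+}$ by the right semicircle $C_R^+$ of radius $R$ traversed from $iR$ through $R$ down to $-iR$, yielding a closed contour which bounds the half-annulus $R_\mathrm{right}=\{\rho<|\lambda|<R,\,\Re\lambda>0\}$. Because $\sigma(A)\cap\Sigma_{\pi/2+\theta}\subset\overline{B_\rho(0)}$ by quasi-sectoriality, $(A-\lambda)^{-1}$ is holomorphic on a neighbourhood of $\overline{R_\mathrm{right}}$, and Cauchy's theorem gives
\[\int_{\gamma_1\cap\{|\lambda|\leq R\}}(A-\lambda)^{-1}x\,d\lambda+\pi i\,K_1 x+\int_{C_R^+}(A-\lambda)^{-1}x\,d\lambda=0.\]

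The heart of the argument is the limit of the arc integral as $R\to\infty$. Parametrising $\lambda=Re^{i\theta}$ on $C_R^+$ gives
\[\int_{C_R^+}(A-\lambda)^{-1}x\,d\lambda=i\int_{\pi/2}^{-\pi/2}\lambda(A-\lambda)^{-1}x\,d\theta.\]
The identity $\lambda(A-\lambda)^{-1}x+x=(A-\lambda)^{-1}Ax$ together with the quasi-sectorial bound yields $\lambda(A-\lambda)^{-1}x\to-x$ in $H_{-r}$ as $|\lambda|\to\infty$ within $\Sigma_{\pi/2+\theta}$ for $x\in\mdef(A)$; the uniform bound $\|\lambda(A-\lambda)^{-1}\|\leq M$ combined with the density of $\mdef(A)$ in $H_{-r}$ then extends this strong convergence to every $x\in H_{-r}$. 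Dominated convergence, with dominating constant $M\|x\|$, thus produces $\int_{C_R^+}(A-\lambda)^{-1}x\,d\lambda\to i\pi x$; inserting this into the Cauchy identity and dividing by $\pi i$ yields the first block identity. The second block identity, with its opposite sign $+y$, follows by the same argument applied to $-A^*$, whose spectrum lies in the mirror sector: the detour $\gamma_{0-}$ together with a large left semicircle $C_R^-$ encloses the analogous half-annulus on the opposite side with reversed orientation, and the corresponding dominated-convergence limit now contributes $-i\pi y$, producing the sign flip. The main technical ingredient is thus the strong approximation-of-identity property $\lambda(A-\lambda)^{-1}x\to-x$; granted this, the rest is routine Cauchy plus dominated convergence, with the indentations $\gamma_{0\pm}$ serving precisely to detour around whatever spectrum of $A$ or $-A^*$ may lie inside $B_\rho(0)$.
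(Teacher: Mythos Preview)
Your proof is correct and follows essentially the same strategy as the paper's: reduce to the two diagonal blocks and use Cauchy's theorem together with the resolvent decay on large arcs. The only difference is organizational: the paper first invokes \cite[Lemma~6.1]{langer-ran-rotten} to obtain $\frac{1}{\pi i}\int_{-i\infty}^{i\infty\,\prime}(A-\rho-\lambda)^{-1}x\,d\lambda=-x$ on a shifted vertical line and then deforms that line to $\gamma_1+\gamma_{0+}$, whereas you close the half-annulus directly with a large semicircle and extract the same contribution via the strong limit $\lambda(A-\lambda)^{-1}x\to -x$ --- which is precisely what that cited lemma encodes, so your argument is a self-contained unfolding of the paper's.
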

\begin{proof}
  We consider $A$ as an operator on $H_{-r}$.
  Since $A-\rho$ is sectorial and $0\in\varrho(A-\rho)$,
  \[\frac{1}{\pi i}\int_{-i\infty}^{i\infty\,\prime}(A-\rho-\lambda)^{-1}x
  \,d\lambda=-x, \qquad x\in H_{-r}, \]
  holds by \cite[Lemma~6.1]{langer-ran-rotten}.
  Using Cauchy's theorem in conjunction with
  the resolvent decay of $A$ to alter the integration contour, we obtain
  \begin{align*}
    -x
    &=\frac{1}{\pi i}\int_{\rho-i\infty}^{\rho+i\infty\,\prime}(A-\lambda)^{-1}x\,d\lambda\\
    &=\frac{1}{\pi i}\int_{\gamma_1}'(A-\lambda)^{-1}x\,d\lambda
    +\frac{1}{\pi i}\int_{\gamma_{0+}}(A-\lambda)^{-1}x\,d\lambda,
    \qquad x\in H_{-r}.
  \end{align*}
  Looking at $-A^*$, we get
  \[\frac{1}{\pi i}\int_{-i\infty}^{i\infty\,\prime}(-A^*+\rho-\lambda)^{-1}y
  \,d\lambda=y, \qquad y\in H_{-s}\ads, \]
  and hence
  \[y=\frac{1}{\pi i}\int_{\gamma_1}'(-A^*-\lambda)^{-1}y\,d\lambda
  +\frac{1}{\pi i}\int_{\gamma_{0-}}(-A^*-\lambda)^{-1}y\, d\lambda,
  \qquad y\in H_{-s}\ads.
  \]
  Combining both identities and noting that
  $Q_{0+}v-Q_{0-}v=(-x,y)$ for $v=(x,y)$, we obtain the claim.
\end{proof}

\begin{theo}\label{theo:T0dichot}
  Let $A$ be quasi-sectorial and let $r+s<1$.
  If $\sigma(A)\cap i\R=\varnothing$ or if $A$ has a compact resolvent and
  \begin{equation}\label{eq:T0dichot-cond}
      \ker(A-it)\cap\ker C=\ker(A^*+it)\cap\ker B^*=\{0\}
      \quad\text{for all}\quad t\in\R,
  \end{equation}
  then the Hamiltonian $T_0$ is bisectorial and strictly dichotomous.
\end{theo}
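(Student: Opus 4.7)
\emph{Proof sketch.}
The plan is to first establish that $T_0$ is bisectorial, then invoke Theorem~\ref{theo:sectdichot} to obtain closed spectral projections $P_\pm$ and invariant subspaces $V_{0\pm}$, and finally upgrade to strict dichotomy by identifying $P_+-P_-$ with a bounded operator via the integral representations for $(T_0-\lambda)^{-1}$ and $(S_0-\lambda)^{-1}$.

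First, I verify $i\R\subset\varrho(T_0)$. Lemma~\ref{lem:T0pert} already places $\Omega_\theta\setminus B_{\rho_1}(0)\subset\varrho(T_0)$ with $\|(T_0-\lambda)^{-1}\|\leq c_0/|\lambda|$, so the only issue is a bounded portion of $i\R$. Under~(b), Lemma~\ref{lem:ham-cptresolv} makes the resolvent compact, whence $\sigma(T_0)=\sigma_p(T_0)$, and Lemma~\ref{lem:pointgap} together with \eqref{eq:T0dichot-cond} removes all eigenvalues on $i\R$. Under~(a), Lemma~\ref{lem:apprgap} gives $\sigma_\mathrm{app}(T_0)\cap i\R=\varnothing$; since the connected set $i\R$ is partitioned by the two disjoint open sets $\varrho(T_0)\cap i\R$ and $\inter(\sigma(T_0))\cap i\R$ and meets the former nontrivially, it must lie entirely in $\varrho(T_0)$. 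Combining continuity of the resolvent on the compact segment $\{\lambda\in i\R:|\lambda|\leq\rho_1\}$ with Lemma~\ref{lem:T0pert} then yields $\|(T_0-\lambda)^{-1}\|_{L(V_0)}\leq M/|\lambda|$ for all $\lambda\in i\R\setminus\{0\}$, so $T_0$ is bisectorial. Because additionally $0\in\varrho(T_0)$, $T_0$ is also almost bisectorial, and Theorem~\ref{theo:sectdichot} supplies closed complementary projections $P_\pm=T_0L_\pm$ and closed invariant subspaces $V_{0\pm}$ with $\sigma(T_0|_{V_{0\pm}})\subset\C_\pm$.

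Next, for $v\in\mdef(T_0)$ I use \eqref{eq:dichotint2} to write $P_+v-P_-v=\frac{1}{\pi i}\int_{-i\infty}^{i\infty\,\prime}(T_0-\lambda)^{-1}v\,d\lambda$ and split the contour into the compact piece $[-i\rho,i\rho]$ and its complement $\gamma_1$. On $\gamma_1$ I insert the resolvent of $S_0$:
\begin{equation*}
\int_{\gamma_1}'(T_0-\lambda)^{-1}v\,d\lambda=\int_{\gamma_1}'\bigl[(T_0-\lambda)^{-1}-(S_0-\lambda)^{-1}\bigr]v\,d\lambda+\int_{\gamma_1}'(S_0-\lambda)^{-1}v\,d\lambda.
\end{equation*}
The compact portion is trivially a bounded operator by continuity of the resolvent on a compact set; the difference integral converges absolutely in $L(V_0)$ thanks to \eqref{eq:T0S0diffest}, where the strict inequality $r+s<1$ furnishes the crucial decay $|\lambda|^{-(2-r-s)}$ with exponent $>1$; and the remaining $S_0$-integral equals $\pi i(Q_{0+}-Q_{0-}-K)$ by Lemma~\ref{lem:SQpm}, which is bounded. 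Hence there exists $D\in L(V_0)$ with $(P_+-P_-)v=Dv$ for every $v\in\mdef(T_0)$, so by density $P_+-P_-$ extends to $D$.

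Finally, on $\mdef(P_\pm)=V_{0+}\oplus V_{0-}$ the relations $P_++P_-=I$ and $P_+-P_-=D$ give $P_\pm=\frac12(I\pm D)$; I take this as the definition of bounded candidates $\tilde P_\pm\in L(V_0)$. Approximating $v\in V_0$ by $v_n\in\mdef(T_0)$ and using closedness of $V_{0\pm}$ shows $\tilde P_\pm v\in V_{0\pm}$, while the identity $\tilde P_+v+\tilde P_-v=v$ gives $V_0=V_{0+}\oplus V_{0-}$. Therefore $\mdef(P_\pm)=V_0$ and $P_\pm\in L(V_0)$, so $T_0$ is strictly dichotomous by the last clause of Theorem~\ref{theo:sectdichot}. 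The main obstacle is the integral identification step: getting absolute convergence of the $(T_0-\lambda)^{-1}-(S_0-\lambda)^{-1}$ integral on $\gamma_1$ depends entirely on the perturbative improvement in \eqref{eq:T0S0diffest}, and the borderline case $r+s=1$ would break the argument.
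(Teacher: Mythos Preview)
Your argument is correct and follows essentially the same route as the paper: establish $i\R\subset\varrho(T_0)$ via Lemmas~\ref{lem:apprgap}/\ref{lem:pointgap} and~\ref{lem:T0pert}, deduce bisectoriality, apply Theorem~\ref{theo:sectdichot}, and then show $P_+-P_-$ is bounded by splitting the principal-value integral into the compact segment, the absolutely convergent difference integral governed by \eqref{eq:T0S0diffest}, and the $S_0$-piece handled by Lemma~\ref{lem:SQpm}. The only cosmetic difference is in the closing step: the paper observes directly that $P_{0+}-P_{0-}=2P_{0+}-I$ is closed (since $P_{0+}$ is) and coincides with the bounded operator $D$ on the dense set $\mdef(T_0)$, whence $\mdef(P_{0\pm})=V_0$; your explicit construction of $\tilde P_\pm=\tfrac12(I\pm D)$ and the approximation argument achieve the same conclusion with a little more work.
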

\begin{proof}
  We first show that $i\R\subset\varrho(T_0)$.
  If $\sigma(A)\cap i\R=\varnothing$, then
  Lemma~\ref{lem:apprgap} implies
  $\sigma_\mathrm{app}(T_0)\cap i\R=\varnothing$.
  Since $\partial\sigma(T_0)\subset\sigma_\mathrm{app}(T_0)$ and
  since $i\R\cap\varrho(T_0)\neq\varnothing$ by Lemma~\ref{lem:T0pert}
  it follows that $i\R\subset\varrho(T_0)$.
  Suppose on the other hand that $A$ has a compact resolvent
  and that \eqref{eq:T0dichot-cond} holds.
  By Lemma~\ref{lem:ham-cptresolv}
  $T_0$ has a compact resolvent too
  and therefore $\sigma(T_0)=\sigma_p(T_0)$.
  Lemma~\ref{lem:pointgap} then implies $\sigma(T_0)\cap i\R=\varnothing$.

  From $i\R\subset\varrho(T_0)$ and the estimate \eqref{eq:T0est}
  we obtain that $T_0$ is bisectorial.
  In particular Theorem~\ref{theo:sectdichot} can be applied to $T_0$
  and yields corresponding closed projections on $V_0$, which we
  denote by $P_{0\pm}$.
  By Lemma~\ref{lem:SQpm} the mapping
  \[v\mapsto\frac{1}{\pi i}\int_{\gamma_1}'(S_0-\lambda)^{-1}v\,d\lambda,
  \qquad v\in V_0,\]
  defines a bounded operator in $L(V_0)$.
  In view of \eqref{eq:T0S0diffest} the integral
  \[\int_{\gamma_1}(T_0-\lambda)^{-1}-(S_0-\lambda)^{-1}\,d\lambda\]
  converges in $L(V_0)$.
  Consequently
  $v\mapsto\frac{1}{\pi i}\int_{\gamma_1}'(T_0-\lambda)^{-1}v\,d\lambda$
  and hence also
  \[v\mapsto\frac{1}{\pi i}\int_{-i\infty}^{i\infty\,\prime}(T_0-\lambda)^{-1}v\,d\lambda,
  \qquad v\in V_0,\]
  defines a bounded operator in $L(V_0)$.
  By \eqref{eq:dichotint2} this last operator coincides with $P_{0+}-P_{0-}$ on $\mdef(T_0)$.
  Since $P_{0+}-P_{0-}$ is  closed and $\mdef(T_0)$ is dense in $V_0$,
  we conclude that $\mdef(P_{0\pm})=V_0$ and hence $P_{0\pm}\in L(V_0)$ by the closed
  graph theorem.
  Therefore $T_0$ is strictly dichotomous.  
\end{proof}

\begin{remark}
  Combining the results from Lemma~\ref{lem:T0pert} with the dichotomy of $T_0$
  from Theorem~\ref{theo:T0dichot}
  we find that in fact
  \[\bigl(\Omega_\theta\setminus B_{\rho_1}(0)\bigr)\cup\bigset{\lambda\in\C}{|\lambda|\leq h}
  \subset\varrho(T_0)\]
  where $\rho_1\geq\rho$, $h>0$, and $\theta,\rho$ are the constants from
  \eqref{eq:quasisect} corresponding to the quasi-sectoriality of $A$.
  Also note that the last proof shows that $T_0$ is bisectorial and strictly
  dichotomous whenever $r+s<1$ and $i\R\subset\varrho(T_0)$.
\end{remark}

We close this section by investigating the dichotomy properties of
the Hamiltonian on $V=H\times H$, i.e., of the operator $T$.
Let
\[S=\pmat{A&0\\0&-A^*}\]
with domain $\mdef(S)=H_1\ads\times H_1$,
considered as an unbounded operator on $V$,
i.e., $S$ is the part of $S_0$ in $V$.
Note that a decomposition similar to \eqref{eq:T0decomp} does not hold for the
operators $T$ and $S$ since
$R$ maps out of $V$ into the larger space $V_0$.
In particular we have $\mdef(T)\neq\mdef(S)$ in general.

\begin{lemma}\label{lem:Tpert}
  Let $A$ be quasi-sectorial with constants $\theta,\rho$ as in \eqref{eq:quasisect}.
  Let $r+s<1$.
  Then there exist $\rho_1\geq \rho$ and $c_0,c_1>0$ such that
  $\Omega_\theta\setminus B_{\rho_1}(0)\subset\varrho(T)$ and
  \begin{gather}
    \label{eq:Test}
    \|(T-\lambda)^{-1}\|_{L(V)}\leq\frac{c_0}{|\lambda|^\beta},\\
    \label{eq:TSdiffest}
    \|(T-\lambda)^{-1}-(S-\lambda)^{-1}\|_{L(V)}\leq\frac{c_1}{|\lambda|^{2(1-\max\{r,s\})}},
  \end{gather}
  for all $\lambda\in\Omega_\theta$, $|\lambda|\geq\rho_1$
  where
  \[\beta=\begin{cases}1,&\max\{r,s\}\leq\frac12,\\
  2(1-\max\{r,s\}),&\max\{r,s\}>\frac12.
  \end{cases}\]
\end{lemma}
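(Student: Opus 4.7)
The plan is to repeat the perturbation scheme used in the proof of Lemma~\ref{lem:T0pert}, but now in $V$ rather than $V_0$, and to replace the joint interpolation exponent $1-r-s$ by the sharper $1-\max\{r,s\}$ via componentwise Heinz estimates. The inclusion $\Omega_\theta\setminus B_{\rho_1}(0)\subset\varrho(T)$ is immediate from Lemma~\ref{lem:T0pert} combined with Lemma~\ref{lem:op2}(b), which also gives $(T-\lambda)^{-1}=(T_0-\lambda)^{-1}|_V$ and, applied to $S_0$, $(S-\lambda)^{-1}=(S_0-\lambda)^{-1}|_V$. Restricting the resolvent identity \eqref{eq:T0S0diff} to $V$ yields
\[
  (T-\lambda)^{-1}-(S-\lambda)^{-1}=-(T_0-\lambda)^{-1}R(S-\lambda)^{-1}\qquad\text{on }V,
\]
and it remains to bound the three factors on the right-hand side in the appropriate spaces.

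The key estimates are
\[
  \|(S-\lambda)^{-1}\|_{V\to V_1}\leq\frac{c}{|\lambda|^{1-\max\{r,s\}}},\qquad
  \|(T_0-\lambda)^{-1}\|_{V_0\to V}\leq\frac{c}{|\lambda|^{1-\max\{r,s\}}}.
\]
For the first, I would interpolate in the range scale $\{H_s\ads\}$ between $\|(A-\lambda)^{-1}\|_{H\to H_1\ads}\leq M_1$ from Lemma~\ref{lem:sectinterp} and $\|(A-\lambda)^{-1}\|_{L(H)}\leq M/|\lambda|$ to obtain $(A-\lambda)^{-1}\colon H\to H_s\ads$ with bound $c/|\lambda|^{1-s}$; symmetrically, $(A^*-\lambda)^{-1}\colon H\to H_r$ with bound $c/|\lambda|^{1-r}$. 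The block-diagonal form of $S$ then gives the first estimate. For the second, Heinz in the domain scale $\{H_s\}$ applied to $\|(A-\lambda)^{-1}\|_{H_{-1}\to H}\leq M_1$ and $\|(A-\lambda)^{-1}\|_{L(H)}\leq M/|\lambda|$ yields $(A-\lambda)^{-1}\colon H_{-r}\to H$ with bound $c/|\lambda|^{1-r}$, and analogously for $A^*$, so that $\|(S_0-\lambda)^{-1}\|_{V_0\to V}\leq c/|\lambda|^{1-\max\{r,s\}}$. Combining this with the Neumann-type formula $(T_0-\lambda)^{-1}=(S_0-\lambda)^{-1}(I-R(S_0-\lambda)^{-1})^{-1}$ from \eqref{eq:T0resolvpert} and the uniform $L(V_0)$-bound on the middle factor established in the proof of Lemma~\ref{lem:T0pert} upgrades the $V_0\to V$ estimate from $S_0$ to $T_0$.

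Composing these estimates together with $\|R\|_{V_1\to V_0}$ in the restricted resolvent identity produces \eqref{eq:TSdiffest}. For \eqref{eq:Test}, I would add the bound $\|(S-\lambda)^{-1}\|_{L(V)}\leq M/|\lambda|$, which follows from the quasi-sectorial resolvent estimates for $A$ and for $A^*+\lambda=-(-A^*-\lambda)$ applied with $\lambda\in\Omega_\theta$, and conclude via the triangle inequality that $\|(T-\lambda)^{-1}\|_{L(V)}\leq c_0/|\lambda|^{\min(1,\,2(1-\max\{r,s\}))}=c_0/|\lambda|^\beta$, exactly matching the case distinction in the statement. The main technical hurdle is the separate Heinz interpolation in each of the two scales $\{H_s\}$ and $\{H_s\ads\}$: treating the block components independently is precisely what replaces the coarse exponent $1-r-s$ by $1-\max\{r,s\}$ and hence is what allows the $V$-estimates to strictly improve on the corresponding $V_0$-estimates from Lemma~\ref{lem:T0pert}.
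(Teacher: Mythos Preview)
Your proposal is correct and follows essentially the same route as the paper: the paper also restricts the identity \eqref{eq:T0S0diff} to $V$, establishes the same two key estimates $\|(S-\lambda)^{-1}\|_{V\to V_1}\leq c'/|\lambda|^{1-\max\{r,s\}}$ and $\|(T_0-\lambda)^{-1}\|_{V_0\to V}\leq 2c/|\lambda|^{1-\max\{r,s\}}$ via componentwise interpolation (citing Corollary~\ref{cor:sectinterp} with one exponent set to zero, which is exactly your direct Heinz argument), upgrades from $S_0$ to $T_0$ through \eqref{eq:T0resolvpert}, and then combines with $\|(S-\lambda)^{-1}\|_{L(V)}\leq M/|\lambda|$ via the triangle inequality to obtain the case distinction for $\beta$.
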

\begin{proof}
  By Corollary~\ref{cor:sectinterp} there exist $M_2,M_2'>0$ with
  \[\|(A-\lambda)^{-1}\|_{L(H_{-r},H)}\leq\frac{M_2}{|\lambda|^{1-r}},
  \qquad
  \|(-A^*-\lambda)^{-1}\|_{L(H_{-s}\ads,H)}\leq
  \frac{M_2'}{|\lambda|^{1-s}}\]
  for all $\lambda\in\Omega_\theta$, $|\lambda|\geq\rho$.
  Since $\rho>0$ we can thus find $c>0$ such that
  \[\|(S_0-\lambda)^{-1}\|_{L(V_0,V)}\leq\frac{c}{|\lambda|^{1-\max\{r,s\}}}
  \qquad\text{for }\lambda\in\Omega_\theta,\,|\lambda|\geq\rho.\]
  Similarly there exists $c'>0$ with
  \[\|(S-\lambda)^{-1}\|_{L(V,V_1)}\leq\frac{c'}{|\lambda|^{1-\max\{r,s\}}}
  \qquad\text{for }\lambda\in\Omega_\theta,\,|\lambda|\geq\rho.\]
  Let now $\rho_1\geq\rho$ be chosen as in Lemma~\ref{lem:T0pert}
  and let $\lambda\in\Omega_\theta$, $|\lambda|\geq\rho_1$.
  Then $\lambda\in\varrho(T_0)$ and we obtain from \eqref{eq:T0resolvpert}
  that
  \begin{equation}\label{eq:T0resolvest-V}
  \begin{aligned}
    \|(T_0-\lambda)^{-1}\|_{L(V_0,V)}
    &\leq\|(S_0-\lambda)^{-1}\|_{L(V_0,V)}
    \bigl\|\bigl(I-R(S_0-\lambda)^{-1}\bigr)^{-1}\bigr\|_{L(V_0)} \\
    &\leq\frac{2c}{|\lambda|^{1-\max\{r,s\}}}
  \end{aligned}
  \end{equation}
  and consequently
  \begin{align}\label{eq:T0RSest}
    \|(T_0-\lambda)^{-1}R(S-\lambda)^{-1}\|_{L(V)}
    &\leq\|(T_0-\lambda)^{-1}\|_{L(V_0,V)}\|R\|\|(S-\lambda)^{-1}\|_{L(V,V_1)}
    \notag\\
    &\leq\frac{2cc'\|R\|}{|\lambda|^{2(1-\max\{r,s\})}}.
  \end{align}
  Lemma~\ref{lem:op2} implies that
  $\lambda\in\varrho(T)$ and $(T-\lambda)^{-1}=(T_0-\lambda)^{-1}|_V$.
  Restricting \eqref{eq:T0S0diff} to the space $V$, we get
  \begin{equation}\label{eq:TSdiff}
    (S-\lambda)^{-1}-(T-\lambda)^{-1}=(T_0-\lambda)^{-1}R(S-\lambda)^{-1}.
  \end{equation}
  Combining this with \eqref{eq:T0RSest} and 
  $\|(S-\lambda)^{-1}\|_{L(V)}\leq M/|\lambda|$,
  we obtain the desired estimates.
\end{proof}

\begin{remark}\label{rem:SQpm}
  The statement of Lemma~\ref{lem:SQpm} remains true
  if all involved operators are restricted to $V$.
  This means that
  $V_0$, $S_0$ and $Q_{0\pm}$ are replaced by
  $V$, $S$ and $Q_\pm$, respectively, where
  $Q_\pm$ are the restrictions of $Q_{0\pm}$ to $V$.
  The proof remains unchanged
  except for an adaption of  the spaces.
\end{remark}

\begin{theo}\label{theo:Tdichot}
  Let $A$ be quasi-sectorial and let $r+s<1$.
  If $\sigma(A)\cap i\R=\varnothing$ or if $A$ has a compact resolvent and
  \begin{equation*}
      \ker(A-it)\cap\ker C=\ker(A^*+it)\cap\ker B^*=\{0\}
      \quad\text{for all}\quad t\in\R,
  \end{equation*}
  then  $T$ is almost bisectorial; in particular there exist closed,
  $T$- and $(T-\lambda)^{-1}$-invariant subspaces $V_\pm\subset V$
  such that $\sigma(T|_{V_\pm})\subset\C_\pm$.
  If in addition $\max\{r,s\}<\frac12$, then $T$ is even
  bisectorial and strictly dichotomous.
\end{theo}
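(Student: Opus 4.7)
\begin{proof*}[Plan]
The plan is to deduce the almost bisectoriality, and in the strong regime the strict dichotomy, of $T$ from the corresponding facts for $T_0$ established in Theorem~\ref{theo:T0dichot}, combined with the perturbation estimates of Lemma~\ref{lem:Tpert}. Under either hypothesis on $A$, Theorem~\ref{theo:T0dichot} yields $i\R\subset\varrho(T_0)$ and, by the remark following it, even a strip $\{|\lambda|\leq h\}\subset\varrho(T_0)$ containing $0$. Lemma~\ref{lem:op2}(b) then transfers this to $T$, so $i\R\cup\{0\}\subset\varrho(T)$.

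To verify the almost bisectorial estimate on $i\R\setminus\{0\}$, I would combine two pieces. For $\lambda\in i\R$ with $|\lambda|\geq\rho_1$, the estimate \eqref{eq:Test} gives $\|(T-\lambda)^{-1}\|\leq c_0/|\lambda|^\beta$ directly. For $\lambda\in i\R$ with $0<|\lambda|<\rho_1$, the resolvent is continuous on the compact set $\{\lambda\in i\R:|\lambda|\leq\rho_1\}\subset\varrho(T)$, hence uniformly bounded by some $M'$ there, so $\|(T-\lambda)^{-1}\|\leq M'\leq M'\rho_1^\beta/|\lambda|^\beta$. Together with $0\in\varrho(T)$ and $\beta\in(0,1]$, these two bounds say exactly that $T$ is almost bisectorial. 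The existence of the closed, $T$- and $(T-\lambda)^{-1}$-invariant subspaces $V_\pm$ with $\sigma(T|_{V_\pm})\subset\C_\pm$ is then supplied by Theorem~\ref{theo:sectdichot}.

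For the second assertion, assume $\max\{r,s\}<\frac12$. Then $\beta=1$ in \eqref{eq:Test}, so the argument above yields the bisectorial bound $\|(T-\lambda)^{-1}\|\leq M/|\lambda|$ on $i\R\setminus\{0\}$, i.e.\ $T$ is bisectorial. To upgrade this to strict dichotomy I would mimic the integral argument from the second half of the proof of Theorem~\ref{theo:T0dichot}, now in the space $V$. By Remark~\ref{rem:SQpm}, the $V$-version of Lemma~\ref{lem:SQpm} shows that $v\mapsto\frac{1}{\pi i}\int_{\gamma_1}'(S-\lambda)^{-1}v\,d\lambda$ defines a bounded operator on $V$. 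The key new input is the difference estimate \eqref{eq:TSdiffest}, whose exponent $2(1-\max\{r,s\})$ exceeds $1$ under the present assumption; consequently $\int_{\gamma_1}\bigl((T-\lambda)^{-1}-(S-\lambda)^{-1}\bigr)\,d\lambda$ converges absolutely in $L(V)$. Adding the two contributions, together with the manifestly bounded finite integral across the compact arc $\{\lambda\in i\R:|\lambda|\leq\rho\}\subset\varrho(T)$, produces a bounded operator on $V$ extending $v\mapsto\frac{1}{\pi i}\int_{-i\infty}^{i\infty\,\prime}(T-\lambda)^{-1}v\,d\lambda$. By \eqref{eq:dichotint2} this extension equals $P_+-P_-$ on $\mdef(T)$, and the closed graph theorem then forces $P_\pm\in L(V)$, which is exactly strict dichotomy.

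The main obstacle is pinning down the integrability exponent in \eqref{eq:TSdiffest}: it is precisely the strict inequality $\max\{r,s\}<\frac12$ that makes $2(1-\max\{r,s\})>1$ and rescues the integral argument of Theorem~\ref{theo:T0dichot} when one passes from $V_0$ to the smaller space $V$. For $\max\{r,s\}\geq\frac12$ the corresponding integral need no longer converge absolutely in $L(V)$, in accordance with the fact that in that regime only almost bisectoriality with $\beta<1$ is obtained from \eqref{eq:Test}, and strict dichotomy of $T$ can fail.
\end{proof*}
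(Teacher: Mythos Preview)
Your proposal is correct and follows essentially the same approach as the paper's proof: transfer $i\R\subset\varrho(T_0)$ to $T$ via Lemma~\ref{lem:op2}, use the estimate \eqref{eq:Test} from Lemma~\ref{lem:Tpert} to obtain (almost) bisectoriality, invoke Theorem~\ref{theo:sectdichot} for the invariant subspaces, and in the regime $\max\{r,s\}<\frac12$ repeat the integral argument of Theorem~\ref{theo:T0dichot} in $V$ using Remark~\ref{rem:SQpm} together with the strict inequality $2(1-\max\{r,s\})>1$ in \eqref{eq:TSdiffest}. Your treatment of the small-$|\lambda|$ region and the compact arc is slightly more explicit than the paper's, but the substance is identical.
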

\begin{proof}
  From Theorem~\ref{theo:T0dichot} we know that $i\R\subset\varrho(T_0)$.
  Hence also $i\R\subset\varrho(T)$ by Lemma~\ref{lem:op2}.
  From \eqref{eq:Test} in Lemma~\ref{lem:Tpert}
  we thus conclude that $T$ is almost bisectorial
  with $0<\beta<1$ if $\max\{r,s\}>\frac12$ and bisectorial
  if $\max\{r,s\}\leq\frac12$. Note that bisectoriality implies
  almost bisectoriality
  here since $0\in\varrho(T)$.
  The existence of $V_\pm$ follows by Theorem~\ref{theo:sectdichot}.
  If now $\max\{r,s\}<\frac12$ then \eqref{eq:TSdiffest} yields
  \[\|(T-\lambda)^{-1}-(S-\lambda)^{-1}\|\leq\frac{c_1}{|\lambda|^{1+\eps}},
  \qquad\lambda\in\Omega_\theta,\,|\lambda|\geq\rho_1,\]
  with some $\eps>0$.
  In view of Remark~\ref{rem:SQpm} we can then derive in the same way
  as in the proof of Theorem~\ref{theo:T0dichot}
  that $T$ is dichotomous.
\end{proof}

\section{Graph and angular subspaces}
\label{sec:angular}

In this section we consider a Hamiltonian with quasi-sectorial $A$,
$r+s<1$, and $i\R\subset\varrho(T_0)$. From the last section
we know that then $T_0$ is bisectorial and strictly dichotomous
and $T$ is almost bisectorial.
We denote by $V_{0\pm}$ and $V_\pm$ the corresponding invariant subspaces
of $T_0$ and $T$, respectively,
and by $P_{0\pm}$ and $P_\pm$ the associated projections;
see Theorem~\ref{theo:sectdichot}.
In particular $P_{0\pm}\in L(V_0)$ while $P_\pm$ are closed operators on $V$.
The projections $P_{0\pm}$ are given by $P_{0\pm}=T_0L_{0\pm}$
where $L_{0\pm}\in L(V_0)$,
\begin{equation}\label{eq:L0pm}
  L_{0\pm}=\frac{\pm 1}{2\pi i}\int_{\pm h-i\infty}^{\pm h+i\infty}
  \frac{1}{\lambda}(T_0-\lambda)^{-1}\,d\lambda.
\end{equation}

Recall from \eqref{eq:extJinner} 
the extended indefinite inner product
$\braket{\cdot}{\cdot}$ defined on $V_1\times V_0$ as well as $V_0\times V_1$.

\begin{lemma}\label{lem:L0pm}
  The operators $L_{0\pm}$ satisfy
  $L_{0\pm}\in L(V_0,V_1)$ and 
  \[\braket{L_{0+}v}{w}=-\braket{v}{L_{0-}w}
  \qquad\text{for all }v,w\in V_0.\]
\end{lemma}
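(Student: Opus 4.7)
My plan is to treat the two assertions separately: first, upgrade the convergence of the integral \eqref{eq:L0pm} from $L(V_0)$ to $L(V_0,V_1)$, and second, derive the symmetry identity from the $J$-skew-symmetry of $T_0$ in Lemma~\ref{lem:Hamsym}.

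For the first claim, I will estimate the $L(V_0,V_1)$-norm of $(T_0-\lambda)^{-1}$ along the contour $\Real\lambda=\pm h$. On the tail where $\lambda\in\Omega_\theta\setminus B_{\rho_1}(0)$, the Neumann series representation \eqref{eq:T0resolvpert}, combined with the refined resolvent estimate
\[\|(S_0-\lambda)^{-1}\|_{L(V_0,V_1)}\leq \frac{M_2}{|\lambda|^{1-r-s}}\]
from Corollary~\ref{cor:sectinterp} and the uniform $L(V_0)$-bound on $\bigl(I-R(S_0-\lambda)^{-1}\bigr)^{-1}$ already used in Lemma~\ref{lem:T0pert}, yields $\|(T_0-\lambda)^{-1}\|_{L(V_0,V_1)}\leq C|\lambda|^{-(1-r-s)}$. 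The integrand in \eqref{eq:L0pm} therefore decays like $|\lambda|^{-(2-r-s)}$, which is integrable because $r+s<1$. On the bounded part of the contour, the inclusion $\mdef(T_0)\subset V_1$ together with Lemma~\ref{lem:op1}(a) gives $(T_0-\lambda)^{-1}\in L(V_0,V_1)$ for each $\lambda$, while the first resolvent identity $(T_0-\lambda)^{-1}-(T_0-\mu)^{-1}=(\lambda-\mu)(T_0-\lambda)^{-1}(T_0-\mu)^{-1}$, factored through $V_1$ via the right factor, delivers continuity of $\lambda\mapsto(T_0-\lambda)^{-1}$ in the $L(V_0,V_1)$-norm. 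Hence the integral converges in $L(V_0,V_1)$.

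For the symmetry, the first step is to establish the pointwise resolvent identity
\[\braket{(T_0-\lambda)^{-1}v}{w}=-\braket{v}{(T_0+\bar\lambda)^{-1}w},\qquad v,w\in V_0,\]
valid whenever both $\lambda$ and $-\bar\lambda$ lie in $\varrho(T_0)$. Setting $u=(T_0-\lambda)^{-1}v$ and $u'=(T_0+\bar\lambda)^{-1}w$, both in $\mdef(T_0)$, substituting $T_0u=\lambda u+v$ and $T_0u'=-\bar\lambda u'+w$ into Lemma~\ref{lem:Hamsym}, and invoking antilinearity in the second slot, the $\braket{u}{u'}$-terms cancel (since $\overline{-\bar\lambda}=-\lambda$), leaving the claimed identity. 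I will then substitute it into the integral representation of $\braket{L_{0+}v}{w}$. Because $\braket{v}{\cdot}$ is a bounded antilinear functional on $V_1$ by \eqref{eq:extJinner}, it may be pulled through the Bochner integral defining $L_{0-}w$. Parametrizing both contours by $t\in\R$ via $\lambda=h+it$ and $\mu=-h+it$, the relation $-\overline{h+it}=\mu$ identifies $(T_0+\bar\lambda)^{-1}=(T_0-\mu)^{-1}$, and the remaining scalar factors combine via $\tfrac{1}{h+it}=-\tfrac{1}{-h-it}$ with the opposite signs $\pm\tfrac{1}{2\pi i}$ in \eqref{eq:L0pm} to produce exactly $-\braket{v}{L_{0-}w}$.

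The main obstacle is not analytic but bookkeeping: carefully handling complex conjugation as the antilinear form $\braket{v}{\cdot}$ passes through an operator-valued contour integral, and aligning the two contour parametrizations so that all signs match.
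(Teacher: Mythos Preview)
Your proposal is correct and follows essentially the same route as the paper. The paper derives the same tail estimate $\|(T_0-\lambda)^{-1}\|_{L(V_0,V_1)}\leq 2M_2|\lambda|^{-(1-r-s)}$ from \eqref{eq:T0resolvpert} and Corollary~\ref{cor:sectinterp} and concludes convergence of \eqref{eq:L0pm} in $L(V_0,V_1)$ directly (glossing over the bounded portion of the contour that you handle explicitly via Lemma~\ref{lem:op1}); for the symmetry it performs the same computation you outline, only inline rather than first isolating the resolvent identity $\braket{(T_0-\lambda)^{-1}v}{w}=-\braket{v}{(T_0+\bar\lambda)^{-1}w}$.
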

\begin{proof}
  In the proof of Lemma~\ref{lem:T0pert} we have seen that there exists
  $\rho_1>0$ such that
  \[(T_0-\lambda)^{-1}=(S_0-\lambda)^{-1}
  \bigl(I-R(S_0-\lambda)^{-1}\bigr)^{-1}\]
  for $\lambda\in\Omega_\theta$, $|\lambda|>\rho_1$,
  and the estimates
  \[\|(S_0-\lambda)^{-1}\|_{L(V_0,V_1)}\leq\frac{M_2}{|\lambda|^{1-r-s}},
  \qquad
  \|R(S_0-\lambda)^{-1}\|_{L(V_0)}\leq\frac12\]
  hold.
  It follows that
  \begin{equation}\label{eq:T0est-V0V1}
    \|(T_0-\lambda)^{-1}\|_{L(V_0,V_1)}\leq\frac{2M_2}{|\lambda|^{1-r-s}}.
  \end{equation}
  Since $1-r-s>0$ this implies that the integral in \eqref{eq:L0pm}
  converges in $L(V_0,V_1)$; in particular $L_{0\pm}\in L(V_0,V_1)$.
  For $v,w\in V_0$ we can now derive, using Lemma~\ref{lem:Hamsym},
  \begin{align*}
    \braket{L_{0+}v}{w}
    &=\Braket{\frac{1}{2\pi i}\int_{h-i\infty}^{h+i\infty}\frac{1}{\lambda}
      (T_0-\lambda)^{-1}v\,d\lambda}{w}\\
    &=\frac{1}{2\pi}\int_{-\infty}^\infty
    \Braket{\frac{1}{h+it}(T_0-h-it)^{-1}v}{w}\,dt\\
    &=\frac{1}{2\pi}\int_{-\infty}^\infty
    \Braket{v}{\frac{1}{h-it}(-T_0-h+it)^{-1}w}\,dt\\
    &=\frac{1}{2\pi}\int_{-\infty}^\infty
    \Braket{v}{\frac{1}{-h+it}(T_0+h-it)^{-1}w}\,dt\\
    &=\Braket{v}{\frac{1}{2\pi i}\int_{-h-i\infty}^{-h+i\infty}
      \frac{1}{\lambda}(T_0-\lambda)^{-1}w\,d\lambda}
    =-\braket{v}{L_{0-}w}.
  \end{align*}
\end{proof}

\begin{coroll}\label{cor:V0pmneutral}
  \[\braket{v}{w}=0 \qquad\text{for all}\quad v\in V_{0\pm},\, w\in\range(L_{0\pm}).\]
\end{coroll}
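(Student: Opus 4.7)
The plan is to obtain both claims directly from the identity in Lemma~\ref{lem:L0pm} by exploiting the fact that $V_{0\pm}=\ker L_{0\mp}$. Since $i\R\subset\varrho(T_0)$ we have $0\in\varrho(T_0)$, so Theorem~\ref{theo:sectdichot} applied to the strictly dichotomous operator $T_0$ gives $V_{0\pm}=\range(P_{0\pm})=\ker P_{0\mp}=\ker L_{0\mp}$. Note also that $L_{0\pm}\in L(V_0,V_1)$ by the proof of Lemma~\ref{lem:L0pm}, so $\range(L_{0\pm})\subset V_1$; this makes every inner product $\braket{\cdot}{\cdot}$ appearing below meaningful via the extension \eqref{eq:extJinner}.

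First I would handle the minus case, which falls out of Lemma~\ref{lem:L0pm} with no further work. For $v\in V_{0-}=\ker L_{0+}$ and arbitrary $u\in V_0$, Lemma~\ref{lem:L0pm} gives
\[0=\braket{L_{0+}v}{u}=-\braket{v}{L_{0-}u}.\]
Letting $w=L_{0-}u$ range over $\range(L_{0-})$ yields $\braket{v}{w}=0$ for all $v\in V_{0-}$ and $w\in\range(L_{0-})$.

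For the plus case I would again invoke the identity, but now fix the second argument in $V_{0+}$: for $w\in V_{0+}=\ker L_{0-}$ and arbitrary $u\in V_0$,
\[\braket{L_{0+}u}{w}=-\braket{u}{L_{0-}w}=0,\]
which shows $\braket{v'}{w}=0$ for every $v'\in\range(L_{0+})$ and $w\in V_{0+}$. The pairings in \eqref{eq:extJinner} satisfy $\braket{w}{v'}=\overline{\braket{v'}{w}}$ when $v'\in V_1$ and $w\in V_0$, so swapping the arguments and relabelling gives $\braket{v}{w}=0$ for $v\in V_{0+}$ and $w\in\range(L_{0+})$, as required.

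There is no substantive obstacle; the only care needed is bookkeeping of the pairing $V_1\times V_0$ versus $V_0\times V_1$, which is resolved by the conjugate-symmetry relation between them. In particular no density or approximation argument is required, since the identity from Lemma~\ref{lem:L0pm} holds for all $v,w\in V_0$.
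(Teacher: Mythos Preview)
Your proof is correct and follows exactly the same approach as the paper: the paper's one-line proof (``This is immediate since $V_{0\pm}=\ker L_{0\mp}$'') implicitly uses Lemma~\ref{lem:L0pm} in precisely the way you spell out, and your handling of the conjugate-symmetry bookkeeping between the $V_1\times V_0$ and $V_0\times V_1$ pairings is appropriate. The only difference is that you are more explicit; there is no substantive divergence in method.
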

\begin{proof}
  This is immediate since $V_{0\pm}=\ker L_{0\mp}$.
\end{proof}

We can now establish conditions for the subspaces $V_{0\pm}$ to be
graphs of operators.
We say that a subspace $U\subset V_0=H_{-r}\times H_{-s}\ads$
is the graph of a (possibly unbounded) operator
$X:\mdef(X)\subset H_{-r}\to H_{-s}\ads$ if
\[U=\set{\pmat{x\\Xx}}{x\in\mdef(X)}=\range\pmat{I\\X}.\]
We also consider the inverse situation where $U\subset H_{-r}\times H_{-s}\ads$
is the graph of an
operator $Y:\mdef(Y)\subset H_{-s}\ads\to H_{-r}$, i.e.,
\[U=\set{\pmat{Yy\\y}}{y\in\mdef(Y)}=\range\pmat{Y\\I}.\]

\begin{prop}\label{prop:graph-acao}
  If
  \begin{equation}\label{eq:ac}
    \bigcap_{\lambda\in i\R\cap\varrho(A^*)}\ker B^*(A^*-\lambda)^{-1}=\{0\}
    \qquad\text{on}\quad H_{-s}\ads,
  \end{equation}
  then $V_{0\pm}=\range\pmat{I\\X_{0\pm}}$ with closed operators
  $X_{0\pm}:\mdef(X_{0\pm})\subset H_{-r}\to H_{-s}\ads$.
  If
  \begin{equation}\label{eq:ao}
    \bigcap_{\lambda\in i\R\cap\varrho(A)}\ker C(A-\lambda)^{-1}=\{0\}
    \qquad\text{on}\quad H_{-r},
  \end{equation}
  then $V_{0\pm}=\range\pmat{Y_{0\pm}\\I}$ with closed operators
  $Y_{0\pm}:\mdef(Y_{0\pm})\subset H_{-s}\ads\to H_{-r}$.
  If both \eqref{eq:ac} and \eqref{eq:ao} hold then $X_{0\pm}$ are
  injective and $X_{0\pm}^{-1}=Y_{0\pm}$.
\end{prop}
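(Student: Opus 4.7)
The plan is to prove the three assertions in order, the graph representation under \eqref{eq:ac} being the main content. To show $V_{0\pm}=\range\smallmat{I\\X_{0\pm}}$ with a closed operator, it suffices to prove $V_{0\pm}\cap(\{0\}\times H_{-s}\ads)=\{0\}$; once established, $X_{0\pm}$ is defined by $X_{0\pm}x=y$ whenever $(x,y)\in V_{0\pm}$, and its closedness follows from closedness of $V_{0\pm}\subset V_0$ together with continuity of the coordinate projections.

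The central step is: assuming \eqref{eq:ac} and $(0,y)\in V_{0+}$, conclude $y=0$ (the $V_{0-}$ case is identical). For each $\lambda\in\varrho(T_0)$, the vector $(T_0-\lambda)^{-1}(0,y)$ lies in $\mdef(T_0)\cap V_{0+}$, which equals $\range(L_{0+})$ by Lemma~\ref{lem:Lpm-range}(b), applicable since Theorem~\ref{theo:T0dichot} makes $T_0$ strictly dichotomous and densely defined.  Corollary~\ref{cor:V0pmneutral} therefore gives $\braket{(0,y)}{(T_0-\lambda)^{-1}(0,y)}=0$, which by \eqref{eq:extJinner} unpacks to
\[
\iprods{x_\lambda}{y}{s}\ads=0\qquad\text{for all }\lambda\in\varrho(T_0),
\]
where $x_\lambda$ denotes the first component of $(T_0-\lambda)^{-1}(0,y)$.

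To connect this identity to \eqref{eq:ac}, restrict to $\lambda\in i\R$ with $\lambda\in\varrho(A)$ and $-\lambda\in\varrho(A^*)$.  Block Schur inversion of $(T_0-\lambda)\smallmat{x_\lambda\\y_\lambda}=\smallmat{0\\y}$ yields $x_\lambda=-(A-\lambda)^{-1}BB^*z_\lambda$ with $z_\lambda=\bigl[(A^*+\lambda)+C^*C(A-\lambda)^{-1}BB^*\bigr]^{-1}y$.  The scale-duality $((A-\lambda)^{-1}B)^*=B^*(A^*-\bar\lambda)^{-1}$ from Section~\ref{sec:scale} rewrites $\iprods{x_\lambda}{y}{s}\ads=0$ as
\[
\iprod{B^*z_\lambda}{B^*(A^*+\lambda)^{-1}y}_U=0
\]
(using $\bar\lambda=-\lambda$ on $i\R$).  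The perturbation estimates of Corollary~\ref{cor:sectinterp} and Lemma~\ref{lem:T0pert} show that $C^*C(A-\lambda)^{-1}BB^*$ is of strictly lower order than $(A^*+\lambda)$ as $|\lambda|\to\infty$ on $i\R$, so $z_\lambda$ is asymptotic to $(A^*+\lambda)^{-1}y$ and the left-hand side is asymptotic to $\|B^*(A^*+\lambda)^{-1}y\|_U^2$.  Combining this with holomorphy of $\lambda\mapsto\iprods{x_\lambda}{y}{s}\ads$ on $\varrho(T_0)$ and the identity theorem, which propagates the vanishing from $i\R$ to the full connected component, pins down $B^*(A^*-\mu)^{-1}y=0$ for every $\mu\in i\R\cap\varrho(A^*)$.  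Assumption \eqref{eq:ac} then yields $y=0$.

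The second claim follows by the symmetric argument exchanging $(A,B,r)$ with $(-A^*,C^*,s)$ and \eqref{eq:ac} with \eqref{eq:ao}.  For the third, under both conditions $X_{0\pm}x=0$ translates to $(x,0)\in V_{0\pm}$, which by the second claim forces $x=0$; hence $X_{0\pm}$ is injective, and $X_{0\pm}^{-1}=Y_{0\pm}$ because both operators parametrize the same subspace $V_{0\pm}$.  The main obstacle is the third paragraph, namely the passage from the global orthogonality to the pointwise vanishing of $B^*(A^*-\mu)^{-1}y$: separating the leading Schur term from the $C^*C$-correction while accurately tracking the duality pairings across the two scales $\{H_s\}$ and $\{H_s\ads\}$ is the delicate part.
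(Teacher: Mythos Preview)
Your third paragraph contains a genuine gap: the asymptotic argument does not deliver the pointwise vanishing $B^*(A^*-\mu)^{-1}y=0$. You have established $\iprod{B^*z_\lambda}{B^*(A^*+\lambda)^{-1}y}_U=0$ for $\lambda\in i\R$ and you know that $z_\lambda-(A^*+\lambda)^{-1}y$ is of lower order as $|\lambda|\to\infty$. But $\|B^*(A^*+\lambda)^{-1}y\|_U^2$ itself tends to $0$ as $|\lambda|\to\infty$ by the resolvent decay, so the fact that the inner product equals zero and is asymptotic to this quantity carries no information about any fixed finite $\lambda$. The identity-theorem step is empty: $\iprods{x_\lambda}{y}{s}\ads$ is already zero on all of $\varrho(T_0)$ by your second paragraph, and the putative leading term $\|B^*(A^*+\lambda)^{-1}y\|_U^2$ is not holomorphic, so nothing can be propagated. (A secondary issue: your Schur inversion requires $\lambda\in\varrho(A)$, which is not guaranteed on $i\R$ under the standing hypotheses of Section~\ref{sec:angular}.)

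The paper's proof avoids the Schur complement altogether. With $(x,y)=(T_0-it)^{-1}(0,w)$ one has $(A-it)x=BB^*y$ and $w=-C^*Cx-(A^*+it)y$. Substituting the second identity directly into $\iprods{x}{w}{s}\ads$, moving $A^*+it$ across via \eqref{eq:extAadj}, and then inserting the first identity yields
\[
0=\iprods{x}{w}{s}\ads=-\|Cx\|_Y^2-\|B^*y\|_U^2,
\]
so $B^*y=0$ and $Cx=0$ for each fixed $t$ with $-it\in\varrho(A^*)$. Since $w=-(A^*+it)y$, this gives $B^*(A^*+it)^{-1}w=-B^*y=0$, and \eqref{eq:ac} finishes the argument. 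The point you missed is that $\iprods{x}{w}{s}\ads$ can be evaluated exactly as a sum of two nonpositive terms, without inverting any Schur complement and without any asymptotics.
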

\begin{proof}
  For the first assertion, since $V_{0\pm}$ are closed linear subspaces
  of $V_0$, it suffices to show that $(0,w)\in V_{0\pm}$ implies $w=0$.
  Let $(0,w)\in V_{0\pm}$ and $t\in\R$ such that $-it\in\varrho(A^*)$.
  Set
  \[\pmat{x\\y}=(T_0-it)^{-1}\pmat{0\\w}.\]
  Then $(x,y)\in\mdef(T_0)\cap V_{0\pm}$ by the invariance of $V_{0\pm}$.
  By Lemma~\ref{lem:Lpm-range}  it follows that
  \((x,y)\in\range(L_{0\pm})\).
  Using Corollary~\ref{cor:V0pmneutral}, we get
  \[0=\Braket{\pmat{x\\y}}{\pmat{0\\w}}=i\iprods{x}{w}{s}\ads.\]
  From
  \[\pmat{0\\w}=(T_0-it)\pmat{x\\y}=\pmat{(A-it)x-BB^*y\\-C^*Cx-(A^*+it)y}\]
  we thus obtain
  \begin{align*}
    0&=\iprods{x}{w}{s}\ads
    =-\iprods{x}{C^*Cx}{s}\ads-\iprods{x}{(A^*+it)y}{s}\ads\\
    &=-\|Cx\|^2-\iprodsr{(A-it)x}{y}{r}
    =-\|Cx\|^2-\iprodsr{BB^*y}{y}{r}\\
    &=-\|Cx\|^2-\|B^*y\|^2
  \end{align*}
  and therefore $Cx=B^*y=0$.
  This implies $w=-(A^*+it)y$ and hence $-B^*y=B^*(A^*+it)^{-1}w=0$.
  Since $t\in\R$ with $-it\in\varrho(A^*)$ was arbitrary, \eqref{eq:ac}
  implies that $w=0$.
  For the second assertion, we show in an analogous way
  that $(w,0)\in V_{0\pm}$ implies $w=0$ provided that \eqref{eq:ao} holds.
  The final statement is then clear.
\end{proof}

\begin{prop}\label{prop:graph-stable}
  Suppose that $A$ is sectorial with $0\in\varrho(A)$. Then
  \[V_{0-}=\range\pmat{I\\X_{0-}},\qquad
  V_{0+}=\range\pmat{Y_{0+}\\I}\]
  with closed operators $X_{0-}:\mdef(X_{0-})\subset H_{-r}\to H_{-s}\ads$
  and $Y_{0+}:\mdef(Y_{0+})\subset H_{-s}\ads\to H_{-r}$.
\end{prop}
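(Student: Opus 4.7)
I would prove both claims by the same scheme, so I describe only the argument for $V_{0+}=\range\smallmat{Y_{0+}\\I}$; the case of $V_{0-}$ follows by swapping the roles of $(A,C)$ and $(-A^*,B^*)$. Set $\mathcal{W}:=\{w\in H_{-r}:(w,0)\in V_{0+}\}$, a closed subspace of $H_{-r}$; the goal is $\mathcal{W}=\{0\}$. For each $w\in\mathcal{W}$ I would follow the template of Proposition~\ref{prop:graph-acao}: using $0\in\varrho(T_0)$ (guaranteed by the strict dichotomy of Theorem~\ref{theo:T0dichot}), let $(x,y)=T_0^{-1}(w,0)$. Then $(x,y)\in V_{0+}\cap\mdef(T_0)\subset\range(L_{0+})$ by Lemma~\ref{lem:Lpm-range}(b), so Corollary~\ref{cor:V0pmneutral} combined with the Hamiltonian equations and the scale-adjoint identity~\eqref{eq:extAadj} yields, by the same algebraic manipulation as in the proof of Proposition~\ref{prop:graph-acao}, the relation $\|Cx\|_Y^2+\|B^*y\|_U^2=0$. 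Hence $Cx=0$ and $B^*y=0$, and the hypothesis $0\in\varrho(A)$ (equivalently $0\in\varrho(A^*)$) forces $y=0$ and $x=A^{-1}w$, so in particular $CA^{-1}w=0$ and $A^{-1}w\in\mathcal{W}$.

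Iterating this argument on the vectors $(A^{-n}w,0)\in V_{0+}$ produces $CA^{-n}w=0$ for every $n\ge 1$. The Neumann expansion $(A-\lambda)^{-1}w=\sum_{n\ge 0}\lambda^n A^{-n-1}w$ valid near $\lambda=0$ together with analytic continuation give $C(A-\lambda)^{-1}w=0$ for all $\lambda\in\varrho(A)$; plugging this into the Schur block formula for $(T_0-\lambda)^{-1}(w,0)$ collapses it to $((A-\lambda)^{-1}w,0)$, showing that $\mathcal{W}$ is $(A-\lambda)^{-1}$-invariant and that $\mathcal{W}\times\{0\}$ is $(T_0-\lambda)^{-1}$-invariant. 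The sectorial limit $\lambda(A-\lambda)^{-1}x\to -x$ in $H_s\ads$ as $|\lambda|\to\infty$ in $\Sigma_{\frac\pi2+\theta}$ (valid for $x\in H_{1-r}\ads\subset H_s\ads$ via Corollary~\ref{cor:sectinterp}) then gives $Cx=0$ for every $x\in\mathcal{W}\cap H_{1-r}\ads$, so that $T_0(x,0)=(Ax,0)$ on this domain. Hence $\mathcal{W}\times\{0\}$ is $T_0$-invariant and $T_0|_{\mathcal{W}\times\{0\}}$ canonically identifies with the part $A|_{\mathcal{W}}$ of $A$ in $\mathcal{W}$.

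The main obstacle is the concluding step: converting a two-sided spectral constraint into $\mathcal{W}=\{0\}$. The $(A-\lambda)^{-1}$-invariance of $\mathcal{W}$ shows $\sigma(A|_{\mathcal{W}})\subset\sigma(A)\subset\C_-$, while the identification with $T_0|_{\mathcal{W}\times\{0\}}$ together with the strict dichotomy of $T_0$ on $V_{0+}$ shows $\sigma(A|_{\mathcal{W}})\subset\sigma(T_0|_{V_{0+}})\subset\C_+$. Hence $\sigma(A|_{\mathcal{W}})=\varnothing$, so $R_\lambda:=(A|_{\mathcal{W}}-\lambda)^{-1}$ is entire in $\lambda\in\C$. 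The sectorial bound $\|(A-\lambda)^{-1}\|\le M/|\lambda|$ on $\Sigma_{\frac\pi2+\theta}$ together with the uniform bound on $\C_-$ inherited from the strict dichotomy show that $R_\lambda$ is bounded on all of $\C$; Liouville's theorem for Banach-space-valued analytic functions then forces $R_\lambda$ to equal a constant operator $R_0=A|_{\mathcal{W}}^{-1}$, and the resolvent equation $0=R_\lambda-R_\mu=(\lambda-\mu)R_\lambda R_\mu$ forces $R_0^2=0$. Since $R_0$ is the restriction of the injective bounded operator $A^{-1}$, applying $R_0$ twice and using injectivity yields $\mathcal{W}=\{0\}$. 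The symmetric argument applied to $\mathcal{W}':=\{w\in H_{-s}\ads:(0,w)\in V_{0-}\}$, with $T_0|_{\{0\}\times\mathcal{W}'}\cong -A^*|_{\mathcal{W}'}$ and $\sigma(T_0|_{V_{0-}})\subset\C_-$, handles $V_{0-}$.
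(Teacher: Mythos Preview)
Your argument is correct, and like the paper it ultimately rests on Liouville's theorem combined with the neutrality identity of Corollary~\ref{cor:V0pmneutral}; but the route you take is more elaborate. The paper applies the neutrality argument at \emph{every} point $\lambda=it$, $t\in\R$, rather than only at $\lambda=0$: this immediately yields $(T_0-it)^{-1}(w,0)=((A-it)^{-1}w,0)$ on all of $i\R$, so the vector-valued functions $\varphi(\lambda)=(T_0-\lambda)^{-1}(w,0)$ and $\psi(\lambda)=((A-\lambda)^{-1}w,0)$ agree on a strip by the identity theorem. Then $\psi$ is bounded on $\overline{\C_+}$ by sectoriality of $A$, while $\varphi$ extends boundedly to $\overline{\C_-}$ by the strict dichotomy bound on $(T_0|_{V_{0+}}-\lambda)^{-1}$; Liouville on the patched entire function gives $\psi$ constant, hence zero, hence $w=0$. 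Your version instead iterates at $\lambda=0$ to get $CA^{-n}w=0$, analytically continues, builds the invariant subspace $\mathcal{W}\times\{0\}$, identifies the part of $T_0$ there with $A|_{\mathcal{W}}$, and then applies Liouville to the \emph{operator}-valued resolvent $R_\lambda=(A|_{\mathcal{W}}-\lambda)^{-1}$. All of this is sound, but the domain checks, invariance propagation, and the $R_0^2=0$ detour (which can be shortened: $R_\lambda$ is constant and $\|R_\lambda\|\leq M/|\lambda|\to0$ along the sector, so $R_0=0$ directly) are extra work that the paper sidesteps by never naming $\mathcal{W}$ or $A|_{\mathcal{W}}$ at all.
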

\begin{proof}
  Let $(0,w)\in V_{0-}$ and $t\in\R$.
  Proceeding as in the previous proof, we set
  \[\pmat{x\\y}=(T_0-it)^{-1}\pmat{0\\w}\]
  and obtain $Cx=B^*y=0$ and hence
  $(A-it)x=0$ and $w=-(A^*+it)y$.
  Since $i\R\subset\varrho(A)$ it follows that
  \[(T_0-it)^{-1}\pmat{0\\w}=\pmat{x\\y}=\pmat{0\\(-A^*-it)^{-1}w}.\]
  We consider now the two functions
  \[\varphi(\lambda)=(T_0-\lambda)^{-1}\pmat{0\\w},\qquad
  \psi(\lambda)=\pmat{0\\(-A^*-\lambda)^{-1}w}.\]
  $\varphi$ is analytic on a strip $\set{\lambda\in\C}{|\Real\lambda|<\eps}$
  while
  $\psi$ is analytic on a half-plane
  $\set{\lambda\in\C}{\Real\lambda<\eps}$
  where $\eps>0$ is sufficiently small.
  The above derivation shows that $\varphi$ and $\psi$ coincide on $i\R$.
  Hence they coincide for $|\Real\lambda|<\eps$ by the identity theorem.
  Moreover $\psi$ is bounded on $\overline{\C_-}$
  since $A$ is sectorial with $0\in\varrho(A)$.
  On the other hand $\varphi$ extends to a bounded analytic function on $\overline{\C_+}$
  since $(0,w)\in V_{0-}$, see Theorem~\ref{theo:sectdichot}.
  Therefore $\varphi$ extends to a bounded entire function
  and is thus constant by
  Liouville's theorem. This implies $w=0$.

  Similarly for $(w,0)\in V_{0+}$, $t\in\R$ and
  \[\pmat{x\\y}=(T_0-it)^{-1}\pmat{w\\0}\]
  we derive
  $Cx=B^*y=0$, $w=(A-it)x$ and $(A^*+it)y=0$; hence
  \[(T_0-it)^{-1}\pmat{w\\0}=\pmat{(A-it)^{-1}w\\0}.\]
  In this case the analytic functions
  \[\varphi(\lambda)=(T_0-\lambda)^{-1}\pmat{w\\0},\qquad
  \psi(\lambda)=\pmat{(A-\lambda)^{-1}w\\0}\]
  coincide on $i\R$, $\varphi$ is bounded on $\overline{\C_-}$ since
  $(w,0)\in V_{0+}$, and $\psi$ is bounded on $\overline{\C_+}$.
  Therefore $\varphi$ is again constant and hence $w=0$.
\end{proof}

We turn to the question of the boundedness of the operators $X_{0\pm}$,
$Y_{0\pm}$.
To this end we use the concept of {angular subspaces},
see \cite[\S5.1]{bart-gohberg-kaashoek79}, \cite[Lemma~7.1]{wyss-unbctrlham}.
Consider again the projections from Lemma~\ref{lem:SQpm},
\[Q_{0-}=\pmat{I&0\\0&0},\qquad Q_{0+}=\pmat{0&0\\0&I},\]
acting on $V_0=H_{-r}\times H_{-s}\ads$.

\begin{lemma}
  Let $U$ be a closed subspace of $V_0$. Then:
  \begin{enumerate}
  \item
    $U=\range\smallmat{I\\X}$ with a closed operator
    $X:\mdef(X)\subset H_{-r}\to H_{-s}\ads$
    if and only if
    \[U\cap\ker Q_{0-}=\{0\}.\]
    $U=\range\smallmat{I\\X}$ with a {bounded} operator $X\in L(H_{-r},H_{-s}\ads)$
    if and only if
    \begin{equation}\label{eq:angularX}
      V_0=U\oplus\ker Q_{0-}.
    \end{equation}
  \item
    $U=\range\smallmat{Y\\I}$ with a closed operator
    $Y:\mdef(Y)\subset H_{-s}\ads\to H_{-r}$
    if and only if
    \[U\cap\ker Q_{0+}=\{0\}.\]
    $U=\range\smallmat{Y\\I}$ with $Y\in L(H_{-s}\ads,H_{-r})$
    if and only if
    \begin{equation}\label{eq:angularY}
      V_0=U\oplus\ker Q_{0+}.
    \end{equation}
  \end{enumerate}
\end{lemma}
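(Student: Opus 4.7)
The plan is to treat $U$ as a candidate graph of a linear operator (or relation) between $H_{-r}$ and $H_{-s}\ads$, use the kernel condition to establish single-valuedness, and exploit the closedness of $U$ together with the closed graph theorem for the boundedness statements. Parts (a) and (b) are mirror images of each other, so I only sketch (a).

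For the first equivalence in (a), the forward direction is immediate: if $U=\range\smallmat{I\\X}$, any element of $U\cap\ker Q_{0-}$ has the form $(x,Xx)$ with $x=0$, hence is zero. Conversely, assume $U\cap\ker Q_{0-}=\{0\}$. Define
\[\mdef(X)=\set{x\in H_{-r}}{\exists\,y\in H_{-s}\ads\text{ with }(x,y)\in U}\]
and $Xx=y$ for the corresponding $y$. The assumption $U\cap\ker Q_{0-}=\{0\}$ means that if $(x,y_1),(x,y_2)\in U$ then $(0,y_1-y_2)\in U\cap\ker Q_{0-}=\{0\}$, so $y_1=y_2$; hence $X$ is well defined, and linearity is clear from $U$ being a subspace. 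To see that $X$ is closed, suppose $x_n\to x$ in $H_{-r}$ and $Xx_n\to y$ in $H_{-s}\ads$; then $(x_n,Xx_n)\to(x,y)$ in $V_0$, and the closedness of $U$ yields $(x,y)\in U$, i.e.\ $x\in\mdef(X)$ and $Xx=y$.

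For the second equivalence, first assume $X\in L(H_{-r},H_{-s}\ads)$, so $\mdef(X)=H_{-r}$. Every $(a,b)\in V_0$ admits the decomposition
\[\pmat{a\\b}=\pmat{a\\Xa}+\pmat{0\\b-Xa},\]
the first summand lying in $U$ and the second in $\ker Q_{0-}$; combined with $U\cap\ker Q_{0-}=\{0\}$, which we already know, this gives $V_0=U\oplus\ker Q_{0-}$. Conversely, $V_0=U\oplus\ker Q_{0-}$ implies $U\cap\ker Q_{0-}=\{0\}$, so by the first part $U=\range\smallmat{I\\X}$ with a closed $X$. For arbitrary $a\in H_{-r}$ the element $(a,0)$ has a (unique) decomposition $(a,0)=(x,Xx)+(0,c)$, forcing $x=a$ and hence $\mdef(X)=H_{-r}$. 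Since $X$ is closed and everywhere defined on the Hilbert space $H_{-r}$, the closed graph theorem yields $X\in L(H_{-r},H_{-s}\ads)$.

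Part (b) follows from the same argument with the two coordinates interchanged: replace $Q_{0-}$ by $Q_{0+}$, swap the roles of $H_{-r}$ and $H_{-s}\ads$, and use the representation $\smallmat{Y\\I}$ instead of $\smallmat{I\\X}$. There is no real obstacle here; the only slightly subtle point is keeping track of which direction of the equivalence uses the closedness of $U$ (needed to obtain closed $X,Y$) and which uses the closed graph theorem (needed to upgrade closed and everywhere defined to bounded).
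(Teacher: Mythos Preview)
Your proof is correct and follows essentially the same approach as the paper. The only difference is that for the bounded case the paper cites \cite[Proposition~5.1]{bart-gohberg-kaashoek79} on angular subspaces, whereas you give the short self-contained argument via the closed graph theorem; your version has the minor advantage of not relying on an external reference.
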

\begin{proof}
  Observe that $\ker Q_{0-}=\{0\}\times H_{-s}\ads$. Since $U$ is the graph of some
  closed operator $X:\mdef(X)\subset H_{-r}\to H_{-s}\ads$ if and only if
  $(0,y)\in U$ implies $y=0$, the first assertion of (a) follows.
  By \cite[Proposition~5.1]{bart-gohberg-kaashoek79},
  \eqref{eq:angularX} holds if and only if
  $U=\set{Xx+x}{x\in\range(Q_{0-})}$ with $X\in L(\range(Q_{0-}),\ker Q_{0-})$.
  Identifying $\range(Q_{0-})\cong H_{-r}$ and $\ker Q_{0-}\cong H_{-s}\ads$,
  we obtain the second assertion of (a).
  The proof of (b) is analogous; here
  $\range(Q_{0+})\cong H_{-s}\ads$, $\ker Q_{0+}\cong H_{-r}$.
\end{proof}

If \eqref{eq:angularX} holds
then  $U$ is called \emph{angular} with respect to
$Q_{0-}$ and $X$ is
the \emph{angular operator} for $U$.
Similarly in case of  \eqref{eq:angularY},
$U$ is called angular with respect to $Q_{0+}$ and
angular operator $Y$.

The next lemma is the key step in proving that $V_{0\pm}$ are
angular subspaces.
The idea for its proof
goes back to \cite[Theorem~2.3]{bubak-mee-ran}
where instead of $F_1$ and $F_2$ the operator $Q_{0-}P+Q_{0+}\tilde P$
was used,
see also \cite[\S6.4]{bart-gohberg-kaashoek79}.
\begin{lemma}\label{lem:angular}
  Suppose $V_0=U\oplus \tilde U$ with closed subspaces $U,\tilde U\subset V_0$.
  Let $P,\tilde P\in L(V_0)$ be the associated complementary projections,
  $U=\range(P)$, $\tilde U=\range(\tilde P)$, $I=P+\tilde P$.
  Let $F_1=I-Q_{0-}+P$ and $F_2=I-P+Q_{0-}$.
  \begin{enumerate}
  \item If
    \begin{equation}\label{eq:V0pm-angular}
      U=\range\pmat{I\\X}, \qquad \tilde U=\range\pmat{Y\\I}
    \end{equation}
    with some $X:\mdef(X)\subset H_{-r}\to H_{-s}\ads$
    and $Y:\mdef(Y)\subset H_{-s}\ads\to H_{-r}$,
    then $F_1$ and $F_2$ are injective.
  \item If $F_1$ and $F_2$ are bijective, then
    \eqref{eq:V0pm-angular} holds with bounded operators
    $X\in L(H_{-r},H_{-s}\ads)$, $Y\in L(H_{-s}\ads,H_{-r})$.
  \end{enumerate}
\end{lemma}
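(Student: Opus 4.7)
The plan is to use the preceding angular subspace lemma to reduce part (b) to verifying the direct sum decompositions $V_0 = U \oplus \ker Q_{0-}$ and $V_0 = \tilde U \oplus \ker Q_{0+}$; the algebraic identities $I = Q_{0-} + Q_{0+} = P + \tilde P$ together with the formulas $F_1 = Q_{0+} + P$ and $F_2 = Q_{0-} + \tilde P$ are what make both parts go through. Throughout, I will also use $\range Q_{0-} = \ker Q_{0+}$.

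For part (a), suppose $F_1 v = 0$, so $Pv = -Q_{0+} v$. Since $Pv \in U$ and $-Q_{0+}v \in \ker Q_{0-}$, this element lies in $U \cap \ker Q_{0-}$. Writing $U = \range\smallmat{I\\X}$, any $(0,y) \in U$ forces $y = X(0) = 0$, so the intersection is trivial and $Pv = 0 = Q_{0+} v$. Hence $v \in \tilde U \cap \range Q_{0-}$, and the analogous structural argument for $\tilde U = \range\smallmat{Y\\I}$ shows $\tilde U \cap \range Q_{0-} = \{0\}$, giving $v = 0$. The injectivity of $F_2$ is obtained symmetrically by interchanging the roles of the two decompositions.

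For part (b), assume $F_1$ and $F_2$ are bijective. Surjectivity of $F_1$ immediately yields $V_0 = U + \ker Q_{0-}$, so it suffices to prove $U \cap \ker Q_{0-} = \{0\}$. The key observation is that surjectivity of $F_2$ gives $V_0 = \range Q_{0-} + \tilde U$, and applying $Q_{0+}$ to both sides (together with $Q_{0+}|_{\range Q_{0-}} = 0$) shows that $Q_{0+}|_{\tilde U}: \tilde U \to \ker Q_{0-}$ is surjective. Given $u \in U \cap \ker Q_{0-}$, pick $\tilde v \in \tilde U$ with $Q_{0+}\tilde v = -2u$ and set $v = u + \tilde v$. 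A direct check using $Pu = u$, $P\tilde v = 0$, $Q_{0+} u = u$ gives $F_1 v = (u - 2u) + u = 0$, so injectivity of $F_1$ forces $v = 0$; then $-u = \tilde v \in U \cap \tilde U = \{0\}$, hence $u = 0$. This establishes $V_0 = U \oplus \ker Q_{0-}$, and the decomposition $V_0 = \tilde U \oplus \ker Q_{0+}$ is produced by the symmetric argument with the roles of $(F_1, U)$ and $(F_2, \tilde U)$ swapped. The preceding angular subspace lemma now delivers the bounded operators $X \in L(H_{-r}, H_{-s}\ads)$ and $Y \in L(H_{-s}\ads, H_{-r})$.

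The main obstacle is precisely the cross-interaction between $F_1$ and $F_2$ in part (b): injectivity of $F_1$ alone does not kill a nontrivial element $u$ of $U \cap \ker Q_{0-}$ (such a $u$ satisfies $F_1 u = 2u$, not $F_1 u = 0$), so one must exploit surjectivity of $F_2$ to manufacture an honest kernel element of $F_1$ out of $u$, which is exactly the purpose of the $-2u$ correction $\tilde v$.
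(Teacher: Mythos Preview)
Your proof is correct, and part (a) matches the paper's argument essentially line for line. For part (b), however, you take an unnecessarily circuitous route.

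The paper handles $U\cap\ker Q_{0-}=\{0\}$ in one line using the injectivity of $F_2$ directly: if $v\in U\cap\ker Q_{0-}$, then $(I-P)v=0$ and $Q_{0-}v=0$, so $F_2v=(I-P+Q_{0-})v=0$, whence $v=0$. There is no need to invoke surjectivity of $F_2$, manufacture the corrector $\tilde v$ with $Q_{0+}\tilde v=-2u$, or pass through $U\cap\tilde U=\{0\}$. Your closing remark that ``injectivity of $F_1$ alone does not kill $u$'' is true, but you overlooked that injectivity of $F_2$ does kill it immediately; the cross-interaction you engineered is an artifact of insisting on routing everything through $F_1$. Your argument is valid, just longer than necessary.
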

\begin{proof}
  \begin{enumerate}
  \item By the previous lemma, identity \eqref{eq:V0pm-angular} implies that
    $U\cap\ker Q_{0-}=\tilde U\cap\ker Q_{0+}=\{0\}$.
    Let $F_1v=0$. Then $(I-Q_{0-})v=-Pv\in U\cap\ker Q_{0-}$,
    which implies $(I-Q_{0-})v=Pv=0$. It follows that
    $v\in\range(Q_{0-})\cap\ker P=\ker Q_{0+}\cap \tilde U$
    and hence $v=0$. The injectivity of $F_2$ is analogous.
  \item Let $v\in U\cap\ker Q_{0-}$. Then $(I-P)v=Q_{0-}v=0$,
    which yields $F_2v=0$ and thus $v=0$.
    On the other hand we can write $w\in V_0$ as $w=F_1v=(I-Q_{0-})v+Pv$
    and so $w\in U+\ker Q_{0-}$.
    This shows that $V_0=U\oplus \ker Q_{0-}$, i.e., $U$ is angular
    with respect to $Q_{0-}$.
    Since $F_1=I-\tilde P+Q_{0+}$ and $F_2=I-Q_{0+}+\tilde P$, we get by symmetry that
    $\tilde U$ is angular to $Q_{0+}$.
    The assertion follows by the previous lemma.
  \end{enumerate}
\end{proof}

\begin{coroll}\label{cor:angular}
  Suppose that $P_{0-}-Q_{0-}$ is compact.
  If
  \[V_{0-}=\range\pmat{I\\X_{0-}}, \qquad V_{0+}=\range\pmat{Y_{0+}\\I},\]
  with some operators $X_{0-},Y_{0+}$, then these operators are in fact bounded,
  $X_{0-}\in L(H_{-r},H_{-s}\ads)$, $Y_{0+}\in L(H_{-s}\ads, H_{-r})$.
\end{coroll}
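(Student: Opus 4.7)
The plan is to invoke Lemma~\ref{lem:angular} with $U = V_{0-}$, $\tilde U = V_{0+}$, $P = P_{0-}$, $\tilde P = P_{0+}$. Since $T_0$ is strictly dichotomous by the hypotheses implicit in this section, we have $P_{0\pm} \in L(V_0)$, so $V_0 = V_{0-} \oplus V_{0+}$ as a topological direct sum with continuous complementary projections; this is the setting required by the lemma.

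By part~(a) of Lemma~\ref{lem:angular}, the assumed graph representations of $V_{0\pm}$ already force $F_1 = I - Q_{0-} + P_{0-}$ and $F_2 = I - P_{0-} + Q_{0-}$ to be injective. The task therefore reduces to upgrading injectivity to bijectivity, since part~(b) of the same lemma then produces the boundedness of $X_{0-}$ and $Y_{0+}$.

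Here is where the compactness assumption enters. Rewriting
\[
F_1 = I + (P_{0-} - Q_{0-}), \qquad F_2 = I - (P_{0-} - Q_{0-}),
\]
we see that both $F_1$ and $F_2$ are compact perturbations of the identity on $V_0$. By the Fredholm alternative, each is a Fredholm operator of index zero on $V_0$. Combined with the injectivity from step one, this forces surjectivity, hence bijectivity, of both $F_1$ and $F_2$.

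With $F_1$ and $F_2$ bijective, Lemma~\ref{lem:angular}(b) yields that $V_{0-}$ and $V_{0+}$ are angular with respect to $Q_{0-}$ and $Q_{0+}$, respectively, and the angular operators $X_{0-} \in L(H_{-r}, H_{-s}\ads)$ and $Y_{0+} \in L(H_{-s}\ads, H_{-r})$ are bounded; these coincide with the operators $X_{0-}, Y_{0+}$ given by hypothesis by uniqueness of the graph representation. The only nontrivial step is recognizing that the combinatorial form of $F_1$ and $F_2$ is precisely $I \pm (P_{0-} - Q_{0-})$, so the compactness hypothesis yields the Fredholm property cleanly; there is no genuine analytic obstacle beyond this observation.
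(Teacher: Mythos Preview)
Your proof is correct and follows essentially the same route as the paper: apply Lemma~\ref{lem:angular} with $U=V_{0-}$, $\tilde U=V_{0+}$, $P=P_{0-}$, rewrite $F_1=I+(P_{0-}-Q_{0-})$ and $F_2=I-(P_{0-}-Q_{0-})$, and conclude bijectivity from injectivity via the Fredholm alternative. The paper's proof is just a terser version of exactly this argument.
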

\begin{proof}
  We use the previous lemma with $U=V_{0-}$, $\tilde U=V_{0+}$,
  $P=P_{0-}$, $\tilde P=P_{0+}$.
  Then $F_1=I+(P_{0-}-Q_{0-})$ and $F_2=I-(P_{0-}-Q_{0-})$,
  and the assertion follows from Fredholm's alternative.
\end{proof}

\begin{theo}\label{theo:angular-acao}
  Suppose that $A$ has a compact resolvent. If
  \begin{equation}\label{eq:ac-theo}
    \bigcap_{\lambda\in i\R\cap\varrho(A^*)}\ker B^*(A^*-\lambda)^{-1}=\{0\}
    \qquad\text{on}\quad H_{-s}\ads,
  \end{equation}
  and
  \begin{equation}\label{eq:ao-theo}
    \bigcap_{\lambda\in i\R\cap\varrho(A)}\ker C(A-\lambda)^{-1}=\{0\}
    \qquad\text{on}\quad H_{-r},
  \end{equation}
  then $V_{0\pm}=\range\pmat{I\\X_{0\pm}}$ where
  the operators $X_{0-}$ and $X_{0+}$ are injective,
  $X_{0-}\in L(H_{-r}, H_{-s}\ads)$
  and $X_{0+}^{-1}\in L(H_{-s}\ads,H_{-r})$.
\end{theo}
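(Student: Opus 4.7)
The plan is to combine Proposition~\ref{prop:graph-acao} with Corollary~\ref{cor:angular}; the main work is a compactness check carried out by contour integration.

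Conditions \eqref{eq:ac-theo} and \eqref{eq:ao-theo} are exactly the hypotheses of Proposition~\ref{prop:graph-acao}, which directly gives $V_{0\pm}=\range\smallmat{I\\X_{0\pm}}=\range\smallmat{Y_{0\pm}\\I}$ with $X_{0\pm}$ closed and injective and $X_{0\pm}^{-1}=Y_{0\pm}$. In particular $V_{0-}=\range\smallmat{I\\X_{0-}}$ and $V_{0+}=\range\smallmat{Y_{0+}\\I}$ are already of the form required by Corollary~\ref{cor:angular}, so the theorem reduces to showing that $P_{0-}-Q_{0-}$ is compact on $V_0$: this at once yields the boundedness of $X_{0-}$ and of $Y_{0+}=X_{0+}^{-1}$.

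Using $I=P_{0+}+P_{0-}=Q_{0+}+Q_{0-}$, I would write $2(P_{0-}-Q_{0-})=(Q_{0+}-Q_{0-})-(P_{0+}-P_{0-})$ and represent both differences by contour integrals. By Theorem~\ref{theo:T0dichot} and \eqref{eq:dichotint2},
\[
P_{0+}v-P_{0-}v=\frac{1}{\pi i}\int_{-i\infty}^{i\infty\,\prime}(T_0-\lambda)^{-1}v\,d\lambda,\qquad v\in\mdef(T_0).
\]
Since $i\R\subset\varrho(T_0)$, this splits as the principal value along $\gamma_1$ plus an integral over the compact segment from $-i\rho$ to $i\rho$. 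Lemma~\ref{lem:SQpm} supplies the analogous representation
\[
Q_{0+}v-Q_{0-}v=\frac{1}{\pi i}\int_{\gamma_1}'(S_0-\lambda)^{-1}v\,d\lambda+Kv,\qquad v\in V_0.
\]
Subtracting the two formulas on $\mdef(T_0)$ produces
\[
(Q_{0+}-Q_{0-})-(P_{0+}-P_{0-})=\frac{1}{\pi i}\int_{\gamma_1}'\bigl[(S_0-\lambda)^{-1}-(T_0-\lambda)^{-1}\bigr]d\lambda-\frac{1}{\pi i}\int_{-i\rho}^{i\rho}(T_0-\lambda)^{-1}d\lambda+K.
\]

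The crux of the proof is then that each of the three right-hand pieces is compact on $V_0$. By Lemma~\ref{lem:ham-cptresolv}, $T_0$ has a compact resolvent, so the segment integral is a norm-convergent integral over a compact interval of compact operators, hence compact. The operator $K$ is block diagonal with entries that are contour integrals of $(A-\lambda)^{-1}$ on $H_{-r}$ and of $(-A^*-\lambda)^{-1}$ on $H_{-s}\ads$ over the compact contours $\gamma_{0\pm}$; by Lemma~\ref{lem:scale-cpt}, combined with the isomorphism properties from Section~\ref{sec:scale}, each of these resolvents is compact on its respective space, so $K$ is compact. For the $\gamma_1$-integrand, identity \eqref{eq:T0S0diff} rewrites it as $(T_0-\lambda)^{-1}R(S_0-\lambda)^{-1}$, which maps $V_0$ into $\mdef(T_0)=H_{1-r}\ads\times H_{1-s}$; by Lemma~\ref{lem:scale-cpt} this imbedding is compact, so the integrand is compact. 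Estimate \eqref{eq:T0S0diffest} together with $r+s<1$ forces absolute norm-convergence of the integral, so the $\gamma_1$-integral itself is compact as a norm-limit of compact operators.

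Since $\mdef(T_0)$ is dense in $V_0$ and every operator appearing on either side is bounded, the identity extends to all of $V_0$, so $P_{0-}-Q_{0-}$ is compact. Corollary~\ref{cor:angular} then yields $X_{0-}\in L(H_{-r},H_{-s}\ads)$ and $Y_{0+}\in L(H_{-s}\ads,H_{-r})$, and the relation $X_{0+}^{-1}=Y_{0+}$ completes the proof. The main obstacle is the bookkeeping for the contour deformation connecting the $T_0$-integral along $i\R$ with the $S_0$-integral along $\gamma_1$; once this identity is set up correctly, compactness of each piece follows directly from the compact embeddings of Lemma~\ref{lem:scale-cpt} and the perturbation estimate \eqref{eq:T0S0diffest}.
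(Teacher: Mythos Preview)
Your proof is correct and follows essentially the same route as the paper: reduce to Proposition~\ref{prop:graph-acao} plus Corollary~\ref{cor:angular}, then establish compactness of $P_{0-}-Q_{0-}$ via the identity $2(P_{0-}-Q_{0-})=(Q_{0+}-Q_{0-})-(P_{0+}-P_{0-})$ and the contour-integral representations from Theorem~\ref{theo:sectdichot} and Lemma~\ref{lem:SQpm}. The only cosmetic difference is that the paper argues compactness of the $\gamma_1$-integrand directly from the compactness of both resolvents $(S_0-\lambda)^{-1}$ and $(T_0-\lambda)^{-1}$, whereas you invoke the factorisation \eqref{eq:T0S0diff} and the compact embedding of $\mdef(T_0)$ into $V_0$; both arguments are equivalent.
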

\begin{proof}
  If $A$ has a compact resolvent, then the same is true for
  $S_0$ and $T_0$, compare Lemma~\ref{lem:ham-cptresolv}.
  From Theorem~\ref{theo:sectdichot} and Lemma~\ref{lem:SQpm}  we
  know that
  \begin{equation*}
    P_{0+}v-P_{0-}v=\frac{1}{\pi i}\int_{-i\infty}^{i\infty\,\prime}
    (T_0-\lambda)^{-1}v\,d\lambda,
    \qquad v\in\mdef(T_0),
  \end{equation*}
  \[Q_{0+}v-Q_{0-}v=\frac{1}{\pi i}\int_{\gamma_1}'(S_0-\lambda)^{-1}v\,d\lambda
  +Kv, \qquad v\in V_0,\]
  where $K\in L(V_0)$.
  Since
  \[Q_{0+}-Q_{0-}-(P_{0+}-P_{0-})=I-2Q_{0-}-(I-2P_{0-})
  =2(P_{0-}-Q_{0-})\]
  we find
  \begin{multline*}
    2(P_{0-}-Q_{0-})v=
    \frac{1}{\pi i}\int_{\gamma_1}
    (S_0-\lambda)^{-1}-(T_0-\lambda)^{-1}\,d\lambda \,v\\
    -\frac{1}{\pi i}\int_{-i\rho}^{i\rho}(T_0-\lambda)^{-1}\,d\lambda \,v
    +Kv
  \end{multline*}
  for $v\in\mdef(T_0)$. Note here that because of \eqref{eq:T0S0diffest}
  the first integral converges in the operator norm topology of $L(V_0)$.
  In particular, both integrals on the right-hand side define bounded operators
  in $L(V_0)$ and hence the above identity holds for all $v\in V_0$.
  Since $(T_0-\lambda)^{-1}$ and $(S_0-\lambda)^{-1}$ are compact,
  both integrals yield in fact compact operators.
  The expression for $K$ in Lemma~\ref{lem:SQpm} implies that $K$ is compact too.
  Consequently $P_{0-}-Q_{0-}$ is compact.
  The assertion is now a consequence of Proposition~\ref{prop:graph-acao}
  and Corollary~\ref{cor:angular}.
\end{proof}

\begin{theo}\label{theo:angular-stable}
  Suppose that $A$ has a compact resolvent, is sectorial and $0\in\varrho(A)$.
  Then
  \[V_{0-}=\range\pmat{I\\X_{0-}}, \qquad V_{0+}=\range\pmat{Y_{0+}\\I}\]
  with $X_{0-}\in L(H_{-r},H_{-s}\ads)$, $Y_{0+}\in L(H_{-s}\ads, H_{-r})$.
\end{theo}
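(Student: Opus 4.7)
The plan is to combine Proposition~\ref{prop:graph-stable}, which already supplies the graph structure of $V_{0\pm}$ in the stable setting, with Corollary~\ref{cor:angular}, which upgrades such graph operators to bounded ones as soon as $P_{0-}-Q_{0-}$ is compact. So after noting the graph structure, the task reduces to establishing the compactness of $P_{0-}-Q_{0-}$, for which I essentially reuse the integral-representation argument from the proof of Theorem~\ref{theo:angular-acao}.

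First I would set the stage. Sectoriality of $A$ together with $0\in\varrho(A)$ forces $\sigma(A)\subset\C_-$, so in particular $\sigma(A)\cap i\R=\varnothing$. Hence Theorem~\ref{theo:T0dichot} applies and $T_0$ is bisectorial and strictly dichotomous on $V_0$, with the associated invariant decomposition $V_0=V_{0+}\oplus V_{0-}$ and bounded complementary projections $P_{0\pm}\in L(V_0)$. Since $A$ is sectorial with $0\in\varrho(A)$, Proposition~\ref{prop:graph-stable} furnishes closed operators $X_{0-}:\mdef(X_{0-})\subset H_{-r}\to H_{-s}\ads$ and $Y_{0+}:\mdef(Y_{0+})\subset H_{-s}\ads\to H_{-r}$ such that $V_{0-}=\range\pmat{I\\X_{0-}}$ and $V_{0+}=\range\pmat{Y_{0+}\\I}$. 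What remains is to show that these operators are bounded.

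Next I would verify that $P_{0-}-Q_{0-}$ is compact. Since $A$ has a compact resolvent and $r+s<1$, Lemma~\ref{lem:ham-cptresolv} gives that $T_0$ has a compact resolvent, and the same reasoning applied to the block-diagonal $S_0$ shows that $(S_0-\lambda)^{-1}$ is compact as well. Combining the integral formula for $P_{0+}-P_{0-}$ from Theorem~\ref{theo:sectdichot} with the formula for $Q_{0+}-Q_{0-}$ from Lemma~\ref{lem:SQpm}, and using $Q_{0+}-Q_{0-}-(P_{0+}-P_{0-})=2(P_{0-}-Q_{0-})$, I obtain exactly the representation
\[
2(P_{0-}-Q_{0-})=\frac{1}{\pi i}\int_{\gamma_1}\bigl((S_0-\lambda)^{-1}-(T_0-\lambda)^{-1}\bigr)d\lambda-\frac{1}{\pi i}\int_{-i\rho}^{i\rho}(T_0-\lambda)^{-1}\,d\lambda+K
\]
used in the proof of Theorem~\ref{theo:angular-acao}. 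The first integral converges in the operator norm by \eqref{eq:T0S0diffest} and is therefore a norm limit of compact operators, and the second is an integral of compact operators over a compact contour; the explicit formula for $K$ in Lemma~\ref{lem:SQpm} similarly exhibits $K$ as a norm-convergent integral of compact resolvents. Consequently $P_{0-}-Q_{0-}$ is compact, and Corollary~\ref{cor:angular} yields the desired boundedness $X_{0-}\in L(H_{-r},H_{-s}\ads)$ and $Y_{0+}\in L(H_{-s}\ads,H_{-r})$.

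The only step requiring care is the compactness of $P_{0-}-Q_{0-}$, and its whole point is that it uses only the compact resolvent of $A$ and the dichotomy of $T_0$; the approximate controllability/observability hypotheses \eqref{eq:ac-theo} and \eqref{eq:ao-theo} from Theorem~\ref{theo:angular-acao} entered only in Proposition~\ref{prop:graph-acao} to secure the graph structure, and here that role is taken over by Proposition~\ref{prop:graph-stable}. Thus no new ingredient beyond what is already available is needed.
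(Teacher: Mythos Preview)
Your proposal is correct and follows exactly the paper's approach: the paper's proof simply states that $P_{0-}-Q_{0-}$ is compact ``as in the previous theorem'' and then invokes Proposition~\ref{prop:graph-stable} and Corollary~\ref{cor:angular}. You have faithfully unpacked the compactness argument from Theorem~\ref{theo:angular-acao} and correctly noted that the graph structure now comes from Proposition~\ref{prop:graph-stable} rather than Proposition~\ref{prop:graph-acao}.
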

\begin{proof}
  As in the previous theorem we obtain that $P_{0-}-Q_{0-}$ is compact.
  Hence Proposition~\ref{prop:graph-stable} and Corollary~\ref{cor:angular}
  complete the proof.
\end{proof}

Next we investigate the graph  properties of the invariant subspaces $V_\pm$
of $T$. We know that $V_\pm=\range(P_\pm)$ where $P_\pm$ are the closed
projections on $V$ given by $P_\pm=TL_\pm$ with $L_\pm\in L(V)$,
\[L_\pm=\frac{\pm 1}{2\pi i}\int_{\pm h-i\infty}^{\pm h+i\infty}
\frac{1}{\lambda}(T-\lambda)^{-1}\,d\lambda.\]
In particular $L_\pm$ are the restrictions of $L_{0\pm}$ to $V$.
Since $V_\pm=\ker L_\mp$ and
$\ker L_\mp=\ker L_{0\mp}\cap V$ it follows that
\begin{equation}\label{eq:V0pm-Vpm-rel}
  V_\pm= V_{0\pm}\cap V.
\end{equation}
This implies that graph subspace structures of $V_{0\pm}$ are inherited by
the spaces $V_\pm$:

\begin{lemma}\label{lem:graphVpm}
  If
  \[V_{0+}=\range\pmat{I\\X_{0+}}\]
  with a closed operator
  $X_{0+}:\mdef(X_{0+})\subset H_{-r}\to H_{-s}\ads$,
  then also
  \[V_+=\range\pmat{I\\X_+}\]
  where
  $X_+:\mdef(X_+)\subset H\to H$ is closed and is the part of
  $X_{0+}$ in $H$, i.e.\ 
  \(\mdef(X_+)=\set{x\in\mdef(X_{0+})\cap H}{X_{0+}x\in H}\).
  Similarly, if
  \[V_{0+}=\range\pmat{Y_{0+}\\I}\]
  with a closed operator
  $Y_{0+}:\mdef(Y_{0+})\subset H_{-s}\ads\to H_{-r}$,
  then
  \[V_+=\range\pmat{Y_+\\I}\]
  where
  $Y_+:\mdef(Y_+)\subset H\to H$ is closed and is the part of
  $Y_{0+}$ in $H$.
  The corresponding statements hold for $V_{0-}$ and $V_-$.
\end{lemma}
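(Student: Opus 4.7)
The plan is to exploit the identity $V_\pm = V_{0\pm}\cap V$ recorded in \eqref{eq:V0pm-Vpm-rel} and unfold the definition of the graph restriction. Everything reduces to a set-theoretic computation of the intersection, together with a routine closedness argument using the continuity of the scale embeddings $H\hookrightarrow H_{-r}$ and $H\hookrightarrow H_{-s}\ads$ from Section~\ref{sec:scale}.

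First I would treat the case $V_{0+}=\range\smallmat{I\\X_{0+}}$. A pair $(x,y)\in V_0$ lies in $V_+=V_{0+}\cap V$ precisely when $x\in\mdef(X_{0+})$, $y=X_{0+}x$, and in addition $x,y\in H$. Rewriting the last two conditions, the vector lies in $V_+$ if and only if $x\in\mdef(X_{0+})\cap H$ and $X_{0+}x\in H$; but this is exactly the definition of $\mdef(X_+)$, where $X_+$ is the part of $X_{0+}$ in $H$. Hence $V_+=\range\smallmat{I\\X_+}$ as an equality of sets, and $X_+$ is well-defined as a restriction of $X_{0+}$.

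Next I would verify that $X_+$ is closed on $H$. Take a sequence $(x_n)\subset\mdef(X_+)$ with $x_n\to x$ in $H$ and $X_+x_n\to y$ in $H$. By the continuity of the embeddings $H\hookrightarrow H_{-r}$ and $H\hookrightarrow H_{-s}\ads$, these convergences persist in the larger scale spaces, so $x_n\to x$ in $H_{-r}$ and $X_{0+}x_n=X_+x_n\to y$ in $H_{-s}\ads$. Closedness of $X_{0+}$ then gives $x\in\mdef(X_{0+})$ and $X_{0+}x=y$; since $x,y\in H$ by assumption, $x\in\mdef(X_+)$ with $X_+x=y$.

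The inverse graph case $V_{0+}=\range\smallmat{Y_{0+}\\I}$ is handled symmetrically: a pair $(x,y)\in V_+$ iff $y\in\mdef(Y_{0+})$, $x=Y_{0+}y$, and $x,y\in H$, which is the definition of the part $Y_+$ of $Y_{0+}$ in $H$; closedness of $Y_+$ on $H$ follows by the same embedding argument with the roles of the two coordinates interchanged. The statements for $V_{0-}$ and $V_-$ are obtained by replacing the subscript $+$ by $-$ throughout, since $V_-=V_{0-}\cap V$ equally well. There is no real obstacle here: the only subtlety is to argue convergence in the weaker scale topologies so as to exploit the closedness of the operators $X_{0\pm}$, $Y_{0\pm}$ defined there, rather than attempting to work directly with a hypothetical closedness statement on $H$.
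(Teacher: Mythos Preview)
Your proposal is correct and follows essentially the same approach as the paper: both rely on the identity $V_\pm=V_{0\pm}\cap V$ from \eqref{eq:V0pm-Vpm-rel} together with a routine graph computation. The only minor difference is that for closedness the paper simply invokes the fact that $V_\pm$ are closed subspaces of $V$ (already known from Theorem~\ref{theo:sectdichot}), whereas you re-derive closedness of $X_+$ directly from closedness of $X_{0+}$ via the continuous embeddings; these are equivalent observations.
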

\begin{proof}
  This is immediate from \eqref{eq:V0pm-Vpm-rel} and the
  fact that $V_\pm$ are closed subspaces of $V=H\times H$.
\end{proof}

\begin{remark}\label{rem:angularVpm}
  A result analogous to Corollary~\ref{cor:angular}
  holds for the subspaces $V_\pm$ of $V$ in the case that $T$ is strictly dichotomous,
  i.e.\ if $P_\pm\in L(V)$.
  In particular if $P_--Q_-$ is compact where $Q_-=\smallmat{I&0\\0&0}\in L(V)$
  and
  \[V_-=\range\pmat{I\\X_-},\qquad V_+=\range\pmat{Y_+\\I},\]
  then $X_-,Y_+\in L(H)$.
\end{remark}

\begin{theo}\label{theo:angularVpm}
  Suppose that $A$ has a compact resolvent and that $\max\{r,s\}<\frac12$.
  \begin{enumerate}
  \item If \eqref{eq:ac-theo} and \eqref{eq:ao-theo} hold, then
    $V_\pm=\range\smallmat{I\\X_\pm}$ where $X_\pm$ are the parts of $X_{0\pm}$
    in $H$. The operators $X_\pm$ are injective and satisfy $X_-,X_+^{-1}\in L(H)$.
  \item If $A$ is sectorial and $0\in\varrho(A)$, then
    $V_-=\range\smallmat{I\\X_-}$, $V_+=\range\smallmat{Y_+\\I}$
    where $X_-$ and $Y_+$ are the parts of $X_{0-}$ and $Y_{0+}$ in $H$,
   respectively, and $X_-,Y_+\in L(H)$.
  \end{enumerate}
\end{theo}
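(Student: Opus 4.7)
The plan is to replicate, on $V=H\times H$, the strategy used to prove Theorems~\ref{theo:angular-acao} and~\ref{theo:angular-stable}: first establish the graph structure of $V_\pm$ by transferring it from $V_{0\pm}$, then upgrade the resulting operators to bounded ones via an angular-subspace argument that reduces to compactness of $P_--Q_-$ on $V$.

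First, since $\max\{r,s\}<\frac12$, Theorem~\ref{theo:Tdichot} tells us that $T$ is in fact bisectorial and strictly dichotomous on $V$, so $P_\pm\in L(V)$. This puts us in the regime in which Remark~\ref{rem:angularVpm} applies. Next, for case (a), Theorem~\ref{theo:angular-acao} gives $V_{0\pm}=\range\smallmat{I\\X_{0\pm}}$ with $X_{0-}$ bounded and $X_{0+}$ boundedly invertible; Lemma~\ref{lem:graphVpm} then yields $V_\pm=\range\smallmat{I\\X_\pm}$ with $X_\pm$ the part of $X_{0\pm}$ in $H$, and injectivity of $X_\pm$ is inherited from $X_{0\pm}$. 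For case (b), Theorem~\ref{theo:angular-stable} together with Lemma~\ref{lem:graphVpm} gives $V_-=\range\smallmat{I\\X_-}$ and $V_+=\range\smallmat{Y_+\\I}$ with $X_-$, $Y_+$ the parts of $X_{0-}$, $Y_{0+}$ in $H$.

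The substance of the proof is showing that the projection difference $P_--Q_-$ is a compact operator in $L(V)$; granted this, Remark~\ref{rem:angularVpm} (the $V$-analog of Corollary~\ref{cor:angular}) gives $X_-,Y_+\in L(H)$, and in case (a) also $X_+^{-1}\in L(H)$ since $X_{0+}^{-1}=Y_{0+}$ by Proposition~\ref{prop:graph-acao}. I would imitate the proof of Theorem~\ref{theo:angular-acao} line by line, but on $V$: apply Theorem~\ref{theo:sectdichot} to the almost-bisectorial (in fact bisectorial) $T$ to represent $P_+-P_-$ as a principal-value integral of $(T-\lambda)^{-1}$ along $i\R$, apply Remark~\ref{rem:SQpm} to represent $Q_+-Q_-$ as a principal-value integral of $(S-\lambda)^{-1}$ along $\gamma_1$ plus a bounded operator $K$, and subtract. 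This yields, for $v\in\mdef(T)$,
\[
2(P_--Q_-)v=\frac{1}{\pi i}\int_{\gamma_1}\bigl[(S-\lambda)^{-1}-(T-\lambda)^{-1}\bigr]d\lambda\,v
-\frac{1}{\pi i}\int_{-i\rho}^{i\rho}(T-\lambda)^{-1}d\lambda\,v+Kv,
\]
with a formally identical expression for $K$ whose compactness follows at once from compactness of the resolvent of $A$ (Lemma~\ref{lem:ham-cptresolv}, which also makes both $S$ and $T$ have compact resolvent). The identity then extends to all of $V$ by density of $\mdef(T)$, which holds because $\varrho(T_0)\ne\varnothing$ (Lemma~\ref{lem:op2}(c)).

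The main obstacle — and the reason the strict inequality $\max\{r,s\}<\frac12$ is indispensable — is the norm convergence of the first integral. Estimate~\eqref{eq:TSdiffest} in Lemma~\ref{lem:Tpert} gives the pointwise decay
\[
\|(T-\lambda)^{-1}-(S-\lambda)^{-1}\|_{L(V)}\leq\frac{c_1}{|\lambda|^{2(1-\max\{r,s\})}},
\]
and the exponent $2(1-\max\{r,s\})$ exceeds $1$ exactly when $\max\{r,s\}<\frac12$, so the integral over $\gamma_1$ converges in the operator norm of $L(V)$. Because $(T-\lambda)^{-1}$ and $(S-\lambda)^{-1}$ are compact, a norm-convergent integral of compact operators is compact, and the finite integral over $[-i\rho,i\rho]$ is also a norm-convergent integral of compact operators; combined with compactness of $K$, this shows $P_--Q_-$ is compact. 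Note that the same argument on $V_0$ would fail to improve beyond the setting of Theorems~\ref{theo:angular-acao}/\ref{theo:angular-stable}; the new ingredient is precisely the sharper decay obtained from $\max\{r,s\}<\frac12$ when passing from $V_0$ to $V$, which is the point at which the hypothesis is used.
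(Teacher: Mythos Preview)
Your proof is correct and follows essentially the same route as the paper: establish strict dichotomy of $T$ via Theorem~\ref{theo:Tdichot}, transfer the graph structure from $V_{0\pm}$ to $V_\pm$ by Lemma~\ref{lem:graphVpm}, derive the integral representation of $2(P_--Q_-)$ using Remark~\ref{rem:SQpm} and \eqref{eq:dichotint2}, exploit \eqref{eq:TSdiffest} with $\max\{r,s\}<\frac12$ to get norm convergence, and conclude compactness of $P_--Q_-$ so that Remark~\ref{rem:angularVpm} applies. The only point you might make more explicit is why the hypotheses of Theorem~\ref{theo:Tdichot} are met in case~(a): conditions \eqref{eq:ac-theo} and \eqref{eq:ao-theo} imply \eqref{eq:T0dichot-cond}, since any $x\in\ker(A-it)\cap\ker C$ satisfies $C(A-\lambda)^{-1}x=(it-\lambda)^{-1}Cx=0$ for all $\lambda\in i\R\cap\varrho(A)$.
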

\begin{proof}
  The proof is analogous to the ones of Theorem~\ref{theo:angular-acao}
  and~\ref{theo:angular-stable}, where it is shown
  that $V_{0\pm}$ are angular subspaces.
  First note that $S$ and $T$ have a compact resolvent,
  see Lemma~\ref{lem:ham-cptresolv}.
  Second, since $\max\{r,s\}<\frac12$ and since $i\R\subset\varrho(T)$ by our general
  assumption in this section, Theorem~\ref{theo:Tdichot} in conjunction with
  Lemma~\ref{lem:pointgap} implies that $T$ is strictly dichotomous.
  Consequently the projections $P_\pm$ are bounded and satisfy
  \begin{equation*}
    P_{+}v-P_{-}v=\frac{1}{\pi i}\int_{-i\infty}^{i\infty\,\prime}
    (T-\lambda)^{-1}v\,d\lambda,
    \qquad v\in\mdef(T).
  \end{equation*}
  On the other hand, for $Q_\pm\in L(V)$ given by
  $Q_-=\smallmat{I&0\\0&0}$, $Q_+=\smallmat{0&0\\0&I}$ the identity
  \[Q_{+}v-Q_{-}v=\frac{1}{\pi i}\int_{\gamma_1}'(S-\lambda)^{-1}v\,d\lambda
  +Kv, \qquad v\in V,\]
  holds with some $K\in L(V)$, see Lemma~\ref{lem:SQpm} and
  Remark~\ref{rem:SQpm}.
  Consequently
  \begin{multline*}
    2(P_{-}-Q_{-})v=
    \frac{1}{\pi i}\int_{\gamma_1}
    (S-\lambda)^{-1}-(T-\lambda)^{-1}\,d\lambda \,v\\
    -\frac{1}{\pi i}\int_{-i\rho}^{i\rho}(T-\lambda)^{-1}\,d\lambda \,v
    +Kv
  \end{multline*}
  for $v\in V$, where we have used that in view of
  $\max\{r,s\}<\frac12$ and \eqref{eq:TSdiffest} all terms on the
  right-hand side yield bounded operators from $L(V)$.
  Since the resolvents of $S$ and $T$ are compact, we conclude that
  $P_--Q_-$ is compact too.
  The assertion now follows from
  Theorems~\ref{theo:angular-acao} and~\ref{theo:angular-stable},
  Lemma~\ref{lem:graphVpm} and Remark~\ref{rem:angularVpm}.
\end{proof}

\section{Symmetries of the angular operators}
\label{sec:sym}

The aim of this section is to derive symmetry properties
for the operators $X_{0\pm}$ and $X_\pm$.
We keep our general assumptions on the Hamiltonian:
$A$ is quasi-sectorial, $r+s<1$ and $i\R\subset\varrho(T_0)$.
Hence $T_0$ is bisectorial, strictly dichotomous and
the invariant subspaces are given by
\begin{equation*}
  V_{0\pm}=\range(P_{0\pm})=\ker L_{0\mp}
\end{equation*}
where $P_{0\pm}=T_0L_{0\pm}$, $L_{0\pm}\in L(V_0,V_1)$ and
\begin{equation}\label{eq:L0pm-sym}
  \braket{L_{0+}v}{w}=-\braket{v}{L_{0-}w}, \qquad v,w\in V_0,
\end{equation}
with the extended indefinite inner product
defined in \eqref{eq:extJinner},
see Lemma~\ref{lem:L0pm}.

For a subspace $U\subset V_1$ we consider its orthogonal complement
$U^\Jorth\subset V_0$  with respect to the extended inner product:
\[U^\Jorth=\set{w\in V_0}{\braket{v}{w}=0
  \text{ for all }v\in V_1}.\]
For $\tilde U\subset V_0$ the orthogonal complement $\tilde U^\Jorth\subset V_1$
is defined analogously.
Then, as in the usual Hilbert or Krein space setting, orthogonal complements are closed
and $U^{\Jorth\Jorth}=\overline{U}$.
Let $V_{1\pm}$ be the closure of $\range(L_{0\pm})$ in $V_1$,
\begin{equation}
  V_{1\pm}=\overline{\range(L_{0\pm})}^{V_1}.
\end{equation}

\begin{lemma}\label{lem:V1pm}
  The following identities hold:
  \begin{enumerate}
  \item \(V_{1\pm}^\Jorth=V_{0\pm}\),
  \item \(V_{1\pm}=V_{0\pm}\cap V_1\).
  \end{enumerate}
\end{lemma}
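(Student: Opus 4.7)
The plan is to establish (a) directly from the identity in Lemma~\ref{lem:L0pm}, and then to deduce (b) from (a) via a bipolar-theorem argument combined with the neutrality of $V_{0\pm}$ inherited from Corollary~\ref{cor:V0pmneutral}.

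For (a), recall $V_{0\pm}=\ker L_{0\mp}$. To show $V_{0+}\subset V_{1+}^\Jorth$, take $w\in V_{0+}$ and $v=L_{0+}u$ with $u\in V_0$; then Lemma~\ref{lem:L0pm} gives $\braket{L_{0+}u}{w}=-\braket{u}{L_{0-}w}=0$. Continuity of $\braket{\cdot}{w}:V_1\to\C$ extends the vanishing to every $v\in V_{1+}=\overline{\range(L_{0+})}^{V_1}$. The minus-sign case uses the companion identity $\braket{L_{0-}u}{w}=-\braket{u}{L_{0+}w}$ obtained from Lemma~\ref{lem:L0pm} by conjugation and relabeling. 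For the reverse inclusion, if $w\in V_{1+}^\Jorth$, then $\braket{L_{0+}u}{w}=0$ for every $u\in V_0$, whence $\braket{u}{L_{0-}w}=0$ for all $u\in V_0$; non-degeneracy of the extended inner product forces $L_{0-}w=0$, i.e., $w\in V_{0+}$. The minus case is analogous.

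For (b), the inclusion $V_{1\pm}\subset V_{0\pm}\cap V_1$ is immediate: $\range(L_{0\pm})\subset V_1$ since $L_{0\pm}\in L(V_0,V_1)$ by Lemma~\ref{lem:L0pm}, and $\range(L_{0\pm})\subset V_{0\pm}$ by Lemma~\ref{lem:Lpm-range}(a), while $V_{0\pm}\cap V_1$ is closed in $V_1$ by continuity of the embedding $V_1\hookrightarrow V_0$. For the reverse inclusion, I would combine (a) with a bipolar identity: since the pivot-space identifications $H_{-r}\cong(H_r)'$ and $H_{-s}\ads\cong(H_s\ads)'$ from Section~\ref{sec:scale} turn $\braket{\cdot}{\cdot}:V_1\times V_0\to\C$ into a duality that realizes $V_0$ as $V_1'$, Hahn-Banach separation gives $U^{\Jorth\Jorth}=U$ for every $V_1$-closed subspace $U\subset V_1$; applied to $U=V_{1\pm}$ and using (a), this yields $V_{1\pm}=V_{1\pm}^{\Jorth\Jorth}=V_{0\pm}^\Jorth$. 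It then suffices to show $V_{0\pm}\cap V_1\subset V_{0\pm}^\Jorth$. For $v\in V_{0\pm}\cap V_1$, Corollary~\ref{cor:V0pmneutral} gives $\braket{v}{w}=0$ for every $w\in\range(L_{0\pm})$; since $T_0$ is densely defined and strictly dichotomous, Lemma~\ref{lem:Lpm-range}(b) yields $\overline{\range(L_{0\pm})}^{V_0}=V_{0\pm}$, and continuity of $\braket{v}{\cdot}:V_0\to\C$ (valid because $v\in V_1$) extends $\braket{v}{w}=0$ to every $w\in V_{0\pm}$.

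The main technical point is the bipolar step $V_{1\pm}^{\Jorth\Jorth}=V_{1\pm}$. It is not purely formal: it requires the map $V_0\to V_1'$, $w\mapsto\braket{\cdot}{w}$, to be \emph{surjective}, not merely injective via the non-degeneracy recorded in Section~\ref{sec:ham}. Surjectivity comes from the scale-by-scale pivot-space duality, so that $V_0=H_{-r}\times H_{-s}\ads$ represents the full (anti)dual of $V_1=H_s\ads\times H_r$ under $\braket{\cdot}{\cdot}$; only with this full duality does the Hahn-Banach separation of $v_0\notin V_{1\pm}$ from $V_{1\pm}$ produce a separating $w_0\in V_0$, which is the vector the bipolar argument needs.
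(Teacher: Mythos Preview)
Your proof is correct and follows essentially the same route as the paper's: part~(a) is identical, and for the reverse inclusion in~(b) both you and the paper show $V_{0\pm}\cap V_1\subset V_{0\pm}^\Jorth$ via the density $\overline{\range(L_{0\pm})}^{V_0}=V_{0\pm}$ from Lemma~\ref{lem:Lpm-range}(b), then invoke the bipolar identity $V_{0\pm}^\Jorth=V_{1\pm}^{\Jorth\Jorth}=V_{1\pm}$. The only differences are cosmetic: you cite Corollary~\ref{cor:V0pmneutral} where the paper re-uses~(a), and you spell out the Hahn--Banach justification of the bipolar step, which the paper simply records beforehand as the fact $U^{\Jorth\Jorth}=\overline{U}$.
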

\begin{proof}
  \begin{enumerate}
  \item
    From \eqref{eq:L0pm-sym} we get
    \[V_{0\pm}=\ker L_{0\mp}\subset\range(L_{0\pm})^\Jorth=V_{1\pm}^\Jorth.\]
    If on the other hand $w\in V_{1\pm}^\Jorth$, then
    \(\braket{v}{L_{0\mp}w}=-\braket{L_{0\pm}v}{w}=0\)
    for all $v\in V_0$.
    Since the inner product is non-degenerate, this implies $L_{0\mp}w=0$
    and thus $w\in V_{0\pm}$.
  \item
    By Lemma~\ref{lem:Lpm-range} we have $\range(L_{0\pm})\subset V_{0\pm}$.
    By the continuity of the imbedding $V_1\hookrightarrow V_0$,
    the subspace
    $V_{0\pm}\cap V_1$ is closed in $V_1$, and hence
    the inclusion from left to right follows.
    For the reverse inclusion let $v\in V_{0\pm}\cap V_1$.
    Then
    \[\braket{w}{v}=0 \qquad\text{for all }w\in V_{1\pm}\]
    by (a).
    Since $T_0$ is densely defined and strictly dichotomous,
    Lemma~\ref{lem:Lpm-range} implies
    $\overline{\range(L_{0\pm})}^{V_0}=V_{0\pm}$.
    Hence $\overline{V_{1\pm}}^{V_0}=V_{0\pm}$ and therefore
    \[\braket{w}{v}=0 \qquad\text{for all }w\in V_{0\pm}.\]
    Consequently $v\in V_{0\pm}^\Jorth=V_{1\pm}^{\Jorth\Jorth}=V_{1\pm}$.
  \end{enumerate}
\end{proof}

Let $X_1:\mdef(X_1)\subset H_s\ads\to H_r$ be a densely defined operator.
We define its adjoint with respect to the scales of Hilbert spaces
$\{H_r\}$ and $\{H_s\ads\}$ as the operator
$X_1^*:\mdef(X_1^*)\subset H_{-r}\to H_{-s}\ads$ with maximal domain
such that
\begin{equation}\label{eq:adjop-scale}
  \iprods{X_1x}{y}{r}=\iprods{x}{X_1^*y}{s}\ads, \qquad
  x\in\mdef(X_1),\,y\in\mdef(X_1^*).
\end{equation}
Then $X_1^*$ is uniquely determined and closed.

\begin{lemma}\label{lem:X10-prop}
  If $V_{0-}=\range\smallmat{I\\X_{0-}}$ with a closed operator
  \[X_{0-}:\mdef(X_{0-})\subset H_{-r}\to H_{-s}\ads,\]
  then also $V_{1-}=\range\smallmat{I\\X_{1-}}$ with a closed operator
  \[X_{1-}:\mdef(X_{1-})\subset H_s\ads\to H_r.\]
  In this case:
  \begin{enumerate}
  \item $\mdef(X_{1-})=\set{x\in\mdef(X_{0-})\cap H_s\ads}{X_{0-}x\in H_r}$, i.e.,
    $X_{1-}$ is the part of $X_{0-}$ in the space of operators from
    $H_s\ads$ to $H_r$;
  \item $X_{1-}$ and $X_{0-}$ are densely defined and $X_{1-}^*=X_{0-}$;
  \item the set
    $\set{x\in\mdef(X_{0-})\cap H_{1-r}\ads}{X_{0-}x\in H_{1-s}}$ is a core
    for $X_{1-}$ and $X_{0-}$.
  \end{enumerate}
  Analogous statements hold for the spaces $V_{0+},V_{1+}$ and the operators
  $X_{0+},X_{1+}$.
\end{lemma}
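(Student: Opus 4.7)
The plan is to read off the existence and structure of $X_{1-}$ from the identity $V_{1-}=V_{0-}\cap V_1$ in Lemma~\ref{lem:V1pm}(b), and then to verify (a)--(c) by combining Lemma~\ref{lem:Lpm-range}(b) with the orthogonality $V_{1-}^\Jorth=V_{0-}$ from Lemma~\ref{lem:V1pm}(a). First, since $V_{0-}=\range\smallmat{I\\X_{0-}}$ and $V_1=H_s\ads\times H_r$, a pair $(x,y)\in V_0$ lies in $V_{1-}$ exactly when $x\in\mdef(X_{0-})\cap H_s\ads$, $y=X_{0-}x$, and $X_{0-}x\in H_r$. This exhibits $V_{1-}$ as the graph of the restriction $X_{1-}$ described in~(a), and the closedness of $V_{1-}$ in $V_1$ yields the closedness of $X_{1-}:\mdef(X_{1-})\subset H_s\ads\to H_r$.

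For (c), I invoke Lemma~\ref{lem:Lpm-range}(b): since $T_0$ is densely defined and strictly dichotomous, $\mdef(T_0)\cap V_{0-}=\range(L_{0-})$ and $\overline{\range(L_{0-})}^{V_0}=V_{0-}$. As $\mdef(T_0)=H_{1-r}\ads\times H_{1-s}$, the map $x\mapsto(x,X_{0-}x)$ identifies the set in (c) with $\range(L_{0-})$. Density of $\range(L_{0-})$ in $V_{0-}$ in the $V_0$-norm then gives the core property for $X_{0-}$ (whose graph norm is the $V_0$-norm restricted to $V_{0-}$); density of $\range(L_{0-})$ in $V_{1-}$ in the $V_1$-norm, which is immediate from the definition $V_{1-}=\overline{\range(L_{0-})}^{V_1}$, gives the core property for $X_{1-}$.

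For (b) I treat density and the adjoint identity separately. If $\mdef(X_{1-})$ were not dense in $H_s\ads$, then by the duality $H_{-s}\ads\cong(H_s\ads)'$ furnished by $y\mapsto\iprods{\cdot}{y}{s}\ads$ there would exist $f\in H_{-s}\ads\setminus\{0\}$ with $\iprods{x}{f}{s}\ads=0$ for every $x\in\mdef(X_{1-})$. By the formula \eqref{eq:extJinner} this says $\braket{v}{(0,f)}=0$ for all $v\in V_{1-}$, so $(0,f)\in V_{1-}^\Jorth=V_{0-}$; but $V_{0-}$ is the graph of $X_{0-}$, forcing $f=X_{0-}(0)=0$, a contradiction. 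Density of $\mdef(X_{0-})$ in $H_{-r}$ then follows from the chain of dense inclusions $\mdef(X_{1-})\subset H_s\ads\subset H_{-r}$. For the adjoint identity, I compute $V_{1-}^\Jorth$ directly from \eqref{eq:extJinner}: a pair $(\tilde x,\tilde y)\in V_0$ belongs to $V_{1-}^\Jorth$ iff $\iprods{X_{1-}x}{\tilde x}{r}=\iprods{x}{\tilde y}{s}\ads$ for every $x\in\mdef(X_{1-})$, which by the very definition of the scale-adjoint is equivalent to $\tilde x\in\mdef(X_{1-}^*)$ and $X_{1-}^*\tilde x=\tilde y$. Hence $V_{1-}^\Jorth=\range\smallmat{I\\X_{1-}^*}$, and comparing with $V_{1-}^\Jorth=V_{0-}=\range\smallmat{I\\X_{0-}}$ gives $X_{1-}^*=X_{0-}$.

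The statements for $V_{0+},V_{1+},X_{0+},X_{1+}$ are entirely analogous, using the symmetry relation \eqref{eq:L0pm-sym} from Lemma~\ref{lem:L0pm} to swap the roles of $L_{0+}$ and $L_{0-}$ throughout. The only slightly delicate step is the density of $X_{1-}$; once the non-degeneracy of the extended inner product on $V_1\times V_0$ (noted just after \eqref{eq:extJinner}) and the orthogonality $V_{1-}^\Jorth=V_{0-}$ are in hand, however, the argument is the one-line duality above.
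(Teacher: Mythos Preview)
Your proof is correct and follows essentially the same approach as the paper: both use $V_{1-}=V_{0-}\cap V_1$ from Lemma~\ref{lem:V1pm}(b) to obtain (a), the orthogonality $V_{1-}^\Jorth=V_{0-}$ from Lemma~\ref{lem:V1pm}(a) for the density and adjoint statements in (b), and the identification of the set in (c) with $\range(L_{0-})=V_{0-}\cap\mdef(T_0)$ via Lemma~\ref{lem:Lpm-range}(b). The only cosmetic differences are that you treat (c) before (b) and identify $V_{1-}^\Jorth$ with the graph of $X_{1-}^*$ in one step, whereas the paper verifies the two inclusions $X_{0-}\subset X_{1-}^*$ and $X_{1-}^*\subset X_{0-}$ separately.
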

\begin{proof}
  The inclusion $V_{1-}\subset V_{0-}$ implies that
  if $V_{0-}$ is a graph, then so is $V_{1-}$ and that $X_{1-}$ is a restriction
  of $X_{0-}$.
  $X_{1-}$ is closed since $V_{1-}$ is closed in $V_1=H_s\ads\times H_r$.
  (a) is now immediate from $V_{1-}=V_{0-}\cap V_1$.

  To show (b),
  suppose $x\in H_{-r}$, $y\in H_{-s}\ads$ are such that
  \begin{equation}\label{eq:X1-adj}
    \iprods{X_{1-}u}{x}{r}=\iprods{u}{y}{s}\ads\qquad
  \text{for all }u\in\mdef(X_{1-}).
  \end{equation}
  Then
  \[\Braket{\pmat{u\\X_{1-}u}}{\pmat{x\\y}}=0, \qquad u\in\mdef(X_{1-}),\]
  i.e., $\smallmat{x\\y}\in V_{1-}^\Jorth=V_{0-}$
  and thus $x\in\mdef(X_{0-})$, $X_{0-}x=y$.
  This implies that $\mdef(X_{1-})$ is dense in $H_s\ads$.
  Indeed if $y\in H_{-s}\ads$ with $\iprods{u}{y}{s}=0$ for all
  $u\in\mdef(X_{1-})$, then \eqref{eq:X1-adj} holds with $x=0$ and
  it follows that $y=0$.
  On the other hand $V_{1-}^\Jorth=V_{0-}$ implies
  \[i\iprods{u}{X_{0-}x}{s}\ads-i\iprods{X_{1-}u}{x}{r}=
  \Braket{\pmat{u\\X_{1-}u}}{\pmat{x\\X_{0-}x}}=0\]
  for all $u\in\mdef(X_{1-})$, $x\in\mdef(X_{0-})$
  and therefore $X_{0-}\subset X_{1-}^*$.
  Moreover if $x\in \mdef(X_{1-}^*)$ and $y=X_{1-}^*x$, then $x,\,y$  satisfy
  \eqref{eq:X1-adj} and we obtain $x\in\mdef(X_{0-})$.
  Consequently $X_{0-}=X_{1-}^*$.
  Finally $X_{0-}$ is densely defined since $\mdef(X_{1-})$ is dense
  in $H_s\ads$
  and the imbedding $H_s\ads\hookrightarrow H_{-r}$ is continuous
  and dense.

  Finally (c) follows from the equivalence
  \begin{align*}
    u\in\mdef(X_{0-})\cap H_{1-r}\ads \,\wedge\, X_{0-}u\in H_{1-s}
    \quad\Longleftrightarrow\quad \pmat{u\\X_{0-}u}\in V_{0-}\cap\mdef(T_0)
  \end{align*}
  in conjunction with
  $\range(L_{0-})=V_{0-}\cap\mdef(T_0)$,
  $V_{0-}=\overline{\range(L_{0-})}^{V_0}$,
  see Lemma~\ref{lem:Lpm-range}, and
  $V_{1-}=\overline{\range(L_{0-})}^{V_1}$.
\end{proof}

\begin{remark}
  The previous lemma implies $X_{1\pm}\subset X_{0\pm}=X_{1\pm}^*$.
  From this identity and \eqref{eq:adjop-scale} we obtain
  \[\iprod{X_{1\pm}x}{y}=\iprod{x}{X_{1\pm}y},\qquad x,y\in\mdef(X_{1\pm}).\]
  Consequently, if we consider $X_{1\pm}$ as an unbounded operator
  on $H$, then it is densely defined and symmetric and hence closable.
  The corresponding closure will be determined in Lemma~\ref{lem:XM-prop}.
\end{remark}

Now we turn to the symmetry properties of the operators $X_\pm$.
To this end, we look at the subspaces
\begin{equation}\label{eq:Mpm}
  M_\pm=\overline{\range(L_\pm)}^V
\end{equation}
of $V$.
By Lemma~\ref{lem:Lpm-range} we have $M_\pm\subset V_\pm$
and this inclusion may be strict.
The next lemma shows that $M_\pm^\Jorth$
coincides with $V_\pm$.
Note here that since $M_\pm\subset V$,
$M_\pm^\Jorth$ is the orthogonal complement with respect to the
inner product $\braket{\cdot}{\cdot}$ \emph{in $V$},
i.e.\ $M_\pm^\Jorth\subset V$
in the usual Krein space sense.

\begin{lemma}\label{lem:Mpm}
  The following identities hold:
  \begin{enumerate}
  \item $V_{1\pm}\subset M_\pm$ and $\overline{V_{1\pm}}^V=M_\pm$;
  \item $M_\pm^\Jorth=V_\pm$.
  \end{enumerate}
\end{lemma}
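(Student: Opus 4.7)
The plan is to prove (a) first, since it provides the crucial bridge $V_{1\pm}\subset M_\pm$ needed for (b), and then to use (a) together with the orthogonality relation $V_{1\pm}^\Jorth=V_{0\pm}$ from Lemma~\ref{lem:V1pm} to deduce (b).

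For (a), the central observation is that $L_\pm$ is the restriction of $L_{0\pm}\in L(V_0,V_1)$ to the subspace $V$, which is dense in $V_0$. First I would establish $V_{1\pm}\subset M_\pm$ as follows: given any $L_{0\pm}v$ with $v\in V_0$, pick $v_n\in V$ with $v_n\to v$ in $V_0$; the continuity of $L_{0\pm}\colon V_0\to V_1$ then gives $L_\pm v_n=L_{0\pm}v_n\to L_{0\pm}v$ in $V_1$, hence $\range(L_{0\pm})\subset\overline{\range(L_\pm)}^{V_1}$. Taking the $V_1$-closure yields $V_{1\pm}\subset\overline{\range(L_\pm)}^{V_1}$, and since the imbedding $V_1\hookrightarrow V$ is continuous, $\overline{\range(L_\pm)}^{V_1}\subset\overline{\range(L_\pm)}^V=M_\pm$. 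The second identity $\overline{V_{1\pm}}^V=M_\pm$ then follows easily: one inclusion uses that $M_\pm$ is $V$-closed; the reverse uses $\range(L_\pm)\subset\range(L_{0\pm})\subset V_{1\pm}$, which upon taking $V$-closures gives $M_\pm\subset\overline{V_{1\pm}}^V$.

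For (b), the inclusion $V_\pm\subset M_\pm^\Jorth$ is obtained from Corollary~\ref{cor:V0pmneutral}: if $v\in V_\pm\subset V_{0\pm}$ and $w\in\range(L_\pm)\subset\range(L_{0\pm})$, then $\braket{v}{w}=0$, and this extends by continuity of $\braket{\cdot}{\cdot}$ on $V\times V$ to all $w\in M_\pm$. For the reverse inclusion, take $v\in M_\pm^\Jorth\subset V$. By part (a), $V_{1\pm}\subset M_\pm$, so $\braket{v}{w}=0$ for every $w\in V_{1\pm}$; the conjugate symmetry of the extended inner product yields $\braket{w}{v}=0$ for every $w\in V_{1\pm}$. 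Interpreting $v\in V\subset V_0$, Lemma~\ref{lem:V1pm}(a) gives $v\in V_{1\pm}^\Jorth=V_{0\pm}$. Combined with $v\in V$ and $V_\pm=V_{0\pm}\cap V$ (from \eqref{eq:V0pm-Vpm-rel}), we conclude $v\in V_\pm$.

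The only technical subtlety is keeping the three inner-product incarnations straight—the Krein space pairing on $V\times V$, the $V_1\times V_0$ extension, and its $V_0\times V_1$ adjoint—but since they agree on common arguments and are related by conjugate symmetry, no genuine obstacle arises. The proof is therefore a careful but essentially formal combination of Lemma~\ref{lem:V1pm}, Corollary~\ref{cor:V0pmneutral}, and the density of $V\hookrightarrow V_0$.
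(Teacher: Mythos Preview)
Your argument is correct. Part (a) is essentially the paper's proof, with the minor simplification of approximating from the dense subspace $V\subset V_0$ rather than from $\mdef(T_0)\subset V_0$; either works since $L_{0\pm}\in L(V_0,V_1)$.

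For part (b) your route differs from the paper's. The paper argues directly on $V$: restricting the identity $\braket{L_{0+}v}{w}=-\braket{v}{L_{0-}w}$ to $v,w\in V$ gives $\braket{L_+v}{w}=-\braket{v}{L_-w}$, and then the proof of Lemma~\ref{lem:V1pm}(a) is repeated verbatim with $L_\pm$, $V_\pm$, $M_\pm$ in place of $L_{0\pm}$, $V_{0\pm}$, $V_{1\pm}$, using that the Krein inner product on $V$ is non-degenerate to conclude $L_\mp w=0$ from $\braket{v}{L_\mp w}=0$ for all $v\in V$. Your reverse inclusion instead climbs up to $V_0$: you use part (a) to embed $V_{1\pm}$ in $M_\pm$, invoke the already established identity $V_{1\pm}^\Jorth=V_{0\pm}$, and then descend via $V_\pm=V_{0\pm}\cap V$. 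The paper's approach is more self-contained and avoids any reference to (a) or to \eqref{eq:V0pm-Vpm-rel}; your approach reuses existing results more aggressively and shows explicitly how the $V$-level statement is inherited from the $V_0$-level one. Both are equally short and valid.
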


\begin{proof}
  \begin{enumerate}
  \item
    Since $\mdef(T_0)$ is dense in $V_0$ and $L_{0\pm}\in L(V_0,V_1)$,
    we have
    \[V_{1\pm}=\overline{\range(L_{0\pm})}^{V_1}
    \subset \overline{L_{0\pm}(\mdef(T_0))}^{V_1}
    \subset \overline{L_{0\pm}(\mdef(T_0))}^{V}\subset
    \overline{L_{0\pm}(V)}^V=M_\pm.\]
    On the other hand $\range(L_\pm)\subset \range(L_{0\pm})\subset V_{1\pm}$,
    which implies $M_\pm\subset\overline{V_{1\pm}}^V$
    and thus equality.
  \item
    Lemma~\ref{lem:L0pm} implies
    $\braket{L_+v}{w}=-\braket{v}{L_-w}$ for all $v,w\in V$.
    Using this and the definitions of $V_\pm$ and $M_\pm$,
    the proof is completely analogous to Lemma~\ref{lem:V1pm}(a).
  \end{enumerate}  
\end{proof}


\begin{lemma}\label{lem:XM-prop}
  Suppose $V_{0-}$ is a graph subspace $V_{0-}=\range\smallmat{I\\X_{0-}}$.
  Then $V_-=\range\smallmat{I\\X_-}$ and $M_-=\range\smallmat{I\\X_{M-}}$
  where $X_-,X_{M-}$ are closed operators on $H$.
  Moreover
  \begin{enumerate}
  \item $X_{M-}\subset X_-$,
  \item $X_-$ is the part of $X_{0-}$ in $H$,
  \item $X_{M-}$ is the closure of $X_{1-}$ when considered as an operator on $H$,
  \item $\bigset{x\in\mdef(X_{0-})\cap H_{1-r}\ads}{X_{0-}x\in H_{1-s}}$
    is a core for $X_{M-}$,
  \item $X_{M-}$ and $X_-$ are densely defined and $X_{M-}^*=X_-$.
    In particular $X_{M-}$ is symmetric.
  \end{enumerate}
  Again, analogous statements hold for $V_{0+}$, $V_+$ and $M_+$ and the respective
  operators.
\end{lemma}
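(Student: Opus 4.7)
The plan is to bootstrap from the already established graph structure of $V_{0-}$ through the chain $V_{1-}\subset M_-\subset V_-\subset V_{0-}$. The identification $V_-=\range\smallmat{I\\X_-}$ with $X_-$ the part of $X_{0-}$ in $H$ is immediate from Lemma~\ref{lem:graphVpm}, which gives the $V_-$-assertion and~(b). Since $M_-\subset V_-$ is closed in $V$, it is automatically the graph of some closed operator $X_{M-}$ on $H$ that is a restriction of $X_-$; this gives the $M_-$-assertion and~(a).

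For~(c), I invoke Lemma~\ref{lem:Mpm}(a), which says $M_-=\overline{V_{1-}}^V$. Because $V_{1-}=\range\smallmat{I\\X_{1-}}$ by Lemma~\ref{lem:X10-prop}, closing this graph in $V=H\times H$ produces precisely the graph of the closure of $X_{1-}$ regarded as an unbounded operator on $H$ (closability as an operator on $H$ being guaranteed by the symmetry noted in the remark after Lemma~\ref{lem:X10-prop}). Hence $X_{M-}=\overline{X_{1-}}$ on $H$. Part~(d) then drops out: the set in question corresponds under $x\mapsto(x,X_{0-}x)$ to $V_{0-}\cap\mdef(T_0)=\range(L_{0-})$, and Lemma~\ref{lem:X10-prop}(c) tells us it is a core for $X_{1-}$ in the $V_1$-topology; continuity of the embedding $V_1\hookrightarrow V$ upgrades this to density of its graph in $V_{1-}$, hence in $M_-$, with respect to the $V$-norm, so it is a core for $X_{M-}$.

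For~(e), density of $\mdef(X_{M-})$ in $H$ is inherited from density of $\mdef(X_{1-})$ in $H_s\ads$ (Lemma~\ref{lem:X10-prop}(b)) combined with the dense continuous embedding $H_s\ads\hookrightarrow H$; since $X_{M-}\subset X_-$, density of $\mdef(X_-)$ follows. The main identity to verify is $X_{M-}^*=X_-$, which I obtain by a direct computation using $M_-^\Jorth=V_-$ from Lemma~\ref{lem:Mpm}(b). Writing $v=(x,X_{M-}x)\in M_-$ and $w=(y,z)\in V$, the definition of $\braket{\cdot}{\cdot}$ in \eqref{eq:Jinner} reduces the condition $\braket{v}{w}=0$ to $\iprod{X_{M-}x}{y}=\iprod{x}{z}$. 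Thus $w\in M_-^\Jorth$ iff $y\in\mdef(X_{M-}^*)$ with $X_{M-}^*y=z$, while $w\in V_-$ iff $y\in\mdef(X_-)$ with $X_-y=z$. Equating these, I get $X_{M-}^*=X_-$; symmetry of $X_{M-}$ is then immediate from $X_{M-}\subset X_-=X_{M-}^*$. I expect the main subtlety to be purely bookkeeping between the four spaces $V_0,V_1,V$ and the two Hilbert scales; no new analytic estimates are required, since all dichotomy and density input is already packaged in Lemmas~\ref{lem:X10-prop} and~\ref{lem:Mpm}.
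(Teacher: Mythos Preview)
Your proposal is correct and follows essentially the same route as the paper: the chain $V_{1-}\subset M_-\subset V_-\subset V_{0-}$ together with $V_-=V_{0-}\cap V$ and $M_-=\overline{V_{1-}}^V$ (Lemma~\ref{lem:Mpm}(a)) gives the graph structure and items (a)--(c); item (d) comes from Lemma~\ref{lem:X10-prop}(c) via the continuous embedding $V_1\hookrightarrow V$; and the adjoint identity in (e) is read off from $M_-^\Jorth=V_-$ (Lemma~\ref{lem:Mpm}(b)) exactly as in the proof of Lemma~\ref{lem:X10-prop}(b). Your write-up simply spells out in more detail what the paper summarises in a few lines.
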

\begin{proof}
  The first assertions up to (c) follow readily from
  $M_-\subset V_-\subset V_{0-}$,
  $V_-=V_{0-}\cap V$, $\overline{V_{1-}}^V=M_-$ and
  the closedness of $M_-$ and $V_-$ in $V$.
  (d) is a consequence of (c) and Lemma~\ref{lem:X10-prop}(c),
  and (e) follows from $M_-^\Jorth=V_-$ in an analogous way to the proof
  of Lemma~\ref{lem:X10-prop}(b).
\end{proof}

\begin{lemma}\label{lem:XM-nonneg}
  The symmetric operators $X_{M-}$ and $X_{M+}$ are nonnegative
  and nonpositive, respectively.
\end{lemma}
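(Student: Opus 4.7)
The plan is to use the indefinite inner product $\braket{\cdot}{\cdot}_\sim$ as a Lyapunov-type form along the Hamiltonian flow on $V_{0-}$ (and the backward flow on $V_{0+}$), exploiting the identity $\Real\braket{T_0v}{v}_\sim\leq 0$. For $v=(x,X_{M-}x)\in M_-$ with $X_{M-}$ symmetric, $\braket{v}{v}_\sim=2\iprod{X_{M-}x}{x}$, so nonnegativity of $X_{M-}$ reduces to showing $\braket{v}{v}_\sim\geq 0$ on $M_-$, and analogously $\leq 0$ on $M_+$.

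First I would extend $\braket{\cdot}{\cdot}_\sim$ to a sesquilinear pairing on $V_1\times V_0$ via the scale dualities, namely $\braket{(a,b)}{(x,y)}_\sim=\iprodsr{a}{y}{r}+\iprodsr{b}{x}{s}\ads$, and extend Lemma~\ref{lem:Hamsym} to all $v\in\mdef(T_0)$. Redoing that computation with the identity \eqref{eq:extAadj}, the cross terms $\iprodsr{Ax}{y}{r}$ and $\iprodsr{A^*y}{x}{s}\ads$ turn out to be complex conjugates (both being extensions of $\iprod{Ax}{y}=\iprod{x}{A^*y}$), so $\Real\braket{T_0v}{v}_\sim=-\|B^*y\|_U^2-\|Cx\|_Y^2\leq 0$ holds on $\mdef(T_0)$.

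The main obstacle will be setting up the semigroup on $V_{0-}$ and establishing exponential stability. The restriction $T_{0-}=T_0|_{V_{0-}}$ has domain $\range(L_{0-})$, dense in $V_{0-}$ by strict dichotomy and Lemma~\ref{lem:Lpm-range}. The bisectoriality estimate of $T_0$ on $\Omega_\theta$ restricts to $T_{0-}$, while strict dichotomy gives $\sup_{\Real\lambda\geq 0}\|(T_{0-}-\lambda)^{-1}\|<\infty$; a Phragm\'en--Lindel\"of argument on the sector $\{|\arg\lambda|\leq\pi/2-\theta\}$ upgrades this to $M'/|\lambda|$, so that $T_{0-}$ is sectorial in the sense of Definition~\ref{def:sectorial}. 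Since $0\in\varrho(T_{0-})$, its spectrum then lies in a closed subsector of $\C_-$ at positive distance from $i\R$, which yields exponential stability of the analytic semigroup $e^{tT_{0-}}$.

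For the flow argument, take $x$ in the core $\mathcal{C}_-=\{x\in\mdef(X_{0-})\cap H_{1-r}\ads:X_{0-}x\in H_{1-s}\}$ from Lemma~\ref{lem:XM-prop}(d); then $v=(x,X_{0-}x)\in M_-\cap\range(L_{0-})$, and $u(t)=e^{tT_{0-}}v$ stays in $\mdef(T_{0-})\subset V_1$. The function $\phi(t)=\braket{u(t)}{u(t)}_\sim$ is differentiable with $\phi'(t)=2\Real\braket{T_0u(t)}{u(t)}_\sim\leq 0$ by the extended Lemma~\ref{lem:Hamsym}. Writing $u(t)=L_{0-}T_{0-}u(t)=L_{0-}e^{tT_{0-}}T_{0-}v$ and using $L_{0-}\in L(V_0,V_1)$ together with the exponential decay in $V_0$, both $\|u(t)\|_{V_0}$ and $\|u(t)\|_{V_1}$ decay like $e^{-\omega t}$, so the bound $|\braket{u(t)}{u(t)}_\sim|\leq 2\|u(t)\|_{V_0}\|u(t)\|_{V_1}$ forces $\phi(t)\to 0$. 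Integration from $0$ to $\infty$ yields $2\iprod{X_{M-}x}{x}=\braket{v}{v}_\sim=2\int_0^\infty(\|B^*y(t)\|_U^2+\|Cx(t)\|_Y^2)\,dt\geq 0$, and graph-norm density of $\mathcal{C}_-$ in $\mdef(X_{M-})$ extends the inequality to all $x\in\mdef(X_{M-})$. For $X_{M+}$ the same argument applies with $-T_{0+}$ in place of $T_{0-}$, which likewise has spectrum in $\C_-$ and generates an exponentially stable semigroup on $V_{0+}$; the corresponding $\phi$ now satisfies $\phi'(t)=+2(\|B^*y(t)\|_U^2+\|Cx(t)\|_Y^2)\geq 0$, so $\phi(0)\leq\lim_{t\to\infty}\phi(t)=0$ and $\iprod{X_{M+}x}{x}\leq 0$.
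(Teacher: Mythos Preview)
Your argument is correct and complete, but the route is genuinely different from the paper's. The paper stays entirely on the resolvent side: it uses the representation \eqref{eq:dichotint2} to write
\[
\Real\braket{P_+v-P_-v}{v}_\sim
=\frac{1}{\pi}\int_{-\infty}^{\infty\,\prime}
\Real\braket{T(T-it)^{-1}v}{(T-it)^{-1}v}_\sim\,dt\leq 0
\]
for $v\in\mdef(T)$, invoking only the unextended dissipativity statement in Lemma~\ref{lem:Hamsym}. Specialising to $v\in\mdef(T)\cap V_\pm$ gives $\mp\braket{v}{v}_\sim\leq0$, and density in $M_\pm$ finishes the proof in a few lines. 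No semigroup, no sectoriality of $T_{0-}$, no Phragm\'en--Lindel\"of, and no extension of $\braket{\cdot}{\cdot}_\sim$ to $V_1\times V_0$ are needed.

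Your Lyapunov/semigroup approach requires more machinery (the extended pairing, sectoriality and exponential stability of $T_0|_{V_{0\pm}}$, regularity of the orbit via $L_{0-}\in L(V_0,V_1)$), but it buys something the paper's argument does not: the explicit energy identity
\[
\iprod{X_{M-}x}{x}=\int_0^\infty\bigl(\|B^*y(t)\|_U^2+\|Cx(t)\|_Y^2\bigr)\,dt,
\]
which recovers the Gramian interpretation of the Riccati solution. So the paper's proof is shorter and self-contained within the resolvent framework already established, while yours is more informative about what $X_{M-}$ actually measures.
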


\begin{proof}
  Here we employ the indefinite inner product $\braket{\cdot}{\cdot}_\sim$ defined
  in \eqref{eq:Jinner}.
  Observe that $X_{M-}$ is nonnegative, i.e., $\iprod{X_{M-}x}{x}\geq0$
  for all $x\in\mdef(X_{M-})$,
  if and only if $\braket{v}{v}_\sim\geq0$ for all $v\in M_-$.
  Likewise $\iprod{X_{M+}x}{x}\leq0$ for all $x\in\mdef(X_{M+})$
  if and only if $\braket{v}{v}_\sim\leq0$ for all $v\in M_+$.
  Consider first $v\in\mdef(T)$.
  Using \eqref{eq:dichotint2} and Lemma \ref{lem:Hamsym}, we calculate
  \begin{align*}
    \Real\braket{P_+v-P_-v}{v}_\sim
    &=\frac{1}{\pi}\int_{-\infty}^{\infty\,\prime}\Real\braket{(T-it)^{-1}v}{v}_\sim\,dt\\
    &=\frac{1}{\pi}\int_{-\infty}^{\infty\,\prime}
    \Real\braket{(T-it)^{-1}v}{(T-it)(T-it)^{-1}v}_\sim\,dt\\
    &=\frac{1}{\pi}\int_{-\infty}^{\infty\,\prime}
    \Real\braket{T(T-it)^{-1}v}{(T-it)^{-1}v}_\sim\,dt\leq0.
  \end{align*}
  If now $v\in\mdef(T)\cap V_-$ then $P_+v-P_-v=-v$ and hence
  $\braket{v}{v}_\sim\geq0$.
  Since $\mdef(T)\cap V_-$ is dense in $M_-$ by Lemma~\ref{lem:Lpm-range},
  we conclude that $\braket{v}{v}_\sim\geq0$ for $v\in M_-$.
  Similarly for $v\in\mdef(T)\cap V_+$ we obtain $P_+v-P_-v=v$
  and thus $\braket{v}{v}_\sim\leq0$ for all $v\in M_+$.
\end{proof}

\begin{coroll}
  If $\max\{r,s\}<\frac12$, then $X_{M\pm}=X_\pm$.
  The operator $X_-$ is selfadjoint and nonnegative, $X_+$ is selfadjoint
  and nonpositive.
\end{coroll}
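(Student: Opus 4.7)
The plan is to use the strict dichotomy of $T$ (guaranteed when $\max\{r,s\}<\frac12$) to identify $M_\pm$ with $V_\pm$, after which the claimed selfadjointness and definiteness follow immediately from the lemmas already established.

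First I would observe that under the standing assumption $i\R\subset\varrho(T_0)$ of the section together with $r+s<1$, Lemma~\ref{lem:op2} yields $i\R\subset\varrho(T)$, so the proof of Theorem~\ref{theo:Tdichot} applies: since $\max\{r,s\}<\frac12$, $T$ is bisectorial and strictly dichotomous. In particular $T$ is densely defined and strictly dichotomous, so Lemma~\ref{lem:Lpm-range}(b) gives $\overline{\range(L_\pm)}^V=V_\pm$. By the very definition \eqref{eq:Mpm} this means $M_\pm=V_\pm$.

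Next, since $V_\pm=\range\smallmat{I\\X_\pm}$ and $M_\pm=\range\smallmat{I\\X_{M\pm}}$ by Lemma~\ref{lem:XM-prop}, the identity $M_\pm=V_\pm$ forces the graphs of $X_{M\pm}$ and $X_\pm$ to coincide, hence $X_{M\pm}=X_\pm$. Lemma~\ref{lem:XM-prop}(e) then yields
\[X_\pm=X_{M\pm}\subset X_{M\pm}^*=X_\pm,\]
so $X_\pm^*=X_\pm$, i.e.\ $X_\pm$ is selfadjoint. Finally, Lemma~\ref{lem:XM-nonneg} states that $X_{M-}$ is nonnegative and $X_{M+}$ is nonpositive, and transferring this through the identification $X_{M\pm}=X_\pm$ gives the claimed sign properties.

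There is no real obstacle here; the content has been entirely packaged into the preceding lemmas, and the only thing to verify carefully is that the standing assumptions of Section~\ref{sec:sym} together with $\max\{r,s\}<\frac12$ are strong enough to invoke Theorem~\ref{theo:Tdichot}'s conclusion about strict dichotomy of $T$ (which is exactly what the remark after Theorem~\ref{theo:T0dichot} and the proof of Theorem~\ref{theo:Tdichot} provide, since only $i\R\subset\varrho(T_0)$ and $r+s<1$ are used there).
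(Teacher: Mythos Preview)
Your proposal is correct and follows essentially the same route as the paper: use $\max\{r,s\}<\frac12$ to obtain strict dichotomy of $T$, then apply Lemma~\ref{lem:Lpm-range}(b) to conclude $M_\pm=V_\pm$ and hence $X_{M\pm}=X_\pm$. The paper's proof is more terse and leaves the selfadjointness and sign conclusions implicit (they follow at once from Lemma~\ref{lem:XM-prop}(e) and Lemma~\ref{lem:XM-nonneg}), whereas you spell these out; your care in noting that only $i\R\subset\varrho(T_0)$ and $r+s<1$ are actually used in the proof of Theorem~\ref{theo:Tdichot} is also appropriate.
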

\begin{proof}
  The assumption implies that $T$ is strictly dichotomous. Then
  $M_\pm=V_\pm$ by Lemma~\ref{lem:Lpm-range}
  and hence $X_{M\pm}=X_\pm$.
\end{proof}

\section{The Riccati equation}\label{sec:ricc}

We keep the general assumptions of the previous section.

\begin{lemma}
  Suppose $X_0\in L(H_{-r},H_{-s}\ads)$ is such that its graph subspace
  $U=\range\smallmat{I\\X_0}$ is $T_0$- and $(T_0-\lambda)^{-1}$-invariant.
  Consider
  the isomorphism $\varphi:H_{-r}\to U$, $x\mapsto\smallmat{x\\X_0x}$.
  Then
  \begin{enumerate}
  \item $X_0(H_{1-r}\ads)\subset H_{1-s}$;
  \item $(A-BB^*X_0)x=\varphi^{-1}T_0|_U\varphi x$
    \; for all $x\in H_{1-r}\ads$;    
  \item $A^*X_0x+X_0Ax-X_0BB^*X_0x+C^*Cx=0$ \; for all $x\in H_{1-r}\ads$.
  \end{enumerate}
\end{lemma}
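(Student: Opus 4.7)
My plan is to analyze the conjugate operator $A_U := \varphi^{-1} T_0|_U \varphi$ on $H_{-r}$, whose natural domain is $\mdef(A_U) = \varphi^{-1}(U \cap \mdef(T_0)) = \{x \in H_{1-r}\ads : X_0 x \in H_{1-s}\}$. For $x \in \mdef(A_U)$ the element $\smallmat{x\\X_0 x}$ lies in $U \cap \mdef(T_0)$, so by $T_0$-invariance of $U$ the vector
\[T_0\smallmat{x\\X_0 x} = \smallmat{Ax - BB^*X_0 x\\-C^*Cx - A^*X_0 x}\]
must equal $\smallmat{z\\X_0 z}$ for some $z \in H_{-r}$. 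This simultaneously forces $z = A_U x = Ax - BB^*X_0 x$, which gives (b) on $\mdef(A_U)$, and the Riccati identity $-C^*Cx - A^*X_0 x = X_0(Ax - BB^*X_0 x)$, which gives (c) on $\mdef(A_U)$. The lemma therefore reduces to establishing (a), namely $\mdef(A_U) = H_{1-r}\ads$.

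For (a) I would use the $(T_0-\lambda)^{-1}$-invariance: for $\lambda \in \varrho(T_0)$ the conjugate $\varphi^{-1}(T_0-\lambda)^{-1}|_U\varphi \in L(H_{-r})$ coincides with $(A_U-\lambda)^{-1}$, so $\lambda \in \varrho(A_U)$. Expanding $A_U - \lambda = (A - \lambda) - BB^* X_0$ on $\mdef(A_U)$ and inverting yields the resolvent identity
\[(A_U-\lambda)^{-1} = (A-\lambda)^{-1}(I + K_\lambda), \qquad K_\lambda := BB^*X_0(A_U-\lambda)^{-1},\]
as operators $H_{-r} \to H_{1-r}\ads$. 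Since $(A-\lambda)^{-1}: H_{-r} \to H_{1-r}\ads$ is an isomorphism for $\lambda \in \varrho(A)$, the identity $\mdef(A_U) = \range((A_U-\lambda)^{-1}) = H_{1-r}\ads$ becomes equivalent to surjectivity of $I + K_\lambda$ on $H_{-r}$ for some $\lambda \in \varrho(T_0) \cap \varrho(A)$.

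The main obstacle is this surjectivity, which I would handle by a Neumann series argument based on the interpolation-based resolvent estimates from Section~\ref{sec:bisecham}. Combining Corollary~\ref{cor:sectinterp} with the perturbation step inside the proof of Lemma~\ref{lem:T0pert} gives $\|(T_0-\lambda)^{-1}\|_{L(V_0,V_1)} \leq 2 M_2 / |\lambda|^{1-r-s}$ for $\lambda \in \Omega_\theta$ with $|\lambda|$ large. Since $x \mapsto X_0(A_U-\lambda)^{-1}x$ is the second component of $(T_0-\lambda)^{-1}\varphi x$, it takes values in the $H_r$-factor of $V_1 = H_s\ads \times H_r$, with norm bound $\|X_0(A_U-\lambda)^{-1}\|_{H_{-r} \to H_r} = O(|\lambda|^{-(1-r-s)})$. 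Composing with $BB^* \in L(H_r, H_{-r})$ yields $\|K_\lambda\|_{L(H_{-r})} = O(|\lambda|^{r+s-1})$, which tends to $0$ as $|\lambda| \to \infty$ in $\Omega_\theta$ since $r+s<1$. For all such $\lambda$ with $\|K_\lambda\| < 1$, the operator $I + K_\lambda$ is invertible by Neumann series; this yields (a), and parts (b) and (c) then follow by applying the first-paragraph computation to every $x \in H_{1-r}\ads$.
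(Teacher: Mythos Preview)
Your argument is correct and follows essentially the same route as the paper. The paper introduces the auxiliary operator $A_0=\proj_1 S_0\varphi$ (a restriction of $A$ to $\mdef(A_U)$) and factors $A_0-it=(I+(A_0-F)(F-it)^{-1})(F-it)$, but the operator $(A_0-F)(F-it)^{-1}=-\proj_1 R(T_0-it)^{-1}\varphi$ is exactly your $K_{it}$, and both proofs then invoke the same decay $\|(T_0-\lambda)^{-1}\|_{L(V_0,V_1)}=O(|\lambda|^{-(1-r-s)})$ to run a Neumann series; the only cosmetic difference is that the paper concludes via ``$A_0\subset A$ with common resolvent point implies equal domains'' whereas you read off $\range((A_U-\lambda)^{-1})=H_{1-r}\ads$ directly from the factorisation through $(A-\lambda)^{-1}$.
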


\begin{proof}
  First note that $\varphi$ is indeed an isomorphism
  between $H_{-r}$ and $U$ since $X_0$ is bounded.
  The inverse is $\varphi^{-1}=\proj_1|_U$ where
  \[\proj_1:V_0=H_{-r}\times H_{-s}\ads\to H_{-r}\]
  denotes the projection onto the first component.
  Recall the decomposition $T_0=S_0+R$ from \eqref{eq:T0decomp}
  and consider the two operators
  $F=\varphi^{-1}T_0|_U\varphi$ and $A_0=\proj_1 S_0\varphi$,
  both understood as unbounded operators on $H_{-r}$.
  Since $\mdef(T_0)=\mdef(S_0)=H_{1-r}\ads\times H_{1-s}$,
  their domains are
  \[\mdef(F)=\mdef(A_0)=\set{x\in H_{1-r}\ads}{X_0x\in H_{1-s}}.\]
  Moreover
  \[A_0x=Ax\qquad\text{for }x\in\mdef(A_0),\]
  i.e., $A_0$ is a restriction of $A$ when $A$ is considered as an operator on $H_{-r}$
  with $\mdef(A)=H_{1-r}\ads$.
  Since $\varphi$ is an isomorphism we get  $\varrho(F)=\varrho(T_0|_U)$.
  Also $\varrho(T_0)\subset\varrho(T_0|_U)$ by the invariance of $U$.
  Therefore $i\R\subset\varrho(F)$. For $t\in\R$ we compute
  \begin{align*}
    (A_0-F)(F-it)^{-1}
    &=(\proj_1 S_0\varphi-\varphi^{-1}T_0\varphi)(\varphi^{-1}T_0\varphi-it)^{-1}\\
    &=\proj_1(S_0-T_0)\varphi\varphi^{-1}(T_0-it)^{-1}\varphi
    =-\proj_1 R(T_0-it)^{-1}\varphi.
  \end{align*}
  From \eqref{eq:T0est-V0V1} in the proof of Lemma~\ref{lem:L0pm} we know that
  $\|(T_0-it)^{-1}\|_{L(V_0,V_1)}\to0$ as $t\to\infty$, and we conclude that
  $\|(A_0-F)(F-it)^{-1}\|<1$ for $t>0$ sufficiently large.
  Now
  \[A_0-it=F-it+A_0-F=\left(I+(A_0-F)(F-it)^{-1}\right)(F-it),\]
  which implies that $it\in\varrho(A_0)$.
  Since also $it\in\varrho(A)$ for large $t$ and $A_0\subset A$, it follows that
  in fact
  \[\mdef(A_0)=\mdef(A)=H_{1-r}\ads.\]
  Consequently
  $X_0(H_{1-r}\ads)\subset H_{1-s}$.
  Since $Fx=Ax-BB^*X_0x$ for $x\in\mdef(F)=\mdef(A_0)$,
  (b) is now clear.
  To show (c) let $x\in H_{1-r}\ads$. Then $X_0x\in H_{1-s}$ and $\varphi x\in\mdef(T_0)$.
  By the invariance of $U$ there exists $y\in H_{1-r}\ads$ such that
  $T_0\varphi x=\varphi y$, i.e.,
  \[\pmat{A&-BB^*\\-C^*C&-A^*}\pmat{x\\X_0x}=
  \pmat{y\\X_0y}\]
  and thus
  \[X_0Ax-X_0BB^*X_0x=X_0(Ax-BB^*X_0x)=X_0y=-C^*Cx-A^*X_0x.\]
\end{proof}

\begin{coroll}\label{cor:closed-loop}
  If $V_{0-}=\range\smallmat{I\\X_{0-}}$ with a bounded operator
  $X_{0-}\in L(H_{-r},H_{-s}\ads)$, then
  $X_{0-}(H_{1-r}\ads)\subset H_{1-s}$,
  the Riccati equation
  \[A^*X_{0-}x+X_{0-}Ax-X_{0-}BB^*X_{0-}x+C^*Cx=0,\qquad x\in H_{1-r}\ads,\]
  holds, and 
  $A-BB^*X_{0-}$
  considered as an unbounded operator on $H_{-r}$
  is sectorial 
  with spectrum $\sigma(A-BB^*X_{0-})\subset \C_-$.
  In particular, it generates an exponentially stable analytic semigroup
  on $H_{-r}$.
\end{coroll}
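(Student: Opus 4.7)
My plan is to obtain the bulk of the statement as a direct application of the preceding lemma with $X_0=X_{0-}$. Its hypotheses require $V_{0-}$ to be $T_0$- and $(T_0-\lambda)^{-1}$-invariant, which is supplied by Theorem~\ref{theo:sectdichot} since $T_0$ is strictly dichotomous by Theorem~\ref{theo:T0dichot}. The lemma then yields at once the mapping property $X_{0-}(H_{1-r}\ads)\subset H_{1-s}$, the Riccati equation on $H_{1-r}\ads$, and the similarity identity $A-BB^*X_{0-}=\varphi^{-1}T_0|_{V_{0-}}\varphi$ with the Banach space isomorphism $\varphi\colon H_{-r}\to V_{0-}$, $x\mapsto(x,X_{0-}x)$. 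Hence $A-BB^*X_{0-}$ is closed and densely defined on $H_{-r}$ (its domain $H_{1-r}\ads$ is dense in $H_{-r}$) and $\sigma(A-BB^*X_{0-})=\sigma(T_0|_{V_{0-}})\subset\C_-$. The remark following Theorem~\ref{theo:T0dichot} places a strip $\{|\Real\lambda|\leq h\}$ inside $\varrho(T_0)$, so the spectrum is bounded away from $i\R$: $\sigma(A-BB^*X_{0-})\subset\{\Real z\leq-h\}$.

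The substantive work is to verify sectoriality, and by similarity it suffices to show that $T_0|_{V_{0-}}$ is sectorial on $V_{0-}$. Bisectoriality of $T_0$ combined with the boundedness of $P_{0-}$ yields $\|\lambda(T_0|_{V_{0-}}-\lambda)^{-1}\|\leq M$ on a bisector $\Omega_\theta$, while strict dichotomy (Definition~\ref{def:dichot}) supplies a uniform bound $\|(T_0|_{V_{0-}}-\lambda)^{-1}\|\leq K$ on $\C_+$. To upgrade this uniform bound to the required decay on the complementary sector $\Sigma_{\pi/2-\theta}$ around the positive real axis, I will apply the Phragm\'en--Lindel\"of principle to the $L(V_{0-})$-valued holomorphic function $f(\lambda)=\lambda(T_0|_{V_{0-}}-\lambda)^{-1}$: on the two boundary rays $\arg\lambda=\pm(\pi/2-\theta)$ the bisectorial estimate gives $\|f(\lambda)\|\leq M$, while inside $\Sigma_{\pi/2-\theta}$ only $\|f(\lambda)\|\leq K|\lambda|$ is known, i.e.\ linear growth, which is far below the admissible growth exponent $\pi/(\pi-2\theta)>1$ for a sector of opening $\pi-2\theta<\pi$. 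Phragm\'en--Lindel\"of then forces $\|f\|\leq M$ throughout $\Sigma_{\pi/2-\theta}$, and combined with the bisectorial bound this yields $\|\lambda(T_0|_{V_{0-}}-\lambda)^{-1}\|\leq M$ on all of $\Sigma_{\pi/2+\theta}$, i.e.\ sectoriality of $T_0|_{V_{0-}}$. The similarity $\varphi$ transports sectoriality (with constant multiplied by $\|\varphi\|\|\varphi^{-1}\|$) to $A-BB^*X_{0-}$.

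Since a densely defined sectorial operator generates a bounded analytic semigroup, $A-BB^*X_{0-}$ generates an analytic semigroup on $H_{-r}$, and together with the spectral gap $\{\Real z\leq-h\}$ this semigroup is exponentially stable. The main obstacle in the argument is the Phragm\'en--Lindel\"of step: the decay $M/|\lambda|$ coming from bisectoriality lives only on the bisector, and strict dichotomy alone offers a bare uniform bound on $\C_+$; converting this boundedness into the $M/|\lambda|$-decay demanded by the paper's sectoriality definition is exactly what requires the complex-analytic rigidity of a sector of opening less than $\pi$.
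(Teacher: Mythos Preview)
Your proposal is correct and follows the same route as the paper: apply the preceding lemma with $X_0=X_{0-}$ to obtain the mapping property, the Riccati equation, and the similarity $A-BB^*X_{0-}=\varphi^{-1}T_0|_{V_{0-}}\varphi$, then read off $\sigma(A-BB^*X_{0-})\subset\C_-$ from Theorem~\ref{theo:sectdichot}. The only difference is that for the sectoriality of $T_0|_{V_{0-}}$ the paper simply cites \cite[Theorem~5.6]{winklmeier-wyss15}, whereas you supply an explicit Phragm\'en--Lindel\"of argument (combining the bisectorial decay on $\Omega_\theta$ with the uniform resolvent bound on $\C_+$ from strict dichotomy); this is precisely the kind of argument behind the cited result, and the paper itself uses the same technique in the remark immediately following the corollary. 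One minor remark: the boundedness of $P_{0-}$ is not actually needed for the bisectorial estimate on $T_0|_{V_{0-}}$, since $(T_0|_{V_{0-}}-\lambda)^{-1}=(T_0-\lambda)^{-1}|_{V_{0-}}$ for $\lambda\in\varrho(T_0)$ by invariance alone.
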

\begin{proof}
  $A-BB^*X_{0-}$ is similar  to $T_0|_{V_{0-}}$ via the isomorphism $\varphi$
  from the previous lemma, $\sigma(T_0|_{V_{0-}})\subset\C_-$, and $T_0|_{V_{0-}}$
  is sectorial by \cite[Theorem~5.6]{winklmeier-wyss15}.
\end{proof}

\begin{remark}
  If $X_{0-}\in L(H_{-r},H_{-s}\ads)$ and hence
  $X_{0-}(H_{1-r}\ads)\subset H_{1-s}$,
  Lemma~\ref{lem:X10-prop} and~\ref{lem:XM-prop} imply
  that $H_{1-r}\ads\subset\mdef(X_{1-})\subset\mdef(X_{-})$.
  Since the operator $A-BB^*X_{0-}$ considered on $H_{-r}$ has domain $H_{1-r}\ads$
  we find that
  \[A-BB^*X_{0-}=A-BB^*X_{-}=A-BB^*X_{1-}.\]
  Hence the Riccati equation can be written as
  \[A^*X_{1-}x+X_{0-}Ax-X_{0-}BB^*X_{1-}x+C^*Cx=0,\qquad x\in H_{1-r}\ads,\]
  or in weak form, using $X_{0-}=X_{1-}^*$, as
  \begin{multline*}
    \iprods{X_{1-}x}{Ay}{r}+\iprodsr{Ax}{X_{1-}y}{r}
    -\iprod{B^*X_{1-}x}{B^*X_{1-}y}_U\\
    +\iprod{Cx}{Cy}_Y=0,
    \quad x,y\in H_{1-r}\ads.
  \end{multline*}
  Of course, in both Riccati equations $X_{1-}$ may be replaced
  by one of its extensions $X_{M-}$ and $X_-$.
\end{remark}
\begin{remark}
  For $X_{0-}\in L(H_{-r},H_{-s}\ads)$
  Corollary~\ref{cor:closed-loop} yields that $A-BB^*X_-$
  is sectorial \emph{when considered as an operator in $H_{-r}$.}
  On the other hand,
  we can consider the part of $A-BB^*X_-$ in $H$, which we denote
  by $(A-BB^*X_-)|_H$.
  Then $(A-BB^*X_-)|_H$ is almost sectorial:
  First note that
  \[\sigma\bigl((A-BB^*X_-)|_H\bigr)\subset\sigma(A-BB^*X_-).\]
  From $A-BB^*X_-=\varphi^{-1} T_0|_{V_{0-}}\varphi$ we obtain
  \[\|(A-BB^*X_--\lambda)^{-1}\|_{L(H_{-r},H)}\leq
  \|(T_0|_{V_{0-}}-\lambda)^{-1}\|_{L(V_{0-},V)}\|\varphi\|,\]
  and \eqref{eq:T0resolvest-V} in conjunction with
  $i\R\subset\varrho(A-BB^*X_-)$ implies
  \[\|(A-BB^*X_--\lambda)^{-1}\|_{L(H_{-r},H)}\leq \frac{c_0}{|\lambda|^{1-\max\{r,s\}}}
  \quad\text{for } \lambda\in i\R\setminus\{0\},\]
  with some constant $c_0>0$.
  Moreover since $\|(T_0|_{V_{0-}}-\lambda)^{-1}\|_{L(V_0)}$ is bounded on $\C_+$,
  $\|(T_0|_{V_{0-}}-\lambda)^{-1}\|_{L(V_0,\mdef(T_0))}$ does not grow faster than
  $|\lambda|$ on $\C_+$, where $\mdef(T_0)$ is equipped with the graph norm.
  As the imbedding $\mdef(T_0)\hookrightarrow V$ is continuous,
  $\|(A-BB^*X_--\lambda)^{-1}\|_{L(H_{-r},H)}$ does not grow faster than $|\lambda|$ on $\C_+$ too.
  The Phragm\'en-Lindel\"of theorem then implies that
  \[\|(A-BB^*X_--\lambda)^{-1}\|_{L(H_{-r},H)}\leq \frac{c_0}{|\lambda|^{1-\max\{r,s\}}}
  \quad\text{for }\lambda\in\overline{\C_+}\setminus\{0\}\]
  and hence $(A-BB^*X_-)|_H$ is almost sectorial,
  see \cite[\S5]{winklmeier-wyss15}.

  Now suppose in addition that $\max\{r,s\}<\frac12$ and that $X_-\in L(H)$,
  e.g.\ as a consequence of
  Theorem~\ref{theo:angularVpm}. Then
  \[(A-BB^*X_-)|_H=\varphi|_H^{-1}T|_{V_-}\varphi|_H\]
  where $\varphi|_H:H\to V_-$, $x\mapsto\smallmat{x\\X_-x}$ is an isomorphism.
  Since $T$ is bisectorial by Theorem~\ref{theo:Tdichot},
  $T|_{V_-}$ is sectorial by  \cite[Theorem~5.6]{winklmeier-wyss15},
  and hence $(A-BB^*X_-)|_H$ is sectorial too.
\end{remark}

\bibliographystyle{cwyss}
\bibliography{biblist}

\end{document}